\def\XXint#1#2#3{{\setbox0=\hbox{$#1{#2#3}{\int}$ }
\vcenter{\hbox{$#2#3$ }}\kern-.6\wd0}}
\newcommand \commentout[1] {}
\def\II{{\rm I\kern-0.5exI}}
\def\III{{\rm I\kern-0.5exI\kern-0.5exI}}
\newcommand{\norm}[1]{\lVert #1 \rVert}
\newcommand{\RR}{\mathbb{R}}
\newcommand{\ZZ}{\mathbb{Z}}
\DeclareMathOperator*{\essinf}{ess\, inf}
\DeclareMathOperator{\loc}{\textup{loc}}
\DeclareSymbolFont{bbold}{U}{bbold}{m}{n}
\DeclareSymbolFontAlphabet{\mathbbold}{bbold}
\newcommand{\vp}{\varphi}
\DeclareMathOperator{\sgn}{sgn}
\newcommand{\spt}{\textup{spt}}
\newcommand{\dom}{\textup{dom}}
\newcommand{\wnu}{w_{\nu}}
\newcommand{\pnu}{p_{\nu}}
\newcommand{\twnu}{\tilde{w}_{\nu}}
\newcommand{\tpnu}{\tilde{p}_{\nu}}
\newcommand{\trhonu}{\tilde{\rho}_{\nu}}
\newcommand{\rhonu}{\rho_{\nu}}
\newcommand{\dx}[1]{\mathop{}\!\mathrm{d} #1}
\numberwithin{equation}{section}
\newtheorem{theorem}{Theorem}[section]
\newtheorem{lemma}[theorem]{Lemma}
\newtheorem{prop}[theorem]{Proposition}
\newtheorem{cor}[theorem]{Corollary}
\newtheorem{assumption}[theorem]{Assumption}
\theoremstyle{remark}
\newtheorem{remark}[theorem]{Remark}
\theoremstyle{definition}
\newtheorem{definition}[theorem]{Definition}
\begin{document}
\title{On the singular limit of Brinkman's law to Darcy's law} 
\author[1]{Noemi David}
\author[2]{Matt Jacobs}
\author[3]{Inwon Kim}
\affil[1]{CNRS, LMRS, Université de Rouen}
\affil[2]{Department of Mathematics, UCSB}
\affil[3]{Department of Mathematics, UCLA}

\maketitle

\begin{abstract}
In this paper we study singular limits of congestion-averse growth models, connecting different models describing the effect of congestion. These models arise in particular in the context of tissue growth. The main ingredient of our analysis is a family of energy evolution equations and their dissipation structures, which are novel and of independent interest. This strategy allows us to consider a larger family of pressure laws as well as proving the joint limit, from a compressible Brinkman's model to the incompressible Darcy's law, where the latter is a Hele-Shaw type free boundary problem.
\end{abstract}

\medskip

 \noindent
\textit{Keywords.} Reaction-diffusion equations; Brinkman's law; Darcy's law; Inviscid limit; Incompressible limit; Energy dissipation; Tissue growth.

\section{Introduction}
 We consider the singular limit of a PDE model of congestion-averse tissue growth. This model 
 is based on \emph{Brinkman's law} with a viscosity parameter $\nu>0$.  In this setting, the tumor cell density $\rhonu$ evolves according to the equation
\begin{equation}\label{eq: rho}
\left\{\begin{split}
    \partial_t \rho_{\nu}-\nabla \cdot(\rho_{\nu} \nabla w_{\nu})&=\rho_{\nu}G(\pnu) ,\\[0.3em]
  -\nu\Delta \wnu +\wnu&=\pnu,
    \end{split}\right.
\end{equation}
in $\mathbb{R}^d\times (0,\infty)$, where $G$ is a growth term, and the pressure $\pnu$ is generated by the density and a convex function  $f_{\nu}:\RR\to\RR$, through the duality relation
\begin{equation}\label{eq: p}
    p_{\nu}\in \partial f_{\nu}(\rho_{\nu}).
    \end{equation} 
Namely, the pressure is generated to dissipate the total internal energy 
\begin{equation}\label{eq: energy}
    F_{\nu}(\rho):=\int_{\RR^d} f_{\nu} (\rho) \dx x.
    \end{equation}
    {We assume $f_{\nu}:\RR\to \RR\cup\{+\infty\}$ to be convex and monotone increasing, so that the energy penalizes concentration. We are particularly interested in functions $f_{\nu}$ that carry extremely high penalties after a critical threshold, which then become singular in the limit $\nu\to 0$. A particular example we consider is   \begin{equation}\label{pme}
    f_{\nu}(\rho) = \frac{1}{\gamma_{\nu}}\rho^{\gamma_\nu},
    \end{equation}
    where $\gamma_{\nu}\to\infty$ as $\nu\to 0$, which enforces incompressibility in the limit.    
The growth term $G$ is assumed to be non-increasing with bounded positive set in $(0,\infty)$. Our monotonicity assumptions on $f_{\nu}$ and $G$ describe congestion-averse growth: high density values yield high pressure values through \eqref{eq: p}, which then decreases $G$ and thus slows down the production of new density. We refer to Section~\ref{sec: assumptions} for a full set of assumptions on $f_{\nu}$, $G$ and the initial data $\rho_{\nu}(\cdot,0)$.

 When $\nu=0$, \eqref{eq: rho} is an evolution equation following {\it Darcy's law
 } with pressure-dependent growth:
 \begin{equation}\label{eq:limit}
 \rho_t - \nabla\cdot(\rho\nabla p) = \rho G(p), \quad p\in \partial f_0(\rho).
 \end{equation}
 In the context of tumor growth, Brinkman's law with a viscosity coefficient $\nu>0$ extends  Darcy's law  by incorporating the effect of friction between cells \cite{PV2015, DEBIEC2020, DeSc}. 

 \smallskip
 
The aim of this paper is rigorously establishing the convergence from \eqref{eq: rho} to \eqref{eq:limit} as  $\nu\to 0$, for a general family of convex functions $\{f_{\nu}\}_{\nu\geq 0}$. As $\nu\to 0$, the order of the equation degenerates, and in the particular example we consider in \eqref{pme}, the regularity of $f_{\nu}$ also degenerates as $\gamma_{\nu}\to\infty$.  Indeed, this represents a pressure law transitioning from a soft-congestion penalty (compressible) to a hard-congestion constraint (incompressible). Our aim is to produce a flexible framework, which can simulataneously handle both of the singular transitions that arise as $\nu\to 0$.

\smallskip

In essence, the central difficulty in studying the limit $\nu\to 0$ for singular $f_{\nu}$, lies in obtaining sufficient compactness properties for $\rhonu$ and $\pnu$. Indeed, for ill-behaved $f_{\nu}$, it is difficult to transfer the time regularity properties of $\rhonu$  to $\pnu$, or conversely, to transfer the spatial regularity properties of $\pnu$ to $\rhonu$. As a result, standard a priori estimates do not imply the strong compactness of either $\rhonu$ or $\pnu$.  This lack of compactness  defeated previous approaches, as one cannot verify that the nonlinear terms $\rhonu\nabla \wnu, \,  \rhonu G(\pnu)$ converge weakly to their appropriate limits $\rho\nabla p, \, \rho G(p)$.

In this paper, we overcome this challenge via a newfound family of {\it energy evolution equations (EEE) } associated to Brinkman's law, which hold for almost any reasonable convex energy of the density: see Theorem~\ref{main:eee} for further discussions of this intriguing result. Using these evolution equations, we are able to establish several new powerful estimates previously unavailable in the literature (most crucially the estimate in Proposition \ref{prop:w_gradient_control}), allowing us to pass to the limit in the nonlinear terms.

\smallskip

An important consequence of our result addresses the case \eqref{pme}, 
where the limiting energy $f_0$ is given by \begin{equation}
        \label{graph}
          f_{0}(a)= 0 \quad \hbox{ if } a\leq 1, \quad  \quad\hbox{otherwise } +\infty,
          \end{equation}
          which has seen a recent surge of interest.
{Section~\ref{sec: intro ref} describes available results and different approaches taken to study models with the choice of $f_{\nu}$ as \eqref{pme}, respectively either for $\nu>0$ and $\nu=0$ or the limit $\nu\to 0$ with fixed nonlinearity $\gamma_{\nu} \equiv \gamma >0$. Our result establishes a rigorous link between these models for the first time.}
 
 \smallskip
  
{Even though our analysis is motivated by the above specific example, we emphasize that our approach is rather general and covers a general family of  $f_{\nu}$.} We can address for instance  
    $${f_\nu(a)=-\nu (a+\ln(1-a)), \quad f'_\nu(a)=\nu \frac{a}{1-a}} \hbox{ for } 0<a<1, \quad\hbox{ otherwise } +\infty,$$
    which are alternative popular choices in congestion-averse tissue growth models \cite{hecht2017incompressible, DHV}, or the singular diffusion equations 
  {with non-differentiable internal energy} 
    that are considered in \cite{KM2021} (see Section~\ref{sec: assumptions}).

We first state a rough version of the EEE (see Proposition~\ref{prop:eee} for a full statement), which is the central ingredient of our analysis. 

\begin{theorem}\label{main:eee} 
For a large class of convex functions  $e_\nu:{\rm dom}(f_{\nu})\to \mathbb{R}$ of the density, there is a convex function $z_{\nu}:\mathbb{R}\to \mathbb{R}$ of the pressure, such that 
\begin{equation}\label{eq:eee_intro}
\partial_t e_{\nu}(\rhonu)-\nabla \cdot \big(e_{\nu}(\rhonu)\nabla \wnu\big)-\Delta z_{\nu}(\wnu)+{\mathcal{L}_z} =\rhonu e_{\nu}'(\rhonu)G(\pnu)
\end{equation}
holds in the weak sense, where 
$$
\mathcal{L}_z:= z''_{\nu}(\wnu)|\nabla \wnu|^2+\big(z'_{\nu}(\wnu)-z'_{\nu}(\pnu)\big)\frac{\wnu-\pnu}{\nu} \geq 0.
$$
\end{theorem}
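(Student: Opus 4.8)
The plan is to obtain \eqref{eq:eee_intro} by applying the chain rule (``renormalization'') to the continuity equation in \eqref{eq: rho}, then using the elliptic Brinkman relation $\nu\Delta\wnu=\wnu-\pnu$ to trade the second-order term $\Delta\wnu$ for the zeroth-order term $(\wnu-\pnu)/\nu$, and finally choosing $z_\nu$ precisely so that the leftover zeroth-order term is absorbed into $\Delta z_\nu(\wnu)$ up to the nonnegative remainder $\mathcal{L}_z$. Concretely, multiplying the first equation of \eqref{eq: rho} by $e_\nu'(\rhonu)$ and using that $\rhonu$ solves a continuity equation with velocity $-\nabla\wnu$ and source $\rhonu G(\pnu)$, the standard renormalization identity gives
\begin{equation*}
\partial_t e_\nu(\rhonu)-\nabla\cdot\big(e_\nu(\rhonu)\nabla\wnu\big)-\big(\rhonu e_\nu'(\rhonu)-e_\nu(\rhonu)\big)\Delta\wnu=\rhonu e_\nu'(\rhonu)G(\pnu).
\end{equation*}
Inserting $\Delta\wnu=(\wnu-\pnu)/\nu$ and writing, for a $C^2$ convex $z_\nu$, $\Delta z_\nu(\wnu)=z_\nu'(\wnu)(\wnu-\pnu)/\nu+z_\nu''(\wnu)|\nabla\wnu|^2$, one checks by direct cancellation that $-\Delta z_\nu(\wnu)+\mathcal{L}_z=-z_\nu'(\pnu)(\wnu-\pnu)/\nu$. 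Hence \eqref{eq:eee_intro} holds provided $z_\nu$ is chosen so that
\begin{equation}\label{eq:zrel}
z_\nu'(\pnu)=\rhonu e_\nu'(\rhonu)-e_\nu(\rhonu)\qquad\text{whenever }\pnu\in\partial f_\nu(\rhonu).
\end{equation}

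\textbf{Construction of $z_\nu$ and nonnegativity of $\mathcal{L}_z$.} The map $\rho\mapsto\rho e_\nu'(\rho)-e_\nu(\rho)$ is non-decreasing, since its derivative is $\rho e_\nu''(\rho)\ge0$ by convexity of $e_\nu$ and $\rho\ge0$; moreover $\rho$ is non-decreasing in $p$ along the graph $p\in\partial f_\nu(\rho)$ because $f_\nu$ is convex. Composing, $p\mapsto\rho e_\nu'(\rho)-e_\nu(\rho)$ defines a non-decreasing function $g_\nu$ of $p$ (choosing the monotone selection at jumps of $\partial f_\nu$), and any primitive $z_\nu$ of $g_\nu$ is convex and satisfies \eqref{eq:zrel} (read as $\rhonu e_\nu'(\rhonu)-e_\nu(\rhonu)\in\partial z_\nu(\pnu)$ in the non-smooth case). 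Convexity of $z_\nu$ gives $z_\nu''\ge0$ and, by monotonicity of $z_\nu'$, $\big(z_\nu'(\wnu)-z_\nu'(\pnu)\big)(\wnu-\pnu)\ge0$; since $\nu>0$ this yields $\mathcal{L}_z\ge0$. This computation also pins down the ``large class'' of admissible $e_\nu$: those convex functions on $\dom(f_\nu)$ for which $z_\nu$ as above, and each term appearing in \eqref{eq:eee_intro}, are locally integrable against the solution.

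\textbf{Rigorous justification.} To turn the formal manipulation into a $\calD'$ identity I would: (i) invoke the a priori bounds from the earlier sections (boundedness of $\rhonu$, $\pnu$, $\wnu$) together with elliptic regularity for $(I-\nu\Delta)\wnu=\pnu$ to get $\wnu\in W^{2,q}_{\loc}$ for every $q<\infty$, hence $\nabla\wnu$ bounded and $\Delta\wnu=(\wnu-\pnu)/\nu$ bounded; (ii) justify the chain rule for $z_\nu(\wnu)$ by mollifying the convex function $z_\nu$ and passing to the limit, legitimate because $\wnu\in W^{2,q}_{\loc}\cap C^{1,\alpha}_{\loc}$; (iii) justify the renormalization for $e_\nu(\rhonu)$ in the continuity equation via the DiPerna--Lions/Ambrosio theory, which applies since the drift $-\nabla\wnu$ lies in $W^{1,q}_{\loc}$ with bounded divergence; (iv) combine the two identities using \eqref{eq:zrel} to match the zeroth-order terms and the monotonicity structure to retain $\mathcal{L}_z\ge0$.

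\textbf{Main obstacle.} The delicate point is steps (ii)--(iii) when $f_\nu$ is singular or merely non-differentiable: then $\partial f_\nu$ is multivalued, $\rhonu$ is not a pointwise function of $\pnu$, and one must show that \eqref{eq:zrel} holds a.e.\ in the subdifferential sense and, crucially, that renormalization produces \emph{no defect measure}, so that \eqref{eq:eee_intro} is an exact identity rather than an inequality. Controlling the integrability of $e_\nu(\rhonu)\Delta\wnu$, of $z_\nu(\wnu)$, and of $z_\nu''(\wnu)|\nabla\wnu|^2$ uniformly in the mollification parameter, so that every manipulation above is valid in $\calD'$, is where the bulk of the technical effort is concentrated; this is presumably carried out in Proposition~\ref{prop:eee}.
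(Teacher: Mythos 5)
Your formal derivation is exactly the paper's own heuristic (Section~\ref{ssec:heuristics}): the renormalized continuity equation with the term $M_\nu=(e_\nu(\rhonu)-\rhonu e_\nu'(\rhonu))\Delta\wnu$, the substitution $\nu\Delta\wnu=\wnu-\pnu$, and the defining relation $z_\nu'(\pnu)=\rhonu e_\nu'(\rhonu)-e_\nu(\rhonu)$ are all identical to \eqref{eq:formal_energy_equation_unclear} and \eqref{eq:density_to_pressure_convex_transformation}, and your monotonicity argument for the convexity of $z_\nu$ is the content of Lemma~\ref{lem:z_from_e} (where the paper instead writes $z_\nu'(b)=\int_0^b f_\nu^{*\prime}S'$ and gets convexity from $z_\nu''=f_\nu^{*\prime}S'\ge 0$). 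Where you diverge is the rigorous justification. You propose DiPerna--Lions/Ambrosio renormalization plus mollification of $z_\nu$; the paper instead integrates the equation along the explicit Lagrangian flow $X_\nu$ of $-\nabla\wnu$ (well-posed because $\Delta\wnu=(\wnu-\pnu)/\nu\in L^\infty$), and resolves precisely the ``no defect measure'' worry you raise by a two-sided convexity trick: testing the time-differenced equation against a subgradient $\zeta_{\nu,\delta}\in\partial e_{\nu,\delta}(\rhonu)$ and against its forward time-shift yields the two opposite inequalities, hence equality. The singular term $z_\nu''(\wnu)|\nabla\wnu|^2$ for $z_\nu\in W^{2,1}_{\loc}$ only is then handled by a coarea-formula argument rather than by uniform integrability estimates along a mollification. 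Your route is plausible given that $-\nabla\wnu\in W^{1,q}_{\loc}$ with bounded divergence, and arguably more standard; the paper's route buys an exact identity without appealing to the renormalization literature and isolates where each bound is used.

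One point you gloss over: the ``large class'' of admissible $e_\nu$ is not merely an integrability condition. The relation $z_\nu'(\pnu)=\rhonu e_\nu'(\rhonu)-e_\nu(\rhonu)$ forces a structural compatibility between $e_\nu$ and $f_\nu$ (condition \eqref{eq:e_compatbility}: $\partial e_\nu$ must factor through $\partial f_\nu$ via a nondecreasing $S\in W^{1,1}_{\loc}$). If $f_\nu$ is affine on an interval of densities while $e_\nu$ is strictly convex there, then $a\mapsto ae_\nu'(a)-e_\nu(a)$ is non-constant on that interval although $\partial f_\nu$ is single-valued and constant, so $z_\nu'$ would be forced to jump, $z_\nu''$ would acquire an atom, and $z_\nu''(\wnu)|\nabla\wnu|^2$ would cease to be a function. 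The paper excludes this by requiring $z_\nu\in W^{2,1}_{\loc}([0,\infty))$, which is the real content of the restriction on $e_\nu$; your closing sentence about local integrability points in this direction but does not identify the mechanism.
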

The above equation can be interpreted as a continuity equation for $e_{\nu}$ along the Brinkman vector field $\nabla \wnu$, with correction terms $\Delta z_{\nu}(\wnu)$ measuring local redistribution of energy through diffusion, and $\mathcal{L}_z$ representing the local dissipation of the energy to friction. The flexibility of this dissipation structure over the choice of $e_{\nu}$, is rather surprising and holds broader interest beyond this particular setting.  {For example our energy flow approach appears to apply to systems in a mostly parallel way: we briefly sketch out the argument with the derivation of the core energy evolution equations for systems in Appendix~\ref{app: sys}.}

  \medskip
  
   We now state our convergence result established in Section~\ref{sec: strong comp}. First, we make some natural assumptions on the initial data of the approximate problems.
\begin{definition}\label{def in}
      For a given nonnegative $\rho^{\mathrm{in}}\in L^1\cap L^{\infty} (\mathbb{R}^d)$, we say that  a family of nonnegative initial data $(\rhonu^{\mathrm{in}})_{\nu>0} $, uniformly bounded in $L^1\cap L^{\infty}(\mathbb{R}^d)$, are {\it well-prepared} if 
     $\rho^{\mathrm{in}}_\nu \rightharpoonup \rho^{\mathrm{in}}$ in $L^1(\mathbb{R}^d)$,    
   
and there exists $B\geq 0$ such that $\rhonu^{\textup{in}}\leq \sup\partial f_{\nu}^*(B)$ almost everywhere.
\end{definition}
     
As we will explain in Section~\ref{sec: strategy}, several versions of Theorem~\ref{main:eee} are applied to establish the following:

\begin{theorem}\label{theorem}
    Let $f_{\nu}$ and $G$ satisfy Assumptions \ref{as: reaction}-\ref{as: energies}. For a given  $\rho^{\mathrm{in}}\in L^1\cap L^{\infty} (\mathbb{R}^d)$, let $\{\rho_\nu\}_{\nu> 0}$ be a sequence of weak solutions to \eqref{eq: rho}  with well-prepared initial data $\{\rho^{\textup{in}}_\nu\}_{\nu> 0}$. Then, there exist  $\rho\in 
L_{\mathrm{loc}}^\infty([0,\infty);L^1(\RR^d)\cap L^\infty(\RR^d)),$ 
  $p\in L^{\infty}([0,\infty)\times\RR^d)$ 
such that
    \begin{align*}
         \rhonu &\rightharpoonup \rho, \text{ weakly in } L^2_{\loc}([0,\infty); L^2(\RR^d)),\\[0.5em]
         \pnu &\to p,\text{ strongly in } L^2_{\loc}([0,\infty)\times \RR^d),\\[0.6em]
     \nabla \wnu &\to \nabla p, \text{ strongly in } L^2_{\loc}((0,\infty); L^2(\RR^d)),
    \end{align*}
 and $(\rho, p)$ is a solution to \eqref{eq:limit}  with initial data $\rho^{\textup{in}}$.  
\end{theorem}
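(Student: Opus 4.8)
The plan is to run the whole argument on the energy evolution equations (EEE) of Theorem~\ref{main:eee}: these supply, simultaneously, the quantitative a priori bounds and the strong compactness needed to pass to the limit in the nonlinear terms $\rhonu\nabla\wnu$ and $\rhonu G(\pnu)$. First I would fix the uniform-in-$\nu$ estimates. Well-preparedness together with a comparison argument (using that $G$ is non-increasing with bounded positive set) keeps $\rhonu$ bounded in $L^\infty_{\loc}([0,\infty);L^1(\RR^d))\cap L^\infty([0,\infty)\times\RR^d)$ and $\pnu$ bounded in $L^\infty([0,\infty)\times\RR^d)$. Taking $e_\nu\equiv 1$ in the EEE recovers mass control; taking $e_\nu = f_\nu$ gives the internal-energy dissipation inequality; and Proposition~\ref{prop:w_gradient_control} --- the key quantitative input, itself an EEE consequence --- gives $\int_\tau^T\int_{\RR^d}|\nabla\wnu|^2\le C(\tau,T)$ uniformly in $\nu$. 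One more instance of the EEE, chosen so that the friction part of $\mathcal{L}_z$ dominates $|\wnu-\pnu|^2/\nu$, yields $\|\wnu-\pnu\|_{L^2_{t,x}}^2 = O(\nu)$.

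Along a subsequence we then have $\rhonu\rightharpoonup\rho$ weakly in $L^2_{\loc}([0,\infty);L^2(\RR^d))$, $\pnu\rightharpoonup p$ and $\wnu\rightharpoonup w$ weakly, and $\nabla\wnu\rightharpoonup\nabla w$ weakly in $L^2_{\loc}((0,\infty);L^2(\RR^d))$. Since $\wnu-\pnu=\nu\Delta\wnu\to 0$ in $H^{-1}_{\loc}$ (as $\wnu$ is bounded in $L^2_{\loc}$ of $H^1_{\loc}$), we get $w=p$, hence $\nabla\wnu\rightharpoonup\nabla p$; and because $\|\wnu-\pnu\|_{L^2_{t,x}}\to 0$ it is enough to prove strong $L^2_{\loc}$ convergence of $\pnu$ (equivalently of $\wnu$). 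With that in hand, $G(\pnu)\to G(p)$ strongly and boundedly, so $\rhonu G(\pnu)\rightharpoonup\rho G(p)$.

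The heart of the proof --- and the main obstacle --- is this strong compactness, because $\pnu$ carries no spatial regularity and $\rhonu$ no classical temporal regularity. Here I would use the EEE a last time, now for a convex $e_\nu$ whose companion $z_\nu$ satisfies $z_\nu\circ f_\nu' = e_\nu$, so that $z_\nu(\pnu)=e_\nu(\rhonu)$ pointwise and hence $e_\nu(\rhonu)$ inherits the spatial $H^1_{\loc}$ bound of $z_\nu(\wnu)$ up to the $O(\sqrt\nu)$ error $z_\nu(\wnu)-z_\nu(\pnu)$. On the other hand the EEE reads
$$\partial_t e_\nu(\rhonu)=\nabla\cdot\big(e_\nu(\rhonu)\nabla\wnu\big)+\Delta z_\nu(\wnu)-\mathcal{L}_z+\rhonu e_\nu'(\rhonu)G(\pnu),$$
whose right-hand side is bounded in a negative Sobolev norm in time, using the $L^2_{t,x}$ bound on $e_\nu(\rhonu)\nabla\wnu$, the $H^1_x$ bound on $z_\nu(\wnu)$, the $L^1_{t,x}$ bound on $\mathcal{L}_z$ (from the integrated EEE), and the $L^\infty$ bounds. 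An Aubin--Lions--Simon argument then gives strong $L^2_{\loc}$ precompactness of $z_\nu(\wnu)$; inverting $z_\nu$ in a way uniform in $\nu$ --- the delicate point, since when $\gamma_\nu\to\infty$ both $f_\nu$ and $z_\nu$ degenerate, and this is where the structural assumptions must be used carefully --- upgrades this to $\pnu\to p$ strongly in $L^2_{\loc}([0,\infty)\times\RR^d)$, and then $\wnu\to p$ strongly as well.

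It remains to pass to the limit. Strong convergence $\nabla\wnu\to\nabla p$ in $L^2_{\loc}((0,\infty);L^2(\RR^d))$ follows from a further application of the dissipation structure: passing to the limit in the integrated EEE, weak lower semicontinuity gives $\int\!\int z_0''(p)|\nabla p|^2\le\liminf_\nu\int\!\int z_\nu''(\wnu)|\nabla\wnu|^2$, and the reverse inequality comes from matching with the energy balance of the limit problem; since $z_\nu''\ge c>0$, this convergence of weighted $L^2$ norms together with the weak convergence forces $\nabla\wnu\to\nabla p$ strongly, whence $\rhonu\nabla\wnu\rightharpoonup\rho\nabla p$. Passing to the limit in the weak formulation of \eqref{eq: rho} then yields the weak formulation of \eqref{eq:limit}. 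The constitutive relation $p\in\partial f_0(\rho)$ follows from Minty's monotonicity trick: for any $(a,b)$ on the graph of $\partial f_0$ pick, by convergence of the energies $f_\nu\to f_0$, a sequence $(a_\nu,b_\nu)$ on the graph of $\partial f_\nu$ with $a_\nu\to a$ and $b_\nu\to b$; then $\int\!\int(\rhonu-a_\nu)(\pnu-b_\nu)\phi\ge 0$ for $\phi\ge0$, and passing to the limit (using $\pnu\to p$ strongly and $\rhonu\rightharpoonup\rho$) gives $\int\!\int(\rho-a)(p-b)\phi\ge0$, i.e.\ $p\in\partial f_0(\rho)$ a.e. The initial datum is recovered from the weak-in-time continuity of $\rhonu$ provided by the equation together with $\rhonu^{\mathrm{in}}\rightharpoonup\rho^{\mathrm{in}}$, and tightness in $x$ --- needed for the $L^1$ statements and for working on all of $\RR^d$ --- is controlled using the uniform $L^1$ bound, the boundedness of the positive set of $G$, and finite-propagation estimates.
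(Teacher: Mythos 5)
Your proposal has a genuine gap at its central step, and it is precisely the step the paper identifies as the obstruction that defeats earlier approaches. You propose to obtain strong $L^2_{\loc}$ compactness of $\pnu$ \emph{before} identifying the flux, via an Aubin--Lions argument applied to $z_\nu(\pnu)=e_\nu(\rhonu)$ followed by ``inverting $z_\nu$ in a way uniform in $\nu$.'' Two problems. First, the relation $z_\nu\circ f_\nu'=e_\nu$ is not the EEE coupling relation (which is $a\,e_\nu'(a)-e_\nu(a)=z_\nu'(b)$ for $b\in\partial f_\nu(a)$), so the time-regularity supplied by the EEE attaches to $e_\nu(\rhonu)$ for one companion $z_\nu$ while your spatial $H^1$ bound attaches to a \emph{different} function of $\wnu$; imposing both relations simultaneously forces a rigid ODE on $e_\nu$ that generic choices do not satisfy. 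Second, and more fundamentally, in the joint limit \eqref{pme} with $\gamma_\nu\to\infty$ there is no admissible $z_\nu$ that is both uniformly invertible in the pressure variable and paired with an $e_\nu$ that stays bounded in the density variable: if $z_\nu$ is non-degenerate (say $z_\nu(b)=b$) then $e_\nu(a)\sim a^{\gamma_\nu-1}$ blows up and the temporal estimate degenerates, while if $e_\nu$ is tame then $z_\nu\sim f_\nu^{*\prime}$ flattens and its inverse is not uniformly continuous, so strong convergence of $z_\nu(\pnu)$ does not transfer to $\pnu$. This is exactly the ``transfer of regularity between $\rhonu$ and $\pnu$'' failure described in the introduction, and flagging it as ``the delicate point'' does not resolve it. The paper avoids this entirely: it never establishes strong compactness of $\pnu$ as a preliminary step, but instead first identifies $m=\rho\nabla p$ by writing $\rhonu\nabla\wnu-\nabla(f_\nu^*(\wnu))=(\rhonu-f_\nu^{*\prime}(\wnu))\nabla\wnu$ and killing this difference with the non-concentration estimate $\iint_K\chi_A(\wnu)|\nabla\wnu|^2\lesssim|A|$ of Proposition~\ref{prop:w_gradient_control} together with a maximal-function bound on the singular set of $f_\nu^{*\prime}$ (Proposition~\ref{prop:density_swap}); strong convergence of $\pnu$ is deduced only at the very end.

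There is also a circularity in your closing argument. You derive strong convergence of $\nabla\wnu$ by ``matching with the energy balance of the limit problem,'' but that energy balance (the analogue of \eqref{eq 1}) is only available once you know the limit satisfies $\partial_t\rho-\nabla\cdot(\rho\nabla p)=R$, i.e.\ once $m=\rho\nabla p$ is established --- which in your scheme is a \emph{consequence} of the strong convergence of $\nabla\wnu$. The paper's ordering (flux identification first, then the $H^{-1}$ dissipation comparison of Proposition~\ref{prop:nablafstar}, then Proposition~\ref{prop:strong}) is what breaks this circle. Finally, the assertion ``$z_\nu''\ge c>0$'' used to pass from convergence of the weighted dissipation to strong convergence of $\nabla\wnu$ is false for the relevant choice: the $H^{-1}$ dissipation weight is $f_\nu^{*\prime\prime}$ composed appropriately and the resulting quantity $|\nabla f_\nu^*(\wnu)|^2$ degenerates where $\wnu$ is small, which is why the paper needs the additional splitting over $\{\wnu\le\beta\}$ and a further application of Proposition~\ref{prop:w_gradient_control} in the proof of Proposition~\ref{prop:strong}.
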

 We use the standard notion of weak solutions for \eqref{eq: rho} and \eqref{eq:limit} for divergence form equations, see Definitions~\eqref{def:wk_sol_brinkman} and ~\ref{def:wk_sol_HS}.

\subsection{Singular limits in congestion models: literature}\label{sec: intro ref}
As mentioned above, the specific case of the incompressible energy - $f_0$ given in \eqref{graph} -  has gained significant interest in recent literature, for bio-medical applications (tumor growth) as well as for models of pedestrian motion (see for instance \cite{DM2016, MBSV2011, MS2016}).
{The solution of \eqref{eq:limit} with this choice of $f_0$} is usually obtained as the incompressible limit of equations of porous-medium type, namely of \eqref{eq: rho} with $\nu=0$ and pressure-density couplings given by power laws $$p=f'_\gamma(\rho)=\rho^\gamma, \quad \gamma\geq 1.$$ As $\gamma\to \infty$ the pressure converges to the incompressible graph~\eqref{graph}, and the limit equation~\eqref{eq:limit} in this case corresponds to a solution of the so-called Hele-Shaw problem. This limit has attracted a lot of interest and a vast literature is now available. The incompressible limit has also been studied for \eqref{eq: rho} with $f_{\nu}$ given as in \eqref{pme} with fixed $\nu>0$ and $\gamma\to\infty$. This limit (to the incompressible Brinkman problem) generates a free boundary problem that moves with a nonlocal dependence on the pressure. Due to the nonlocality, the pressure is less regular here than in the Hele-Shaw flow. In particular, the pressure for the incompressible Brinkman problem is not continuous across the free boundary \cite{PV2015, DEBIEC2020, DeSc}, whereas generic continuity is expected for the Hele-Shaw flow.

\medskip

In this paper, we answer two open questions in the context of these singular limits; we prove that the diagram below commutes, namely that the incompressible Brinkman problem converges to the Hele-Shaw problem, and that it is possible to pass to the joint limit and directly obtain the Hele-Shaw problem from the compressible Brinkman problem \eqref{eq: rho}.  Namely, our work fills in the red arrows, which were not previously known.

\medskip

\begin{equation*}
\begin{array}{cccc}
    &\text{Compressible Brinkman} &{\xrightarrow[]{\nu\longrightarrow 0}} &\text{Compressible Darcy}\\[1.3em]
    &\Big\downarrow{\gamma\to \infty} &{\color{red}\boldsymbol{ \gamma\to\infty\searrow \nu \to 0}}   &\Big\downarrow{\gamma\to \infty}\\[1.3em]
    &\text{Incompressible Brinkman} &{{\color{red}\boldsymbol{\xrightarrow[]{\nu\longrightarrow 0}}}} &\text{Hele-Shaw problem}
\end{array}
\end{equation*}

\bigskip

We discuss the main strategies applied to the previous studies of the three singular limits presented in the diagram above and why they cannot be applied to our problem.

\begin{itemize}
\item $\nu=0, \gamma \to \infty$: as mentioned above, this limit is the most studied one and it bridges the gap between the porous medium equation and the Hele-Shaw problem. The first result in the context of tumor growth models, \cite{PQV}, considers a porous medium equation with a pressure-dependent growth rate. 
The strategy adopted to pass to this limit involves some classical tools of the theory of the porous medium equation, such as the Aronson-B\'enilan estimate. Let us also recall that this equation guarantees the propagation of uniform $BV$-bounds, hence strong compactness of pressure and density are immediately proven. 
Subsequently, research branched out in many different directions, and models including the presence of nutrients \cite{DP21}, local and nonlocal external drifts \cite{AKY14, CKY18, KPW19, CCY2019} have been analyzed. New methods have been applied, such as extensions of the Aronson-B\'enilan estimate in $L^p$ spaces \cite{GPS}, and strategies relying on its gradient-flow structure \cite{LX2021, Dav23, Jac21, jacobs_lagrangian}.  
The latter have been particularly useful in treating multi-species systems for which strong compactness of the single species is not directly available. Here, the strong compactness of the velocity field, namely $\nabla p$, is obtained by exploiting the dissipative structure of the total density equation. Let us also remark that viscosity solutions methods, \cite{KP17, KPW19, KPS}, have also been widely applied to this problem, based on the underlying comparison principle. The same limit is also well-known in the context of crowd motion models, for which is usually referred to as \textit{hard congestion limit}, we refer the reader to \cite{AKY14, DMSV2016, MBSV2011} and references therein.   Note however, that many of these techniques strongly rely on the specific structure of Darcy's law and cannot be applied to Brinkman's law even when $\nu$ is very small.
    \item $\nu>0, \gamma\to \infty$: the incompressible limit for the Brinkman problem presents technical difficulties in showing the strong compactness of the pressure. While $w$ is regular due to the elliptic equation that it satisfies, only uniform bounds are available on pressure. 
    To show the strong compactness of the pressure, it is necessary to use a representation through Young's measures and show that the sequence does not oscillate, see \cite{PV2015}. This problem has also been studied for multi-species systems in \cite{DeSc, DEBIEC2020}, where the authors show strong compactness of the single densities through arguments \textit{\'a la Belgacem-Jabin} \cite{BJ2013, BJ2017, BJ2018}. 
 We stress that this argument relies strongly on the elliptic regularity associated to the velocity field $\nabla w$ and thus cannot be applied in the limit $\nu\to 0$.  
    \item $\gamma\geq 1, \nu\to 0$: this problem was first addressed for linear pressure laws, $\gamma=1$, in \cite{DDMS}, whose strategy was then extended in \cite{ES2023} for power pressure laws with $\gamma>1$. The important ingredient is the energy dissipation \textit{equality} associated with the limit problem. The authors then combine it with an entropy dissipation \textit{inequality} of the approximate problem, to obtain 
     strong compactness of $\nabla \wnu$.
     This strategy builds on a preliminary step that yields strong compactness of the density $\rhonu$ and the pressure $\pnu$. For fixed $\gamma$, this was achieved by the \textit{uniform} boundedness of $\partial_t \pnu$ in some negative Sobolev space.  However, the upper bound used in \cite{ES2023} blows up as $\gamma\to \infty$. Indeed, obtaining any uniform bound on $\partial_tp$ in this limit remains an elusive task. For our analysis of the joint limit, we remove this preliminary step and show that all the necessary compactness can be obtained by a family of energy evolution equalities. We will explain this key part of our analysis in the next section.
\end{itemize}

\noindent

\subsection{Strategy}
\label{sec: strategy}
Let us briefly describe the main challenge in establishing the convergence result. While the velocity field $\nabla w$ enjoys a higher regularity than $\nabla p$ for $\nu>0$, it is difficult to obtain strong compactness of the density or pressure variable as $\nu$ tends to zero from \textit{a priori} estimates, especially since the internal energy converges to the incompressible one as $\nu\to 0$. Sending $\nu\to 0$ in the weak convergence limit,  the continuity equation we obtain is of the form
$$
\partial_t\rho - \nabla \cdot m = R,
$$
where $m$ and $R$ remains to be identified.

\medskip

In Section~\ref{sec: strong comp}, we will use different versions of the EEE \eqref{eq:eee_intro} to endow uniform regularity properties to $w_{\nu}$ in the process of characterizing both $m$ and $R$. As we illustrate briefly below, different choices of the energy $e_{\nu}$ or $z_{\nu}$ in the EEE will generate different forms of regularity statements in terms of the corresponding dissipation structure. 

\medskip

First we will show that $m=\rho\nabla p$.
Observe that the corresponding flux term for  $\rho_{\nu}$, from the continuity equation  \eqref{eq: rho}, is $\rho_{\nu}\nabla w_{\nu}$. 
Recalling that $p\in \partial f_0(\rho)$, if we let $f^*$ denote the convex conjugate of function $f$, then we have the equivalent relation $\rho\in \partial f_0^*(p)$.  Hence, $\rho\nabla p$ can be written as $\nabla f_0^*(p)$, thus, the central step in the characterization of $m$ is to show that $\rho_{\nu}\nabla \wnu$ is almost $\nabla f_{\nu}^*(w_{\nu})$, namely that 
$$
\rho_{\nu}\nabla w_{\nu}-\nabla f_{\nu}^*(w_{\nu}) = (f_\nu^{*\prime}(\pnu) - f_\nu^{*\prime}(\wnu))\nabla \wnu \rightharpoonup 0 \hbox{ as } \nu\to 0.
$$

Since $\wnu$ is formally expected to  converge to the same limit as $\pnu$,  the above calculation looks promising. But an important challenge arises since our $f_{\nu}^*$ is not uniformly $C^1$. Thus, there may be (albeit small, due to convexity) regions where its derivative may jump.
This is where we first use the EEE in a crucial way to yield a type of uniform regularity property for $\wnu$. This property allows us to ignore the sets where $f_\nu^{*\prime}(\wnu)$ has steep changes when integrating against $\nabla \wnu$. More precisely, we can use the EEE to quantitatively show that $\nabla\wnu$ does not concentrate on a set where $\wnu$ maps to a small-measured set of images.
See Sections~\ref{sec: m} and \eqref{est:uniform} for further discussion of this statement, as well as the rest of the convergence proof (for instance controlling $\pnu-\wnu$), where other types of EEE have been applied.

\medskip

It remains to show that $R=G(p)$. We will in fact show a stronger result in achieving this, namely we will show the strong convergence of the velocity field $\nabla\wnu$ to $\nabla p$.  As in \cite{SS} we compare the energy dissipation structure of the approximating flow with the limiting flow. On the other hand our approximate flow $\rho_{\nu}$ is not a gradient flow, and thus their argument cannot be directly applied. Instead our key idea here is to employ the $H^{-1}$ energy dissipation equation generated again by EEE (Corollary~\ref{cor:h-1}), where the corresponding dissipation is generated by $|\nabla f_{\nu}^*(w_{\nu})|^2$, to compare with the corresponding term in the limit equation, $|\nabla f_0^*(p)|^2$.

\medskip

Let us re-emphasize that the generalized energy dissipation structure that we obtain for $\rho_{\nu}$ is of independent interest. Its physical meaning in terms of the energy flows as well as its full usage remain open to be explored. 

\medskip

\paragraph{Outline of the paper.}  In the next section we present the main assumptions and the definitions of solutions for \eqref{eq: rho} and \eqref{eq:limit}. In Section~\ref{sec: a priori} we present some standard \textit{a priori} estimates on density and pressure, while Section~\ref{sec: eee} is devoted to the presentation and proof of the family of entropy dissipation inequalities. These will be employed in the last section, Section~\ref{sec: strong comp}, to characterize the limit flux and show the strong compactness of the velocity field, concluding the proof of the main theorem.

\paragraph{Acknowledgments} M.J. is supported by NSF-DMS-2400641.
 I. K. is supported by NSF-DMS-2153254. N.D. would like to acknowledge the hospitality of the Departments of Mathematics of UCLA and UCSB during her research stay, where the foundation for this project was laid. N.D. would also like to thank Filippo Santambrogio for generously supporting her visit to California with his ERC Advanced Grant EYAWKAJKOS.
\section{Preliminaries}\label{sec: assumptions}
Before presenting the main results of the paper, we state our assumptions on $G$ and the energy density functions $\{f_{\nu}\}_{\nu\geq 0}$.

\begin{assumption}\label{as: reaction}
Let $G\in C([0,+\infty))$ be a decreasing function such that $G(0)>0$.  Moreover, there exists $p_H>0$ such that $G(p_H)=0$.
\end{assumption}

\begin{assumption}\label{as: energies}
Let $f_{\nu}:\RR\to\RR\cup\{+\infty\}$ such that $f_{\nu}\to f_0$ a.e. in $\RR$ with the following property for each $\nu\geq 0$:
\begin{enumerate}
\item \label{as:energy_convex} $f_{\nu}$ is proper, lower semicontinuous, and convex, in $\mathbb R$. 

\item \label{as:energy_domain} there exists $a_0>0$ such that $[0,a_0]\subset \dom(f_{\nu}):=\{a\in \RR:\; \partial f_{\nu}(a) \neq \varnothing\}$.

\item\label{as: limit/a}  $f_{\nu}(a)=+\infty$ for all $a<0$, 
$\lim_{a\to 0^+} a^{-1}f_{\nu}(a)=0$, and $\lim_{a\to +\infty} a^{-1}f_{\nu}(a)=+\infty$.

\end{enumerate}
\end{assumption}

\begin{remark}
    Assumption~\ref{as: energies}  \eqref{as:energy_domain} requires the energies to be finite on $[0,a_0]$ for all $\nu\geq 0$. Hence, for any $M>0$ there exists a density $\rho:\RR^d\to [0,\infty)$ with total mass $M$, finite total energy, and bounded support. This gives a uniform way to eliminate degenerate energies where nonzero total mass and finite energy are incompatible.
\end{remark}
\begin{remark}
    Note that Assumption~\ref{as: energies}  \eqref{as: limit/a} ensures that the energy densities are identically infinity on negative values, which is natural in the context of density evolution equations where negative mass should be impossible.  The advantage of adopting this convention is that we will then automatically know that the dual functions $f_{\nu}^*$ are nonnegative and nondecreasing on all of $\RR$.  The remaining assumptions guarantee that the pressure can always be taken to be an element of $[0,\infty)$ and that the dual energy $f_{\nu}^*$ does not take the value $+\infty$ on $[0,\infty)$. 
\end{remark}
\begin{remark}
  Due to convexity of $f_{\nu}$, pointwise convergence assumption implies uniform convergence of both $f_{\nu}$ to $f_0$ and its convex dual $f^*_{\nu}$ to $f_0^*$   on compact subsets of $\dom(f_0)$ and $\dom(f_0^*)$ respectively, see, for instance, \cite{rockafellar}. 
\end{remark}

\medskip 

We now introduce the definitions of weak solutions to the two equations.

\begin{definition}[Weak solution for compressible Brinkman]
    \label{def:wk_sol_brinkman}
	We say that a nonnegative triple of functions  $(\rhonu, \pnu,\wnu)$ is a weak solution to \eqref{eq: rho} with initial data $\rhonu^{\mathrm{in}}\in L^1(\RR^d)\cap L^\infty(\RR^d)$ satisfying Definition~\ref{def in}, if $\rhonu\in L^{\infty}_{\loc}([0,\infty);L^1(\RR^d)\cap L^{\infty}(\RR^d))$, $\pnu\in L^\infty([0,\infty)\times\RR^d)$, $\wnu\in L^2_{\loc}([0,\infty);H^1(\RR^d))$,  we have the relations 
    \begin{align*}
	\pnu\in \partial f(\rhonu),\quad  	 - \nu \Delta w_\nu+w_\nu  = \pnu, \quad \textup{almost everywhere in} \;\;(0,\infty)\times \RR^d
	\end{align*}
   and for any test function $\varphi \in C_{c}^{\infty}([0,\infty); C^{\infty}(\RR^d ))$, $(\rhonu, \pnu, \wnu)$ satisfy the weak equation
	\begin{align*}
		-\int_0^{\infty}\!\!\! \int_{\RR^d} &\rhonu \partial_t \varphi dx dt 
        + \int_0^{\infty} \!\!\! \int_{\RR^d} \rhonu \nabla \wnu \cdot \nabla \varphi dx dt
		= \int_0^{\infty} \!\!\! \int_{\RR^d} \varphi \rhonu G(\pnu ) dx dt +\int_{\RR^d} \varphi(x,0)\rhonu^{\mathrm{in}}(x) dx.
	\end{align*}
    \end{definition}

\begin{definition}[Weak solution for incompressible Darcy]\label{def:wk_sol_HS}
    We say that a nonnegative pair of functions $(\rho, p)$ is a weak solution to \eqref{eq:limit} with nonnegative initial data $\rho^{\mathrm{in}}\in L^1(\RR^d)\cap L^\infty(\RR^d)$ such that $f_0(\rho^{\textup{in}})\in L^1(\RR^d)$, if $\rho\in L^{\infty}_{\loc}([0,\infty);L^1(\RR^d)\cap L^{\infty}(\RR^d))$, $p\in \partial f_0 (\rho)$, $p\in L^2_{\loc}([0,\infty);H^1(\RR^d))$ and there holds
	\begin{align*}
		-\int_0^{\infty}\!\!\! \int_{\RR^d} &\rho \partial_t \varphi dx dt 
        + \int_0^{\infty} \!\!\! \int_{\RR^d} \rho \nabla p \cdot \nabla \varphi dx dt
		= \int_0^{\infty} \!\!\! \int_{\RR^d} \varphi \rho G(p ) dx dt +\int_{\RR^d} \varphi(x,0)\rho^{\mathrm{in}}(x) dx.
	\end{align*}
	for any test function $\varphi \in C^{\infty}_c([0,\infty);C^{\infty}(\RR^d))$.
\end{definition}

\section{A Family of Energy Evolution Equations}
\label{sec: eee}

In this section, we will state a family of energy evolution equations for Brinkman's law, which will play a fundamental role in the rest of our analysis. Given a convex energy function $e_{\nu}$ defined on the density variable, the equation describes the evolution of the energy density $e_{\nu}(\rhonu)$ along the flow. In Section \ref{ssec:eee}, we will show in  Proposition~\ref{prop:eee} that, for a large class of energies $e_{\nu}$, there exists a convex function $z_{\nu}:\RR\to\RR$ such that the equation
\begin{equation}\label{eq:formal_energy_equation}
\partial_t e_{\nu}(\rhonu)-\nabla \cdot \big(e_{\nu}(\rhonu)\nabla \wnu\big)-\Delta z_{\nu}(\wnu)+{\mathcal{L}_z} =\rhonu e_{\nu}'(\rhonu)G(\pnu),
\end{equation}
is satisfied in a weak sense, where $\mathcal{L}_z$ is a nonnegative dissipation term
$$
\mathcal{L}_z:= z''_{\nu}(\wnu)|\nabla \wnu|^2+\big(z'_{\nu}(\wnu)-z'_{\nu}(\pnu)\big)\frac{\wnu-\pnu}{\nu} \geq 0.
$$
For the purposes of our discussion, we will assume for simplicity that $e_{\nu}$ and $z_{\nu}$ are sufficiently smooth on their respective domains; however, this will be an important issue in our subsequent analysis.

Crucially, $\mathcal{L}_z$ is nonnegative as a consequence of the convexity of $z_{\nu}$. Indeed, $z''_\nu$ is positive and $z'_\nu$ is order preserving, so both terms in $\mathcal{L}_z$ are nonnegative. In addition, the second term in $\mathcal{L}_z$ provides some control on $\sqrt{\nu}\Delta \wnu$; applying the fundamental theorem of calculus yields
\begin{align*}
\big(z'_{\nu}(\wnu)-z'_{\nu}(\pnu)\big)\frac{\wnu-\pnu}{\nu}&=\frac{|\wnu-\pnu|^2}{\nu}\int_0^1 z''(\beta \wnu+(1-\beta)\pnu)\, d\beta\\
&=\nu|\Delta \wnu|^2\int_0^1 z''(\beta \wnu+(1-\beta)\pnu)\, d\beta.
\end{align*}
Thanks to this nonnegativity, $\mathcal{L}_z$ has a physical interpretation as an instantaneous rate of energy dissipation, accounting for the loss of energy to friction along the flow.  

Let us emphasize that the dissipation structure of \eqref{eq:formal_energy_equation} was not previously known in the literature except in one very special case (see Section \ref{ssec:heuristics}).  Indeed, if one attempts to derive \eqref{eq:formal_energy_equation} by formally differentiating $e_{\nu}(\rhonu)$ in time, the dissipation structure $\mathcal{L}_z$ is not at all apparent --- an initial computation of $\frac{d}{dt} e(\rhonu)$ yields a much more mysterious term whose sign appears to be ambiguous (c.f. the term $M_{\nu}$ in equation \eqref{eq:formal_energy_equation_unclear}). In Section \ref{ssec:heuristics}, we will illustrate how we circumvent this ambiguity through the introduction of the convex function $z_{\nu}$, obtained via a non-trivial transformation of $e_{\nu}$.

\medskip

As we will see, the dissipation structure provided by $\mathcal{L}_z$, for a number of different choices of $e_{\nu}$, will play a fundamental role in our argument.
First, for a certain choice of $e_{\nu}$, one has $z''_{\nu}\equiv 1$, and thus,  \eqref{eq:formal_energy_equation} leads to uniform bounds on $|\nabla w_{\nu}|^2$ and $\nu|\Delta \wnu|^2$ with respect to $\nu$, see \eqref{e.EDI z''=1}. This provides some crucial estimates needed to pass to the limit $\nu\to 0$ in Section~\ref{sec: strong comp}. Next, we utilize the generality of the EEE in Section~\ref{sec: m}, where we will show that $\nabla w_{\nu}$ gives little mass to values of $\wnu$ where $f^{*\prime}_{\nu}(w_{\nu})$ has large slopes, which is crucial in characterizing the limit flux: here is the place where the general nature of our $e_{\nu}$ really stands out. Lastly, in Section~\ref{sec: final},  we exploit the dissipation structure for $e_{\nu}(a):=af_{\nu}(a)-2\int_0^a f_{\nu}(\alpha)\, d\alpha$ to obtain the strong compactness of $\nabla \wnu$, see Proposition~\ref{prop:nablafstar}. Notably, our strategy does not involve uniform bounds on any higher spatial derivatives or any time derivatives of $\wnu$, which are likely false in the most challenging cases such as $f_{\nu}(a)=\nu a^{1/\nu}$. 

 \medskip

Before we provide a rigorous derivation of \eqref{eq:formal_energy_equation}, we first give some brief heuristics to illustrate the main idea and explain the novelty of our contribution in the context of the previous literature.

\subsection{Heuristics}\label{ssec:heuristics}
A natural first approach to obtain the energy evolution equation is to take the time derivative of $e_{\nu}(\rhonu)$ and plug in the density evolution equation (\ref{eq: rho}).  Doing so,  one obtains the equation
\begin{equation}\label{eq:formal_energy_equation_unclear}
\partial_t e_{\nu}(\rhonu)-\nabla \cdot (e_{\nu}(\rhonu)\nabla \wnu)+ \underbrace{\big(e_{\nu}(\rhonu)-\rhonu e'_{\nu}(\rhonu)\big)\Delta \wnu}_{M_\nu}=\rhonu e'_{\nu}(\rhonu)G(\pnu).
\end{equation}
This is a continuity equation along the expected vector field $-\nabla \wnu$, with the additional source terms $M_{\nu}:=\big(e_{\nu}(\rhonu)-\rhonu e'_{\nu}(\rhonu)\big)\Delta \wnu$ and $\rhonu e'_{\nu}(\rhonu)G(\pnu)$. The growth term $\rhonu e'_{\nu}(\rhonu)G(\pnu)$ is fairly straightforward and can be handled via Gronwall's inequality. However, the additional term $M_{\nu}$ is much more mysterious.

In a very special case where $e_{\nu}(\rhonu)-\rhonu e'_{\nu}(\rhonu)$ simplifies,  previous literature on Brinkman's law was able to identify a dissipation structure in $M_{\nu}$ \cite{DDMS, ES2023}. First, in \cite{DDMS}, the authors considered Brinkman's law with the density-pressure coupling $\pnu=\rhonu$ (corresponding to the choice $f_{\nu}(a)=\frac{1}{2}a^2$) and studied the energy evolution equation for the entropy function $e_{\nu}(a)=a\log(a)-a$.  With these specific choices, one finds that 
\begin{equation}\label{eq:simple_case}
e_{\nu}(\rhonu)-\rhonu e'_{\nu}(\rhonu)=-\rhonu=-\pnu.
\end{equation} As a result, $M_{\nu}=-\pnu\Delta \wnu$ and one can then see that 
\[
-\pnu\Delta \wnu=-\wnu\Delta \wnu+(\wnu-\pnu)\Delta \wnu=-\frac{1}{2}\Delta (\wnu^2)+|\nabla\wnu|^2 +\nu|\Delta \wnu|^2,
\] 
where the second equality follows from the identity $\wnu-\pnu=\nu\Delta\wnu$.  This was extended in \cite{ES2023}, to the more general power law density-pressure coupling, $\pnu=\rhonu^{\gamma}$ for $\gamma>1$, (which corresponds to $f_{\nu}(a)=\frac{1}{\gamma+1}a^{\gamma+1}$), where it was noticed that by choosing $e_{\nu}(a)=\frac{a^{\gamma}-a}{\gamma-1}$, one obtains the same simplification $e_{\nu}(\rhonu)-\rhonu e'(\rhonu)=-\pnu$. Let us emphasize that these works only establish energy dissipation for a single choice of convex energy, which does not provide sufficient estimates for our strategy to work.

In contrast, we are able to show that the mysterious term $M_{\nu}$ is always a good term, as long as $e_{\nu}$ satisfies some mild technical conditions (see the assumptions in Lemma \ref{lem:z_from_e}).
The key insight in our work is that for each convex energy $e_{\nu}$ defined on the density, we can find a corresponding convex function $z_{\nu}:\RR\to\RR$ of the pressure such that
\begin{equation}\label{eq:density_to_pressure_convex_transformation}
\rhonu e'_{\nu}(\rhonu)-e_{\nu}(\rhonu)=z'_{\nu}(\pnu).
\end{equation}
Using the identity (\ref{eq:density_to_pressure_convex_transformation}), we are able to show that $M_{\nu}$ can always be written as the sum of a nonnegative term and a Laplacian. Indeed, we first write
\[ 
M_{\nu}:=\big(e_{\nu}(\rhonu)-\rhonu e'_{\nu}(\rhonu)\big)\Delta \wnu=-z'(\pnu)\Delta \wnu=(z'(\wnu)- z'(\pnu))\Delta \wnu-z'(\wnu)\Delta \wnu.
\]
After using the identities $\nu \Delta \wnu={\wnu-\pnu}$ and $\Delta (z(\wnu))=z''(\wnu)|\nabla \wnu|^2+z'(\wnu)\Delta \wnu$, we find 
\[
M_\nu=(z'(\wnu)-z'(\pnu))\frac{\wnu-\pnu}{\nu}-\Delta (z(\wnu))+z''(\wnu)|\nabla \wnu|^2. 
\]
Putting this all together, one finally obtains the energy evolution equation in the form \eqref{eq:formal_energy_equation}.

\subsection{Derivation of the energy evolution equation}\label{ssec:eee}

Now we turn towards rigorously establishing the energy evolution equation. 
As we saw above, the key step is the introduction of a convex function $z_{\nu}$ satisfying the identity \eqref{eq:density_to_pressure_convex_transformation}. In general, $e_{\nu}$ may not be differentiable and we need to replace $e'_{\nu}(\rhonu)$ by some element in $\partial e_{\nu}(\rhonu)$. To avoid worrying too much about the specific variables $\rhonu, \pnu$, we want to ensure that $e_{\nu}$ and $z_{\nu}$ are correctly linked for all possible values. Hence, we need to ensure that for all $a\in \dom(e_{\nu})$ and all $b\in \partial f_{\nu}(a)$, there exists $c\in \partial e_{\nu}(a)$ such that
\begin{equation}\label{eq:abc_equation}
ac-e_{\nu}(a)=z'_{\nu}(b).
\end{equation}
We begin with two Lemmas that explain how to construct the convex function $z_{\nu}$ from $e_{\nu}$ so that \eqref{eq:abc_equation} is satisfied (and vice-versa). 
While these constructions can be done starting from almost any convex $e_{\nu}$ (or $z_{\nu}$), the dissipation term $\mathcal{L}_z$ involves second derivatives of $z_{\nu}$. For this reason, we will restrict our attention to cases where the second derivatives of $z_{\nu}$ are sufficiently regular.   This will place some mild technical restrictions on the admissible choices of $e_{\nu}$.  

\medskip

\begin{lemma}\label{lem:z_from_e}
    Suppose that $e_{\nu}:\dom(f_{\nu})\to\RR$ is a convex function such that $e_{\nu}(0)=0$, and 
    \begin{equation}\label{eq:e_compatbility}
    \partial e_{\nu}(a)=\{S(b): b\in \partial f_{\nu}(a)\} \quad \textup{for all} \; a\in \dom(f_{\nu}),
    \end{equation}
    for some nondecreasing function $S\in W^{1,1}_{\loc}([0,\infty))$.  
If we define $z_{\nu}:[0,\infty)\to\RR$ by setting $z_{\nu}(0)=0$ and
\[
    z_{\nu}'(b)=\int_0^b f^{*\prime}_{\nu}(\beta)S'(\beta)\, d\beta,
    \]
    then $z_{\nu}$ is convex, $z_{\nu}\in W^{2,1}_{\loc}([0,\infty))$, and $z_{\nu}$ satisfies the coupling relation \eqref{eq:abc_equation} with $e_{\nu}$.
\end{lemma}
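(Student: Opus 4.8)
The plan is to verify the three claimed properties of $z_\nu$ — convexity, $W^{2,1}_{\loc}$ regularity, and the coupling relation \eqref{eq:abc_equation} — directly from the defining formula for $z_\nu'$. First I would record the basic facts about $f_\nu^*$ coming from Assumption \ref{as: energies}: since $f_\nu$ is proper, l.s.c., convex, vanishes on $\{a<0\}$ with superlinear growth at $+\infty$ and sublinear behavior at $0^+$, its conjugate $f_\nu^*$ is finite, nonnegative, nondecreasing, convex, and locally Lipschitz on $[0,\infty)$; hence $f_\nu^{*\prime}$ exists a.e., is nonnegative and nondecreasing, and is locally bounded. Combined with $S\in W^{1,1}_{\loc}([0,\infty))$ nondecreasing (so $S'\ge 0$ a.e., locally integrable), the integrand $\beta\mapsto f_\nu^{*\prime}(\beta)S'(\beta)$ is a nonnegative, locally integrable function on $[0,\infty)$. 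Therefore $b\mapsto z_\nu'(b)=\int_0^b f_\nu^{*\prime}(\beta)S'(\beta)\,d\beta$ is well-defined, absolutely continuous, and nondecreasing, which immediately gives that $z_\nu$ (the further integral, with $z_\nu(0)=0$) is convex with $z_\nu\in W^{2,1}_{\loc}([0,\infty))$ and $z_\nu''=f_\nu^{*\prime}S'$ a.e.

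The substantive step is the coupling relation \eqref{eq:abc_equation}: for every $a\in\dom(f_\nu)$ and every $b\in\partial f_\nu(a)$, I must produce $c\in\partial e_\nu(a)$ with $ac-e_\nu(a)=z_\nu'(b)$. Given hypothesis \eqref{eq:e_compatbility}, the natural choice is $c:=S(b)$, which indeed lies in $\partial e_\nu(a)$. So what remains is the identity
\[
a\,S(b)-e_\nu(a)=\int_0^b f_\nu^{*\prime}(\beta)S'(\beta)\,d\beta \qquad\text{whenever } b\in\partial f_\nu(a).
\]
I would prove this by integration by parts on the right-hand side: $\int_0^b f_\nu^{*\prime}(\beta)S'(\beta)\,d\beta = f_\nu^{*\prime}(b)S(b)-f_\nu^{*\prime}(0)S(0)-\int_0^b f_\nu^{*\prime\prime}(\beta)S(\beta)\,d\beta$, using that both factors are absolutely continuous on $[0,b]$ (one should be slightly careful near $\beta=0$, but $f_\nu^{*\prime}(0)=0$ by the sublinearity assumption $\lim_{a\to0^+}a^{-1}f_\nu(a)=0$, which forces $f_\nu^*$ to vanish to first order at $0$; and $S$ is continuous). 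Now $b\in\partial f_\nu(a)$ is equivalent to $a\in\partial f_\nu^*(b)$, i.e. (at points of differentiability) $f_\nu^{*\prime}(b)=a$; more robustly one uses the Young equality $f_\nu(a)+f_\nu^*(b)=ab$. The key is to reinterpret the remaining integral $\int_0^b f_\nu^{*\prime\prime}(\beta)S(\beta)\,d\beta$ as $\int_0^b S(\beta)\,d(f_\nu^{*\prime}(\beta))$, and then recognize this as an integral that "builds up" $e_\nu$: by \eqref{eq:e_compatbility}, as $\beta$ ranges over $\partial f_\nu$-values, $S(\beta)$ traces out the subdifferential of $e_\nu$ along the corresponding densities $\alpha=f_\nu^{*\prime}(\beta)$, so $\int S(\beta)\,d(f_\nu^{*\prime}(\beta))=\int_0^{a} e_\nu'(\alpha)\,d\alpha = e_\nu(a)-e_\nu(0)=e_\nu(a)$ (a change of variables $\alpha=f_\nu^{*\prime}(\beta)$, valid after reducing to a region where $f_\nu^*$ is strictly convex, with the flat pieces contributing nothing on either side). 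Assembling the pieces gives $z_\nu'(b)=a\,S(b)-e_\nu(a)$, as desired.

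The main obstacle I anticipate is handling the places where $f_\nu$ and $f_\nu^*$ degenerate: $f_\nu^*$ may have affine pieces (where $f_\nu^{*\prime}$ is constant, i.e. $f_\nu$ has corners) and $f_\nu$ may have affine pieces (where $f_\nu^*$ has corners / $f_\nu^{*\prime}$ jumps), so the change of variables $\alpha=f_\nu^{*\prime}(\beta)$ is not a clean diffeomorphism and the "derivative" $e_\nu'$ is genuinely set-valued. The careful way to do this is to avoid pointwise derivatives entirely: work with the convex functions and their (monotone) subdifferentials, use the general integration-by-parts / Riemann–Stieltjes formula for monotone functions, and exploit \eqref{eq:e_compatbility} to match up the measures $d(f_\nu^{*\prime})$ and $d(e_\nu')$ under the correspondence encoded by $S$. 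One should also separately check the edge behavior at $a=0$ (where $e_\nu(0)=0$, $z_\nu(0)=0$, and $f_\nu^{*\prime}(0)=0$ make everything consistent) and note that monotonicity of $S$ together with $f_\nu^{*\prime}\ge 0$ is exactly what guarantees all the integrals are of a single sign, so no cancellation subtleties arise. Once the degeneracies are dispatched, the computation is the short integration-by-parts identity above.
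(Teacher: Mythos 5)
Your proposal is correct and follows essentially the same route as the paper: choose $c=S(b)$, represent $e_{\nu}(a)=\int_0^a S(f_{\nu}'(\alpha))\,d\alpha$, and convert between the density-side and pressure-side integrals via the pushforward of Lebesgue measure under $f_{\nu}'$ (equivalently the change of variables $\alpha=f_{\nu}^{*\prime}(\beta)$) together with a Stieltjes integration by parts; you simply run the computation from $z_{\nu}'$ toward $e_{\nu}$ rather than the reverse. The one phrase to fix is the claim that $f_{\nu}^{*\prime}$ is absolutely continuous on $[0,b]$ — it is only monotone and may jump where $f_{\nu}$ has affine pieces — but your closing paragraph already supplies the correct remedy (the Riemann--Stieltjes integration by parts for monotone functions), which is exactly what the paper uses with its measure $df_{\nu}^*(\beta)$.
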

\begin{remark}
    Note that in general, $z_{\nu}$ is  not uniquely determined by $e_{\nu}$.   At singular points $a\in \dom(f_{\nu})$ where $\partial f_{\nu}(a)$ and $\partial e_{\nu}(a)$ both fail to be single-valued, one can replace $S$ by $S\circ R$ where $R:\RR\to\RR$ is any nondecreasing function that is the identity on the endpoints of $\partial f_{\nu}(a)$, without changing $e_{\nu}$.  On the other hand, if $R(b)\neq b$ for $b\in \textup{int}(\partial f_{\nu}(a))$,  then $z_{\nu}'$ will change. 
\end{remark}

\bigskip 

Now we have seen how to obtain $z_{\nu}$ from $e_{\nu}$.  Although we have been discussing the energy evolution equation from the perspective of $e_{\nu}$,  we will mostly be interested in the dissipation term $\mathcal{L}_z$ rather than properties of $e_{\nu}$.  Hence, it is often more useful to start from $z_{\nu}$ and define $e_{\nu}$ in terms of $z_{\nu}$. This is accomplished in the following Lemma.

\begin{lemma}\label{lem:e_from_z}
Suppose that $z_{\nu}:\RR\to\RR$ is a convex function such that $z_{\nu}(0)=z_{\nu}'(0)=0$ and $z_{\nu}\in W^{2,1}_{\loc}([0,\infty))$.  If we choose some $a_1\in \textup{int}(\dom(f_{\nu}))$ and define $e_{\nu}:\dom(f_{\nu})\to\RR$ by setting $e_{\nu}(0)=0$ and
\begin{equation}\label{eq:e_from_z}
e_{\nu}(a):=
    a \int_{a_1}^a \frac{z'_{\nu}(f'_{\nu}(\alpha))}{\alpha^2}\, d\alpha \quad \textup{for} \; a\in \dom(f_{\nu})\cap (0,\infty), 
\end{equation}
then $e_{\nu}$ is convex and satisfies the coupling relation \eqref{eq:abc_equation} with $z_{\nu}$.
Furthermore, $e_{\nu}$ is unique up to a linear factor.
\end{lemma}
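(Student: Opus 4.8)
The plan is to verify both assertions directly from the explicit formula \eqref{eq:e_from_z}, without assuming $f_{\nu}$ (or $e_{\nu}$) to be smooth. The formula is motivated by the observation that, in the smooth case, the coupling relation \eqref{eq:abc_equation} with $b=f_{\nu}'(a)$ reads $a e_{\nu}'(a)-e_{\nu}(a)=z_{\nu}'(f_{\nu}'(a))$, that is $\tfrac{d}{da}\big(e_{\nu}(a)/a\big)=z_{\nu}'(f_{\nu}'(a))/a^{2}$; integrating from $a_{1}$ with $e_{\nu}(a_{1})=0$ gives exactly \eqref{eq:e_from_z}. Here $f_{\nu}'$ denotes the a.e.\ derivative of the convex function $f_{\nu}$. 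Since $z_{\nu}\in W^{2,1}_{\loc}([0,\infty))$, the function $z_{\nu}'$ is continuous and nondecreasing on $[0,\infty)$, and $z_{\nu}'\geq 0$ there because $z_{\nu}'(0)=0$; as $f_{\nu}$ is increasing, $f_{\nu}'\geq 0$ a.e., so the integrand $z_{\nu}'(f_{\nu}'(\alpha))/\alpha^{2}$ is nonnegative and locally integrable on $\mathrm{int}(\dom f_{\nu})\cap(0,\infty)$. Thus $e_{\nu}$ is well defined there, and $g(a):=e_{\nu}(a)/a$ is nondecreasing.

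The heart of the matter is a single subgradient estimate, from which both claims follow. Fix $a\in\dom(f_{\nu})\cap(0,\infty)$ and $b\in\partial f_{\nu}(a)$, and put $c(a,b):=\big(e_{\nu}(a)+z_{\nu}'(b)\big)/a$, so that $a\,c(a,b)-e_{\nu}(a)=z_{\nu}'(b)$ by construction. Writing $e_{\nu}(a)=a\,g(a)$ and simplifying, the subgradient inequality $e_{\nu}(a')-e_{\nu}(a)\geq c(a,b)(a'-a)$ is equivalent to
\[
a'\int_{a}^{a'}\frac{z_{\nu}'(f_{\nu}'(\alpha))}{\alpha^{2}}\,d\alpha\ \geq\ z_{\nu}'(b)\,\frac{a'-a}{a}.
\]
This is exactly where the hypotheses enter: monotonicity of the subdifferential gives $f_{\nu}'(\alpha)\geq b$ for a.e.\ $\alpha>a$ and $f_{\nu}'(\alpha)\leq b$ for a.e.\ $\alpha<a$, and composing with the nondecreasing $z_{\nu}'$ and comparing with the elementary integral $\int_{a}^{a'}\alpha^{-2}\,d\alpha=(a'-a)/(a a')$ establishes the inequality for both $a'>a$ and $a'<a$; for $a'=0$ it reduces to $z_{\nu}'(b)\geq 0$. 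Hence $c(a,b)\in\partial e_{\nu}(a)$ for every interior $a$, which, combined with $a c(a,b)-e_{\nu}(a)=z_{\nu}'(b)$, proves the coupling \eqref{eq:abc_equation} at interior points; at $a=0$, Assumption~\ref{as: energies}\eqref{as: limit/a} forces $f_{\nu}'(0^{+})=0$, so $\partial f_{\nu}(0)\cap[0,\infty)=\{0\}$ and \eqref{eq:abc_equation} holds there using $z_{\nu}'(0)=0$.

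To obtain convexity I would represent $e_{\nu}$ as the supremum of the affine minorants just produced: with $\ell_{a',b'}(x):=e_{\nu}(a')+c(a',b')(x-a')$ over $a'\in\dom(f_{\nu})\cap(0,\infty)$, $b'\in\partial f_{\nu}(a')$, the estimate above yields $e_{\nu}\geq\ell_{a',b'}$ everywhere, while taking $a'=a$ shows $e_{\nu}(a)=\sup_{a',b'}\ell_{a',b'}(a)$ at interior $a$; at $a=0$ one has $\ell_{a',b'}(0)=-z_{\nu}'(b')\leq 0$ with $-z_{\nu}'(b')\to 0$ as $a'\to 0^{+}$, so the supremum is $0=e_{\nu}(0)$. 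A supremum of affine functions is convex (and lower semicontinuous), which is the desired convexity on all of $\dom(f_{\nu})$ and also gives $\lim_{a\to0^{+}}e_{\nu}(a)=0$. Uniqueness up to a linear term follows since any convex $\tilde e_{\nu}$ on $\dom(f_{\nu})$ satisfying \eqref{eq:abc_equation} with $z_{\nu}$ must obey $a\tilde e_{\nu}'(a)-\tilde e_{\nu}(a)=z_{\nu}'(f_{\nu}'(a))=a e_{\nu}'(a)-e_{\nu}(a)$ at a.e.\ interior point, so $h:=\tilde e_{\nu}-e_{\nu}$, being locally Lipschitz there, satisfies $(h(a)/a)'=0$ a.e., i.e.\ $h(a)=\lambda a$; conversely, adding $\lambda a$ preserves both convexity and \eqref{eq:abc_equation}.

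\textbf{Main obstacle.} There is no deep difficulty; the content of the lemma is precisely that the function given by the ODE-type formula \eqref{eq:e_from_z} is in fact convex, so everything rests on the integral comparison above — i.e.\ on converting the monotonicity of $f_{\nu}'$ and $z_{\nu}'$ into a global subgradient inequality. The only genuinely delicate bookkeeping is (i) moving between the a.e.\ derivative $f_{\nu}'$ that appears in \eqref{eq:e_from_z} and the subdifferential $\partial f_{\nu}$ when $f_{\nu}$ is not differentiable, and (ii) the endpoint $a=0$, where one must combine $z_{\nu}'(0)=0$ with $f_{\nu}'(0^{+})=0$ (from Assumption~\ref{as: energies}\eqref{as: limit/a}) to reconcile \eqref{eq:e_from_z} with the normalization $e_{\nu}(0)=0$.
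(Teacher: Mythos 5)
Your proof is correct, and its core computation --- the integral comparison
\[
a\,e_{\nu}(a')-a'\,e_{\nu}(a)=a a'\int_{a}^{a'}\frac{z'_{\nu}(f'_{\nu}(\alpha))}{\alpha^{2}}\,d\alpha\ \geq\ (a'-a)\,z'_{\nu}(b),
\]
obtained from the monotonicity of $\partial f_{\nu}$ and of $z'_{\nu}$ --- is exactly the one the paper uses to verify the coupling relation \eqref{eq:abc_equation}, and your uniqueness argument (the ODE $H'(a)-H(a)/a=0$ a.e.\ for the difference) is also the paper's. Where you genuinely diverge is in how convexity of $e_{\nu}$ is established. The paper proves convexity \emph{first} and independently: it replaces $z'_{\nu}\circ f'_{\nu}$ by its Moreau--Yosida regularizations $Z_{\delta}$, shows by differentiating and integrating by parts that the resulting $e_{\nu,\delta}$ has an increasing derivative, and then recovers $e_{\nu}=\sup_{\delta}e_{\nu,\delta}$. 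You instead extract convexity as a byproduct of the same subgradient estimate used for the coupling: once $e_{\nu}\geq\ell_{a',b'}$ everywhere with equality at $a'$, the function is a pointwise supremum of affine minorants and hence convex. Your route is more economical (one estimate does double duty, and no regularization of the integrand is needed), at the cost of having to handle the endpoint $a=0$ inside the supremum, which you do correctly by noting $\ell_{a',b'}(0)=-z'_{\nu}(b')\to 0$ as $a'\to 0^{+}$; the paper's route isolates convexity from the subdifferential bookkeeping and checks right-continuity at $0$ as a separate, simpler step. Both treat $\partial f_{\nu}(0)$ with the same (harmless) restriction to nonnegative slopes. No gaps.
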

\begin{remark}
 The freedom of choice in the parameter $a_1$ is why $e_{\nu}$ is only unique up to a linear factor.  Note however, that linear factors do not affect the energy evolution equation \eqref{eq:formal_energy_equation}. Indeed, the linear term can be separated out, and one can then check that it must vanish, since $\rhonu$ itself solves the original equation \eqref{eq: rho}. 
\end{remark}

\smallskip

For the sake of exposition, the proofs of Lemma~\ref{lem:z_from_e} and \ref{lem:e_from_z} are presented in Appendix~\ref{app:1}.

\medskip

 Now we are ready to rigorously state and prove the energy evolution equation, which we will establish in the weak form \eqref{eq:eee}.
\begin{prop}[The Energy Evolution Equation]\label{prop:eee}
Suppose that $e_{\nu}:\dom(f_{\nu})\to\RR$ and $z_{\nu}:[0,\infty)\to\RR$ are convex functions satisfying the coupling relation \eqref{eq:abc_equation} with $e_{\nu}(0)=0$, $z_{\nu}\in W^{2,1}_{\loc}([0,\infty))$, and $z_{\nu}(0)=z_{\nu}'(0)=0$.  
 Given any smooth test function $\psi\in L^{\infty}_c([0,\infty);L^1(\RR^d))$, $e_{\nu}(\rhonu)$ satisfies the weak evolution equation:
\begin{equation}\label{eq:eee}
\begin{split} 
 &\int_0^{\infty}\!\!\! \int_{\RR^d} {\psi \big(z_{\nu}'(\pnu
)-z_{\nu}'(\wnu)\big)\frac{\pnu-\wnu}{\nu}}+\psi z_{\nu}''(\wnu)|\nabla \wnu|^2  \\[0.3em]
&\qquad\qquad+\int_0^{\infty}\!\!\! \int_{\RR^d} e_{\nu}(\rhonu)(\nabla \wnu\cdot \nabla \psi -\partial_t \psi)-z_{\nu}(\wnu)\Delta \psi \\[0.5em]
= &\int_{\RR^d} \psi e_{\nu}(\rhonu^{\mathrm{in}})+\int_0^{\infty}\!\!\! \int_{\RR^d}  \psi \big(e_{\nu}(\rhonu)+z'_{\nu}(\pnu)\big)G(\pnu).
\end{split}
\end{equation}
\end{prop}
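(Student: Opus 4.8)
The plan is to make the heuristic computation of Section~\ref{ssec:heuristics} rigorous in four stages: (i) record the regularity that makes the nonlinear compositions meaningful; (ii) establish the ``ambiguous'' form \eqref{eq:formal_energy_equation_unclear} by a renormalization argument for the continuity equation; (iii) carry out the algebraic rewriting of $M_\nu$ that exposes the dissipation structure, under the temporary assumption that $z_\nu''$ is bounded; and (iv) remove that assumption by truncation and monotone convergence. For stage (i): since $\pnu\in L^\infty$ and $(1-\nu\Delta)^{-1}$ together with its gradient are bounded on $L^\infty$, the elliptic relation gives $\wnu\in L^\infty$, $\nabla\wnu\in L^\infty$, and $\Delta\wnu=(\wnu-\pnu)/\nu\in L^\infty$; interior elliptic estimates then give $\wnu\in W^{2,q}_{\loc}$ for every $q<\infty$, hence $\wnu\in C^{1,\alpha}_{\loc}$. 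The a priori bounds of Section~\ref{sec: a priori} together with well-preparedness confine $\rhonu$ to a compact interval $[0,M_\nu]\subset\dom(f_\nu)$, on which the convex $e_\nu$ is Lipschitz with $e_\nu(0)=0$; thus $e_\nu(\rhonu)\in L^\infty$, $|e_\nu(\rhonu)|\lesssim\rhonu$, and $e_\nu(\rhonu^{\mathrm{in}})\in L^1$. This makes every term of \eqref{eq:eee} meaningful, except the two dissipation terms, whose integrability will only come out of stage (iv).

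For stage (ii) I would view \eqref{eq: rho} as $\partial_t\rhonu-\nabla\cdot(\rhonu\nabla\wnu)=\rhonu G(\pnu)$ with transport field $\nabla\wnu\in W^{1,q}_{\loc}$ and $\nabla\cdot(\nabla\wnu)=\Delta\wnu\in L^\infty$. Mollifying in space, $\rho^\varepsilon:=\rhonu\ast_x\eta_\varepsilon$ satisfies the same equation up to a DiPerna--Lions commutator error tending to $0$ in $L^1_{\loc}$. As $\rho^\varepsilon$ is smooth in $x$, Lipschitz in $t$, and valued in $[0,M_\nu]$, the chain rule for the Lipschitz function $e_\nu$ gives, pointwise,
\[
\partial_t e_\nu(\rho^\varepsilon)-\nabla\cdot\big(e_\nu(\rho^\varepsilon)\nabla\wnu\big)+\big(e_\nu(\rho^\varepsilon)-\rho^\varepsilon e_\nu'(\rho^\varepsilon)\big)\Delta\wnu=e_\nu'(\rho^\varepsilon)\big(\rhonu G(\pnu)\big)\ast_x\eta_\varepsilon+o_\varepsilon(1).
\]
Testing against $\psi$, integrating by parts in $x$ and $t$ (the initial term producing $\int_{\RR^d}\psi(\cdot,0)e_\nu(\rhonu^{\mathrm{in}})$ in the limit), and letting $\varepsilon\to0$ using $\rho^\varepsilon\to\rhonu$ a.e.\ and in $L^p$, continuity and boundedness of $e_\nu$ on $[0,M_\nu]$, and boundedness of $e_\nu'$, yields the weak form of \eqref{eq:formal_energy_equation_unclear}. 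Here $e_\nu'(\rhonu)$ is the a.e.\ subgradient selection, and one checks from the compatibility \eqref{eq:e_compatbility} (together with the fact that $e_\nu$ and $f_\nu$ are non-differentiable on at most countably many values) that it equals $S(\pnu)$ a.e., so the coupling \eqref{eq:abc_equation} gives $\rhonu e_\nu'(\rhonu)-e_\nu(\rhonu)=z_\nu'(\pnu)$ a.e.

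For stage (iii), assume first $z_\nu''\in L^\infty_{\loc}$, i.e.\ $z_\nu\in C^{1,1}_{\loc}$. Substituting $e_\nu(\rhonu)-\rhonu e_\nu'(\rhonu)=-z_\nu'(\pnu)$ into $M_\nu$, splitting $z_\nu'(\pnu)=\big(z_\nu'(\pnu)-z_\nu'(\wnu)\big)+z_\nu'(\wnu)$, using $\nu\Delta\wnu=\wnu-\pnu$ on the first piece and the chain rules $\nabla z_\nu'(\wnu)=z_\nu''(\wnu)\nabla\wnu$, $\Delta z_\nu(\wnu)=z_\nu''(\wnu)|\nabla\wnu|^2+z_\nu'(\wnu)\Delta\wnu$ (valid as $z_\nu'$ is Lipschitz and $\wnu\in W^{2,q}_{\loc}$) on the second, gives $M_\nu=\big(z_\nu'(\pnu)-z_\nu'(\wnu)\big)\tfrac{\pnu-\wnu}{\nu}+z_\nu''(\wnu)|\nabla\wnu|^2-\Delta z_\nu(\wnu)$. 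Combined with $\rhonu e_\nu'(\rhonu)G(\pnu)=\big(e_\nu(\rhonu)+z_\nu'(\pnu)\big)G(\pnu)$ and integration of $\Delta z_\nu(\wnu)$ by parts onto $\psi$, this is precisely \eqref{eq:eee}, with all terms integrable. For the general case (stage (iv)) I truncate $z_\nu^{M\prime\prime}:=\min(z_\nu'',M)$, take $z_\nu^M$ its double antiderivative normalized by $z_\nu^M(0)=z_\nu^{M\prime}(0)=0$, and let $e_\nu^M$ be the associated energy from Lemma~\ref{lem:e_from_z}. Then $z_\nu^{M\prime}\uparrow z_\nu'$, $z_\nu^M\uparrow z_\nu$, $e_\nu^M\to e_\nu$ uniformly on the compact ranges of $\wnu,\pnu,\rhonu$ (Dini), stages (ii)--(iii) apply to each pair, and in \eqref{eq:eee} all terms except the two dissipation terms pass to the limit by these uniform convergences and dominated convergence. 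The two dissipation terms are nonnegative and nondecreasing in $M$, so monotone convergence identifies their limits with $\int\!\int\psi\, z_\nu''(\wnu)|\nabla\wnu|^2$ and $\int\!\int\psi\big(z_\nu'(\pnu)-z_\nu'(\wnu)\big)\tfrac{\pnu-\wnu}{\nu}$ (for $\psi\ge0$; general $\psi$ by splitting), and these limits are finite because the remaining bounded terms pin them down --- which is also how the integrability left open in stage (i) is finally secured.

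The hard part is exactly this last point: when $z_\nu''$ is merely $L^1_{\loc}$ one must give meaning to $z_\nu''(\wnu)|\nabla\wnu|^2$ and prove its integrability with no pointwise bound on $z_\nu''$. Two facts make it work: the Stampacchia property that $\nabla\wnu=0$ a.e.\ on $\{\wnu\in N\}$ for any Lebesgue-null $N$, which renders the composition well defined a.e.\ irrespective of the exceptional set of $z_\nu''$ (the same remark also legitimizes the chain rule for $e_\nu$ in stage (ii)); and the sign-definiteness of both dissipation terms coming from convexity of $z_\nu$, which converts the limit $M\to\infty$ into a monotone convergence whose value is forced by the controlled remaining terms. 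A secondary nuisance is the bookkeeping of subgradient branches in stage (ii) on the (at most countable) set of values where $e_\nu$ or $f_\nu$ is singular.
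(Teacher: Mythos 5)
Your route is genuinely different from the paper's: you work in Eulerian variables via DiPerna--Lions mollification and a Lipschitz chain rule to reach the ``ambiguous'' form \eqref{eq:formal_energy_equation_unclear}, then truncate $z_{\nu}''$ from above and use monotone convergence for the two sign-definite dissipation terms. The paper instead straightens the transport by the Lagrangian flow of $-\nabla\wnu$, turns the equation into an ODE along characteristics, and obtains the energy identity from a \emph{two-sided} convexity inequality for time-difference quotients, approximating $z''_{\nu}$ in $L^1_{\loc}$ and passing to the limit in the dissipation term with the coarea formula. Your stages (iii) and (iv) are sound, and the truncation/monotone-convergence treatment of $z_{\nu}''(\wnu)|\nabla\wnu|^2$ is arguably cleaner than the coarea argument.

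There is, however, a genuine gap in stage (ii), precisely at the point you dismiss as a ``secondary nuisance.'' The mollification argument produces \eqref{eq:formal_energy_equation_unclear} with $e_{\nu}'(\rhonu)$ replaced by $\zeta:=\textup{w*-}\lim_{\eps\to 0} g(\rho^{\eps})$, where $g$ is whatever a.e.\ selection of $\partial e_{\nu}$ the chain rule uses; one only gets $\zeta\in\partial e_{\nu}(\rhonu)$ a.e. To reach \eqref{eq:eee} you must have $\rhonu\zeta-e_{\nu}(\rhonu)=z_{\nu}'(\pnu)$, i.e.\ $\zeta=S(\pnu)$ a.e. Countability of the singular values of $e_{\nu}$ does not give this: the set $\{(t,x):\rhonu(t,x)=a\}$ for a single corner $a$ of $f_{\nu}$ can have positive measure, and on it $\pnu$ genuinely varies over $\partial f_{\nu}(a)$ while any limit of functions of $\rho^{\eps}$ alone cannot track $\pnu$. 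This is not a pathological case --- it is exactly the regime the proposition is used in (e.g.\ Proposition \ref{prop:w_gradient_control} with $z_{\nu}''=\chi_A$ and energies $f_{\nu}$ with corners, and the incompressible $f_0$ where $\{\rho=1\}$ carries a nonconstant pressure). The ambiguity enters the final identity through $\rhonu\zeta\big(\Delta\wnu+G(\pnu)\big)$, so it can be repaired by showing $\Delta\wnu+G(\pnu)=0$ a.e.\ on $\{\rhonu=a\}$ for each singular value $a>0$; but the natural proof of that fact uses the ODE $\partial_t\trhonu=\trhonu\big((\Delta\wnu)\circ X_{\nu}+G(\tpnu)\big)$ along characteristics and the vanishing of $\partial_t\trhonu$ a.e.\ on its level sets --- i.e.\ the Lagrangian structure you set aside. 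The paper sidesteps the whole issue by fixing the pressure-determined selection $\zeta_{\nu,\delta}=\big(e_{\nu,\delta}(\rhonu)+z'_{\nu,\delta}(\pnu)\big)/\rhonu$ from the outset and never invoking a chain rule, replacing it with the convexity inequality \eqref{eq:youngs_time_trick} and its reverse. As written, your proof is incomplete without this identification.
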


\begin{remark}
Note that $z_{\nu}\in W^{2,1}_{\loc}([0,\infty))$ is sufficient for the  the dissipation term \[
\int_0^{\infty}\!\!\! \int_{\RR^d}\psi z_{\nu}''(\wnu)|\nabla \wnu|^2 \dx x \dx t
\]
to be well-defined. The measure $|\nabla w_{\nu}(t,x)|^2\, \dx x \dx t$ gives no mass to sets of the form $\{(t,x): \wnu(t,x)\in B\}$ when $B\subset \RR$ has Lebesgue measure zero.  Since $z''_{\nu}(b)$ is defined for pointwise almost every $b\in \RR$, the product  $z_{\nu}''(\wnu)|\nabla \wnu|^2$ is well defined pointwise almost everywhere.  To see that $z_{\nu}''(\wnu)|\nabla \wnu|^2\in L^1_{\loc}([0,\infty)\times\RR^d)$, we can apply the coarea formula to write
 \[
\int_0^{\infty}\!\!\! \int_{\RR^d}\psi z_{\nu}''(\wnu)|\nabla \wnu|^2=\int_0^{\infty} z''_{\nu}(\beta) \eta(\beta) d\beta
 \]
 where $\eta:[0,\infty)\to\RR$ is given by 
\begin{equation}\label{eq:g_def}
\eta(\beta):= \iint_{\{(t,x):w(t,x)=\beta\}} \psi(t,x)\frac{ |\nabla \wnu(t,x)|^2 }{\sqrt{|\nabla \wnu(t,x)|^2+|\partial_t \wnu(t,x)|^2}} dH^{d-1}(x)\, dt.
 \end{equation}
  In the subsequent proof,  we will show that $\eta\in L^{\infty}_c([0,\infty))$, which is what is needed to conclude that $z_{\nu}''(\wnu)|\nabla \wnu|^2\in L^1_{\loc}([0,\infty)\times\RR^d)$.
\end{remark}


\medskip

\begin{proof} 

Since $\psi$ is compactly supported in time, there exists some $T>0$ such that $\spt(\psi)\subset [0,T]\times\RR^d$.
Since $\Delta w_{\nu}\in L^{\infty}([0,T]\times\RR^d)$, the Lagrangian flow
\[
\partial_t X_{\nu}(t,x)=-\nabla \wnu(t,X_{\nu}(t,x))
\]
is well-posed, invertible and $|\det(DX_{\nu})|+|\det(DX_{\nu})|^{-1}\in L^{\infty}([0,T]\times\RR^d)$ for all $T>0$.  Let us define

\[
\trhonu:=\rhonu\circ X_{\nu}, \quad  \twnu=\wnu\circ X_{\nu}, \quad \tpnu:=\pnu\circ X_{\nu}, \quad \tilde{\psi}_{\nu}=X^{-1}_{\nu\,\#}\psi.
\]
It will be useful to note that $\tilde{\psi}_{\nu}=\psi\circ X_{\nu} \det(DX_{\nu})$ and $X_{\nu\,\#}\tilde{\psi}_{\nu}=\psi$.
From the properties of $X_{\nu}$ and the $\rho_{\nu}$-equation, it follows that $\tilde{\rho}_\nu$ is a weak solution to
\begin{equation}\label{eq:rho_exponential}
\partial_t \trhonu=\trhonu\big((\Delta \wnu)\circ X_{\nu}+G(\tpnu)\big).
\end{equation}
Choosing some $\tau>0$ and averaging both sides of (\ref{eq:rho_exponential}) on the time interval $[t,t+\tau]$, we can deduce that
\begin{equation}\label{eq:time_average}
\frac{\sigma_{\tau} \trhonu-\trhonu}{\tau}=\frac{1}{\tau}\int^{t+\tau}_t\trhonu\big((\Delta \wnu)\circ X_{\nu}+G(\tpnu)\big),
\end{equation}
pointwise almost everywhere on $[0,\infty)\times\RR^d$,
where $\sigma_{\tau}$ is the forward time shift operator.

For each $\delta>0$ choose $Z_{\nu, \delta}\in C^{\infty}(\RR)$ such that $Z_{\nu,\delta}$ is nonnegative, compactly supported in $(0,\infty)$, and $Z_{\nu,\delta}$ converges to $z_{\nu}''$ in $L^1_{\loc}([0,\infty))$ as $\delta\to 0$.  We then define convex functions $z_{\nu,\delta}, e_{\nu,\delta}$ by taking
\[
z_{\nu,\delta}'(b):=\int_0^{b} Z_{\nu,\delta}(\beta)\dx\beta, \qquad
e_{\nu,\delta}(a):=a\int_{a_0/2}^a \frac{z'_{\nu,\delta}(f'_{\nu}(\alpha))}{\alpha^2}\, \dx\alpha,
\]
where $a_0>0$ is the value guaranteed by our assumptions on the internal energies (see Assumption \ref{as: energies} part 3).  
Applying  Lemma \ref{lem:e_from_z},  it follows that $e_{\nu,\delta}$ is convex, $e_{\nu}(0)=0$, and $e_{\nu,\delta}, z_{\nu,\delta}$ satisfy the coupling relation \eqref{eq:abc_equation}.  Furthermore, we have 
\[
\lambda (\rhonu-\min(a_0/2,\rhonu))\leq e_{\nu,\delta}(\rhonu)\leq \rhonu z'_{\nu,\delta}(\pnu)\left(\frac{2}{a_0}-\frac{1}{\max(\rhonu, a_0)}\right),
\]
where $\lambda=-\int_{0}^{a_0/2} \frac{z'_{\nu,\delta}(f'_{\nu}(\alpha))}{\alpha^2}\, \dx\alpha.$ Hence, $e_{\nu,\delta}(\rhonu)\in L^{\infty}([0,T]\times\RR^d)$ independently of $\delta$.

Define a new variable $\zeta_{\nu,\delta}:[0,\infty)\times\RR^d\to \RR$ by setting
\[
\zeta_{\nu,\delta}:=\begin{dcases}
\frac{e_{\nu,\delta}(\rhonu)+z'_{\nu,\delta}(\pnu)}{\rhonu} & \textup{if} \quad \rhonu\neq 0,\\
   -\int_{0}^{a_0/2} \frac{z'_{\nu,\delta}(f'_{\nu}(\alpha))}{\alpha^2}\, \dx\alpha & \textup{otherwise}.\\ 
\end{dcases}
\]
Since $z'_{\nu,\delta}(f'_{\nu}(\alpha))=0$ for all $\alpha$ sufficiently close to zero, $\rhonu, \pnu\in L^{\infty}([0,\infty)\times \RR^d)$ implies that $\zeta_{\nu,\delta}\in L^{\infty}([0,\infty)\times \RR^d)$.  Furthermore, combining the coupling relation \eqref{eq:abc_equation} with the definition of $\zeta_{\nu,\delta}$, it follows that $\zeta_{\nu,\delta}\in \partial e_{\nu,\delta}(\rhonu)$ almost everywhere.  

\smallskip

Define $\tilde{\zeta}_{\nu,\delta}:=\zeta_{\nu,\delta}\circ X_{\nu}$.
Since $\tilde{\zeta}_{\nu,\delta}\in L^{\infty}_{\loc}([0,\infty)\times \RR^d)$,  it is valid to integrate both sides of \eqref{eq:time_average} by $\tilde{\psi} \tilde{\zeta}_{\nu,\delta}$ on $[0,\infty)\times\RR^d$ to obtain the equality 
\begin{equation}\label{eq:transport_dissipation}
\int_0^{\infty}\!\!\! \int_{\RR^d}\frac{\sigma_{\tau} \trhonu-\trhonu}{\tau}\tilde{\zeta}_{\nu,\delta}\tilde{\psi}_{\nu}=\int_0^{\infty}\!\!\! \int_{\RR^d}\tilde{\zeta}_{\nu,\delta}\tilde{\psi}_{\nu}\frac{1}{\tau}\int^{t+\tau}_t\trhonu\big((\Delta \wnu)\circ X_{\nu}+G(\tpnu)\big).
\end{equation}
Using the convexity of $e_{\nu,\delta}$ and the subdifferential condition $\zeta_{\nu,\delta}\in \partial e_{\nu,\delta}(\rhonu)$, we have the inequality
\begin{equation}\label{eq:youngs_time_trick}
\int_0^{\infty}\!\!\! \int_{\RR^d}\frac{\sigma_{\tau} \trhonu-\trhonu}{\tau}\tilde{\zeta}_{\nu,\delta}\tilde{\psi}_{\nu}\leq \int_0^{\infty}\!\!\! \int_{\RR^d}\frac{\sigma_{\tau} e_{\nu,\delta}(\trhonu)-e_{\nu,\delta}(\trhonu)}{\tau}\tilde{\psi}_{\nu}.
\end{equation}
If we then change variables to move the time finite difference operator onto $\tilde{\psi}_{\nu}$, the right-hand-side of (\ref{eq:youngs_time_trick}) becomes
\[
-\frac{1}{\tau}\int_0^{\tau}\!\! \int_{\RR^d} e_{\nu,\delta}(\trhonu)\tilde{\psi}_{\nu}+ \int_\tau^{\infty}\!\!\! \int_{\RR^d}  e_{\nu,\delta}(\trhonu)\frac{\sigma_{-\tau}\tilde{\psi}_{\nu}-\tilde{\psi}_{\nu}}{\tau}.
\]
Combining this with \eqref{eq:transport_dissipation} and then sending $\tau\to 0$, we obtain
 \[
\int_0^{\infty}\!\!\! \int_{\RR^d}\tilde{\zeta}_{\nu,\delta}\tilde{\psi}_{\nu}\trhonu\big((\Delta \wnu)\circ X_{\nu}+G(\tpnu)\big)\leq -\int_{\RR^d}  e_{\nu,\delta}(\trhonu(0))\tilde{\psi}_{\nu}(0)+ \int_0^{\infty}\!\!\! \int_{\RR^d}  e_{\nu,\delta}(\trhonu)\partial_t \tilde{\psi}_{\nu}.
\]
Note that if we had integrated \eqref{eq:time_average} against $\sigma_{\tau}\big( \tilde{\zeta}_{\nu,\delta}\tilde{\psi}_{\nu}\big)$ instead of $\tilde{\zeta}_{\nu,\delta}\tilde{\psi}_{\nu}$, we would have obtained the opposite inequality.  Hence, we in fact have 
\[
\int_0^{\infty}\!\!\! \int_{\RR^d}\tilde{\zeta}_{\nu,\delta}\tilde{\psi}_{\nu}\trhonu \big((\Delta \wnu)\circ X_{\nu}+G(\tpnu)\big)= -\int_{\RR^d}  e_{\nu,\delta}(\trhonu(0))\tilde{\psi}_{\nu}(0)+ \int_0^{\infty}\!\!\! \int_{\RR^d}  e_{\nu,\delta}(\trhonu)\partial_t \tilde{\psi}_{\nu}.
\]
Pushing forward by $X_{\nu}$ on both sides and rearranging, the above is equivalent to \begin{equation}\label{eq:transport_dissipation_1}
\int_0^{\infty}\!\!\! \int_{\RR^d}\zeta_{\nu,\delta}\psi\rhonu \big(\Delta \wnu+G(\pnu)\big)+\int_{\RR^d}  e_{\nu,\delta}(\rhonu^{\mathrm{in}})\psi =\int_0^{\infty}\!\!\! \int_{\RR^d}  e_{\nu,\delta}(\rhonu)(\partial_t\tilde{\psi}_{\nu})\circ X^{-1}_{\nu}\det(DX_{\nu}^{-1}).
\end{equation}
Using the determinant formula for $\tilde{\psi}_{\nu}=X_{\nu\,\#}^{-1}\psi$, we can compute
\[
\partial_t \tilde{\psi}_\nu\circ X_{\nu}^{-1}=(\partial_t \psi-\nabla \psi\cdot \nabla \wnu-\psi \Delta \wnu)\det(DX_{\nu})\circ X_{\nu}^{-1}.
\]
Hence, using the fact that $\det(DX_{\nu})\circ X_{\nu}^{-1}=\det(DX^{-1}_{\nu})^{-1}$ it follows that
\[
\partial_t\tilde{\psi}_{\nu}\circ X^{-1}_{\nu}\det(DX_{\nu}^{-1})=\partial_t \psi-\nabla \psi\cdot \nabla \wnu-\psi\Delta \wnu.
\]
Applying this to \eqref{eq:transport_dissipation_1}, we get
\begin{equation*}\label{eq:almost_done_transport_dissipation}
    \int_0^{\infty}\!\!\! \int_{\RR^d} \psi\rhonu \zeta_{\nu,\delta}\big(\Delta \wnu+G(\pnu)\big)+\int_{\RR^d}  e_{\nu,\delta}(\rhonu^{\mathrm{in}})\psi= \int_0^{\infty}\!\!\! \int_{\RR^d} e_{\nu,\delta}(\rhonu)\big(\nabla \cdot (\psi \nabla \wnu)-\partial_t \psi\big).
\end{equation*}
Using the crucial identity $\rhonu \zeta_{\nu,\delta}=e_{\nu,\delta}(\rhonu)+z_{\nu,\delta}'(\pnu)$, we see that the previous line is equivalent to
\begin{equation*}\label{eq:almost_done_transport_dissipation_1}
    \int_0^{\infty}\!\!\! \int_{\RR^d}e_{\nu,\delta}(\rhonu)(\nabla \psi\cdot \nabla \wnu-\partial_t \psi) -\psi\big(e_{\nu,\delta}(\rhonu) +z_{\nu,\delta}'(\pnu)\big)G(\pnu)-\psi z_{\nu,\delta}'(\pnu)\Delta \wnu=\int_{\RR^d}  e_{\nu,\delta}(\rhonu^{\mathrm{in}})\psi.
\end{equation*}
Adding and subtracting $\psi z_{\nu,\delta}'(\wnu)\Delta \wnu$ and using $\nu\Delta\wnu=\wnu-\pnu$, we obtain 
\begin{align*}\label{eq:one_sided}
\int_0^{\infty}\!\!\! \int_{\RR^d}e_{\nu,\delta}(\rhonu)(\nabla \psi\cdot \nabla \wnu-\partial_t \psi)&+\psi \big(z_{\nu,\delta}'(\pnu)-z_{\nu,\delta}'(\wnu))\frac{\pnu-\wnu}{\nu}-\psi z_{\nu,\delta}'(\wnu)\Delta \wnu\\[0.3em]
&=\int_0^{\infty}\!\!\! \int_{\RR^d}\psi\big(e_{\nu,\delta}(\rhonu)+z'_{\nu,\delta}(\pnu)\big)G(\pnu)+\int_{\RR^d}  e_{\nu,\delta}(\rhonu^{\mathrm{in}})\psi.
\end{align*}
Rearranging and using $\Delta z_{\nu,\delta}(\wnu)=Z_{\nu,\delta}(\wnu)|\nabla \wnu|^2+z_{\nu,\delta}'(\wnu)\Delta \wnu$, this becomes
\begin{equation}\label{eq:edi_delta}
\begin{split}
 &\int_0^{\infty}\!\!\!\int_{\RR^d} {\psi \big(z_{\nu,\delta}'(\pnu
)-z_{\nu,\delta}'(\wnu)\big)\frac{\pnu-\wnu}{\nu}}+\psi Z_{\nu,\delta}(\wnu)|\nabla \wnu|^2  \\[0.3em]
&\qquad\qquad+\int_0^{\infty}\!\!\!\int_{\RR^d}  e_{\nu,\delta}(\rhonu)(\nabla \psi\cdot \nabla \wnu -\partial_t \psi)-z_{\nu,\delta}(\wnu)\Delta \psi\\[0.5em]
&= \int_{\RR^d}e_{\nu,\delta}(\rhonu^{\mathrm{in}}) \psi +\int_0^{\infty}\!\!\!\int_{\RR^d}  \psi \big(e_{\nu,\delta}(\rhonu)+z'_{\nu,\delta}(\pnu)\big)G(\pnu),
\end{split}
\end{equation} 
which is \eqref{eq:eee} except that $e_{\nu}$ and $z_{\nu}$ are replaced by $e_{\nu,\delta}$ and $z_{\nu,\delta}$ and $z''_{\nu}$ is replaced by $Z_{\nu,\delta}$.

Taking $\delta\to 0$, it is clear that $z_{\nu,\delta}'$ converges uniformly on compact subsets to $z_{\nu}'$ and $e_{\nu,\delta}$ converges uniformly on compact subsets to a convex function $\bar{e}_{\nu}$.  Since the coupling relation \eqref{eq:abc_equation} holds for all $\delta>0$, it follows that $z_{\nu}'$ and $\bar{e}_{\nu}$ must also satisfy the coupling relation.  By Lemma \ref{lem:e_from_z}, this implies that $\bar{e}_{\nu}(a)=e_{\nu}(a)+\lambda a$ for some fixed $\lambda\in  \RR$.   Passing to the limit $\delta\to 0$ in \eqref{eq:edi_delta}, and using the fact that the equation is invariant under linear shifts of $e_{\nu}$, we can conclude that
\begin{equation}\label{eq:edi_delta_limit}
\begin{split}
 &\lim_{\delta \to 0}\int_0^{\infty}\!\!\!\int_{\RR^d} \psi Z_{\nu,\delta}(\wnu)|\nabla \wnu|^2   \\[0.1em]
&+\int_0^{\infty}\!\!\!\int_{\RR^d}  e_{\nu}(\rhonu)(\nabla \wnu\cdot \nabla \psi -\partial_t \psi)-z_{\nu}(\wnu)\Delta \psi+{\psi \big(z_{\nu}'(\pnu
)-z_{\nu}'(\wnu)\big)\frac{\pnu-\wnu}{\nu}} \\[0.5em]
&= \int_{\RR^d} \psi e_{\nu}(\rhonu^{\mathrm{in}})+\int_0^{\infty}\!\!\!\int_{\RR^d}  \psi \big(e_{\nu}(\rhonu)+z'_{\nu}(\pnu)\big)G(\pnu).
\end{split}
\end{equation} 
By the coarea formula, 
\[
\int_0^{\infty}\!\!\!\int_{\RR^d} \psi Z_{\nu,\delta}(\wnu)|\nabla \wnu|^2=\int_0^{\infty} Z_{\nu,\delta}(\beta)\eta(\beta)\,d\beta,
\]
where $\eta(\beta)\geq 0$ is given by \eqref{eq:g_def}.  
If $\beta$ is a regular value of $w_{\nu}$, then $\{(t,x):w(t,x)=\beta\}=\partial \{(t,x):w(t,x)\leq \beta\}$, and we can use integration by parts to write
\begin{equation}\label{eq:g_by_parts}
\eta(\beta)\leq \int_0^{\infty}\!\!\! \int_{\RR^d} \chi_{\beta}(w_{\nu})\nabla \cdot (\psi\nabla \wnu),
\end{equation}  
where $\chi_{\beta}$ is the characteristic function of $[0,\beta]$.    Since singular values of $w_{\nu}$ have Lebesgue measure zero, we can combine equation (\ref{eq:g_by_parts}) with the control $\wnu, \nabla \wnu, \Delta w_{\nu}\in L^{\infty}_{\loc}([0,\infty);L^{\infty}(\RR^d))$ to conclude that $\eta\in L_c^{\infty}([0,\infty))$. 
  
Finally, since $Z_{\nu,\delta}$ converges in $L^1_{\loc}([0,\infty))$ to $z_{\nu}''$, and $\eta\in L^{\infty}_c([0,\infty))$, it follows that
\[
\lim_{\delta\to 0}\int_0^{\infty} Z_{\nu,\delta}(\beta)\eta(\beta)\,d\beta=\int_0^{\infty} z_{\nu}''(\beta) \eta(\beta)=\int_0^{\infty}\!\!\! \int_{\RR^d} \psi z_{\nu}''(\wnu)|\nabla \wnu|^2,
\]
and we are done.
\end{proof}

\medskip

The following Corollary establishing dissipation of the total energy $\int_{\RR^d} e_{\nu}(\rhonu)$, is one of the most useful consequences of the energy evolution equation.
\begin{cor}[Energy dissipation]\label{cor:dissipation}
    Suppose that $e_{\nu}:\dom(f_{\nu})\to\RR$ and $z_{\nu}:[0,\infty)\to\RR$  satisfy the assumptions of Proposition \ref{prop:eee}.  If $\liminf_{a\to 0} \frac{e_{\nu}(a)}{a}\neq -\infty$, then for any time $T\geq 0$ and any $\eta\in W^{1,\infty}([0,T])$,
\begin{equation}\label{eq:edi_T}
\begin{split}
&\int_{\RR^d}  \eta(T) e_{\nu}(\rhonu(T))+\int_0^T\!\!\!\int_{\RR^d} { \eta\big(z_{\nu}'(\pnu
)-z_{\nu}'(\wnu)\big)\frac{\pnu-\wnu}{\nu}}+ \eta z_{\nu}''(\wnu)|\nabla \wnu|^2-e_{\nu}(\rhonu)\partial_t \eta  \\[0.1em]
&=  \int_{\RR^d}  \eta(0) e_{\nu}(\rhonu^{\mathrm{in}})+\int_0^T\!\!\!\int_{\RR^d} \eta G(\pnu)\big(e_{\nu}(\rhonu)+z_{\nu}'(\pnu)\big).
\end{split}
\end{equation}
\end{cor}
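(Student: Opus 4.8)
The plan is to read the energy evolution equation \eqref{eq:eee} against test functions of the product form $\psi(t,x)=\theta(t)\phi_R(x)$, with $\theta\in C_c^\infty([0,\infty))$ and $\phi_R\in C_c^\infty(\RR^d)$ a standard spatial cutoff ($\phi_R\equiv 1$ on $B_R$, $\spt\phi_R\subset B_{2R}$, $|\nabla\phi_R|\le CR^{-1}$, $|\Delta\phi_R|\le CR^{-2}$), to send $R\to\infty$ so that the localization errors disappear, and then to upgrade the resulting distributional-in-time identity to \eqref{eq:edi_T} via the fundamental theorem of calculus. Throughout, $\nu>0$ is fixed, so all constants below are allowed to depend on $\nu$.

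First I would record the a priori integrability facts that make the $R\to\infty$ passage legitimate. The extra hypothesis $\liminf_{a\to0}e_\nu(a)/a\neq-\infty$, together with convexity, $e_\nu(0)=0$ (so $a\mapsto e_\nu(a)/a$ is nondecreasing) and the uniform $L^\infty$ bound on $\rhonu$, gives $-C\rhonu\le e_\nu(\rhonu)\le C\rhonu$, hence $e_\nu(\rhonu)\in L^\infty_{\loc}([0,\infty);L^1(\RR^d))$ by the mass bound of Section~\ref{sec: a priori}. Likewise $0\le z_\nu(\wnu)\le C\wnu$ (using $z_\nu(0)=z_\nu'(0)=0$ and continuity of $z_\nu'$ on the bounded range of $\wnu$), and $\wnu\in L^1$ since it is the Yukawa-type convolution $(I-\nu\Delta)^{-1}\pnu$ of the pressure, which by Section~\ref{sec: a priori} decays at spatial infinity and lies in $L^\infty_{\loc}([0,\infty);L^1(\RR^d))$; thus $z_\nu(\wnu)\in L^\infty_{\loc}([0,\infty);L^1(\RR^d))$ too. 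Finally the full dissipation density $\mathcal{L}_z$ is in $L^1([0,T]\times\RR^d)$ for every $T$: the friction term, being nonnegative, is bounded by $\tfrac{2}{\nu}\,\|z_\nu'\|_{L^\infty([0,\|\pnu\|_\infty])}\,|\pnu-\wnu|$ with $|\pnu-\wnu|=\nu|\Delta\wnu|\in L^1$, and for $z_\nu''(\wnu)|\nabla\wnu|^2$ I would rerun the coarea identity from the proof of Proposition~\ref{prop:eee} with the constant test function $1$ in place of $\psi$: integration by parts over the (bounded) superlevel sets of $\wnu$, together with $\nu\Delta\wnu=\wnu-\pnu$, bounds the coarea density by $\nu^{-1}\|\wnu-\pnu\|_{L^\infty_tL^1_x}$ uniformly over a compact range of levels, so $z_\nu''(\wnu)|\nabla\wnu|^2\in L^1$ because $z_\nu''\in L^1_{\loc}([0,\infty))$. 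This last point, where the possibly non-$C^1$ nature of $e_\nu$ (routed through $z_\nu\in W^{2,1}_{\loc}$) genuinely has to be controlled, is the step I expect to be the main obstacle.

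With these estimates in hand, I would plug $\psi=\theta\phi_R$ into \eqref{eq:eee} and let $R\to\infty$. The two cutoff-error terms $\iint e_\nu(\rhonu)\theta\,\nabla\wnu\cdot\nabla\phi_R$ and $\iint z_\nu(\wnu)\theta\,\Delta\phi_R$ are bounded by $CR^{-1}$ times tail integrals of $L^1$ (respectively, via Cauchy--Schwarz with $\nabla\wnu\in L^2_{\loc}([0,\infty);L^2(\RR^d))$, of $L^1\times L^2$) quantities, hence vanish, while all remaining terms converge by dominated convergence (splitting $e_\nu(\rhonu)=(e_\nu(\rhonu)+C\rhonu)-C\rhonu$ to handle the sign). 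Writing $m(t):=\int_{\RR^d}e_\nu(\rhonu(t,x))\,dx$, the outcome is the distributional identity
\[
-\int_0^\infty \theta'(t)\,m(t)\,dt=-\int_0^\infty\!\!\!\int_{\RR^d}\theta\,\mathcal{L}_z+\int_0^\infty\!\!\!\int_{\RR^d}\theta\,G(\pnu)\big(e_\nu(\rhonu)+z_\nu'(\pnu)\big)+\theta(0)\int_{\RR^d}e_\nu(\rhonu^{\mathrm{in}}),
\]
valid for every $\theta\in C_c^\infty([0,\infty))$. Since its right-hand side is an $L^1_{\loc}([0,\infty))$ function of $t$ plus an initial Dirac mass, $m$ is absolutely continuous on $[0,\infty)$ with $m(0)=\int e_\nu(\rhonu^{\mathrm{in}})$ and $m'(t)=-\int_{\RR^d}\mathcal{L}_z(t,\cdot)+\int_{\RR^d}G(\pnu)\big(e_\nu(\rhonu)+z_\nu'(\pnu)\big)(t,\cdot)$ for a.e.\ $t$. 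Finally, for $\eta\in W^{1,\infty}([0,T])$ one integrates $\tfrac{d}{dt}\big(\eta(t)m(t)\big)=\eta'(t)m(t)+\eta(t)m'(t)$ over $[0,T]$, and rearranging produces exactly \eqref{eq:edi_T}.
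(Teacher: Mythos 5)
Your outline --- spatial cutoff, pass to the limit in \eqref{eq:eee}, then upgrade the resulting distributional-in-time identity to \eqref{eq:edi_T} --- is the same as the paper's, and your endpoint argument via absolute continuity of $m(t)$ is a legitimate substitute for the paper's ramp function $\omega_\tau$. However, there is a genuine gap in your integrability package. You justify $z_{\nu}(\wnu)\in L^1$ (which you do need, since the error term $\iint z_{\nu}(\wnu)\theta\Delta\phi_R$ is supported on an annulus of volume $\sim R^d$ and an $L^\infty$ bound alone is useless for $d>2$) by asserting that $\pnu$, hence $\wnu=(I-\nu\Delta)^{-1}\pnu$, lies in $L^{\infty}_{\loc}([0,\infty);L^1(\RR^d))$ ``by Section~\ref{sec: a priori}''. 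The paper proves no such thing: Lemma~\ref{lem:p_rho_bounds} gives $\rhonu\in L^\infty_t L^1_x$ and $\pnu\in L^\infty$, but $\pnu$ need not be integrable. For instance $f_\nu(a)=\gamma^{-1}a^{\gamma}$ with $1<\gamma<2$ is admissible under Assumption~\ref{as: energies}, and then $\pnu=\rhonu^{\gamma-1}$ can fail to lie in $L^1(\RR^d)$ even though $\rhonu$ does (take $\rhonu\sim(1+|x|)^{-2d}$). The same unproven claim underlies your a priori bound on the friction term (you need $|\pnu-\wnu|=\nu|\Delta\wnu|\in L^1(\RR^d)$) and your coarea bound for $z''_\nu(\wnu)|\nabla\wnu|^2$ with $\psi\equiv 1$, so the whole ``main obstacle'' step as you have written it rests on a false premise.

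The paper circumvents both issues, and you should too. For $z_\nu(\wnu)$ it uses convexity of $z_\nu$ together with the fact that $(I-\nu\Delta)^{-1}$ averages against a probability kernel, so Jensen gives $\norm{z_\nu(\wnu)}_{L^1}\le\norm{z_\nu(\pnu)}_{L^1}$; and $z_\nu(\pnu)\in L^1$ is then extracted from the coupling relation \eqref{eq:abc_equation}, which yields $0\le z'_\nu(\pnu)\le|e_\nu(\rhonu)|+\rhonu e'_\nu(a)\lesssim\rhonu$ on $\{\rhonu\le a\}$ while the complement $\{\rhonu>a\}$ has finite measure --- so everything is controlled by $\rhonu\in L^1$ rather than by any integrability of $\pnu$. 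For the dissipation $\mathcal{L}_z$, no a priori $L^1$ bound is needed at all: both of its terms are nonnegative, so taking the spatial cutoffs to be increasing in $R$ and applying the monotone convergence theorem suffices (finiteness then falls out of the identity a posteriori, which also gives the $L^1_{\loc}$-in-$t$ right-hand side your absolute-continuity step requires). With these two replacements the rest of your argument goes through.
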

\begin{proof}
Let $\vp\in C^{\infty}_c(\RR)$ and $\zeta\in C^{\infty}_c([0,\infty))$ be nonnegative functions such that $\vp(r)=1$ for $|r|\leq 1$, $\vp(r)=0$ for $|r|\geq 2$. For each $k\in \ZZ_+$, define $\psi_k:[0,\infty)\times\RR^d\to \RR$ by setting $\psi_k(t,x):=\vp(|x|/k)\zeta(t)$. It then follows that $\psi_k$ is an increasing sequence of nonnegative functions, such that $\psi_k\in C_c^{\infty}([0,\infty)\times\RR^d)$ for all $k$.  Thus, \eqref{eq:eee} must hold for each $\psi_k$. It is clear that as $k\to\infty$, we have the pointwise almost everywhere convergence of $\psi_k, \partial_t \psi_k, \nabla \psi_k, \Delta \psi_k$ to $\zeta, \partial_t \zeta$, zero, and zero respectively. However,  in order to pass to the limit $k\to\infty$ in \eqref{eq:eee}, we will need to show that various quantities belong to $L^1$. 

\medskip

First, the control $\liminf_{a\to 0} \frac{e_{\nu}(a)}{a}\neq -\infty$ together with the $L^1$ bounds on $\rhonu$ and $L^{\infty}$ bounds on $e_{\nu}$ implies that $e_{\nu}(\rhonu)$ belongs to $L^1([0,T]\times\RR^d)$. Next, since $z_{\nu}'\in W^{1,1}_{\loc}([0,\infty))$, the $L^{\infty}$ bounds on $\pnu$ imply that $z_{\nu}(\pnu), z_{\nu}'(\pnu) \in L^{\infty}([0,T]\times\RR^d)$.  If we choose a value $a\in \dom(f_{\nu})\cap (0,\infty)$, such that $e_{\nu}$ is differentiable at $a$, then on the set $\{\rhonu\leq a\}$, we have $0\leq z'(\pnu)\leq |e_{\nu}(\rhonu)|+\rhonu e'_{\nu}(a)$.  Thus, it follows that $z'_{\nu}(\pnu), z_{\nu}(\pnu)\in L^1([0,T]\times\RR^d)$.  Finally, the convexity of $z_{\nu}$ together with the elliptic equation $\wnu-\nu\Delta \wnu=\pnu$ implies that $\norm{z_{\nu}(\wnu)}_{L^1([0,T]\times \RR^d)}\leq \norm{z_{\nu}(\pnu)}_{L^1([0,T]\times \RR^d)}$.

\medskip

Now we can pass to the limit $k\to\infty$ in all of the terms in   \eqref{eq:eee}, either by combining the $L^1$ bounds proven above with the dominated convergence theorem or by applying the monotone convergence theorem to the nonnegative terms.   Hence, we have
\begin{equation}
\begin{split}
&\int_0^{\infty}\!\!\! \int_{\RR^d}{\zeta \big(z_{\nu}'(\pnu
)-z_{\nu}'(\wnu)\big)\frac{\pnu-\wnu}{\nu}}+\zeta z_{\nu}''(\wnu)|\nabla \wnu|^2 -e_{\nu}(\rhonu)\partial_t \zeta \\[0.5em]
&= \int_{\RR^d} \zeta e_{\nu}(\rhonu^{\mathrm{in}})+\int_0^{\infty}\!\!\! \int_{\RR^d}  \zeta \big(e_{\nu}(\rhonu)+z'_{\nu}(\pnu)\big)G(\pnu).
\end{split}
\end{equation}
By approximation, it is clear that the above holds for any $\eta\in W^{1,\infty}_c([0,\infty))$.
Finally, if we choose some $\tau>0$ and define $\zeta(t):=\eta(t)\omega_{\tau}(t)$ where 
\[
\omega_{\tau}(t):=\begin{cases}
     1 & \textup{if}\; t\leq T-\tau,\\
     1-\frac{t+\tau-T}{\tau} & \textup{if} \; t\in [T-\tau,T],\\
     0 & \textup{otherwise,}
\end{cases}
\]
then we have
\begin{equation*}
\begin{split}
&\int_0^\infty\!\!\!\int_{\RR^d}{\eta\omega_{\tau} \big(z_{\nu}'(\pnu
)-z_{\nu}'(\wnu)\big)\frac{\pnu-\wnu}{\nu}}+\eta \omega_{\tau} z_{\nu}''(\wnu)|\nabla \wnu|^2-e_{\nu}(\rhonu)\omega_{\tau}\partial_t \eta +\frac{1}{\tau}\int_{T-\tau}^T\!\!\int_{\RR^d}\eta e_{\nu}(\rhonu) \\[0.5em]
&= \int_{\RR^d} \eta(0) e_{\nu}(\rhonu^{\mathrm{in}})+\int_0^\infty\!\!\!\int_{\RR^d} \eta\omega_{\tau} \big(e_{\nu}(\rhonu)+z'_{\nu}(\pnu)\big)G(\pnu).
\end{split}
\end{equation*}
Sending $\tau\to 0$, we get the desired result. 
\end{proof}

\subsection{Applications of energy dissipation}

We now illustrate applications of Proposition \ref{prop:eee} and Corollary \ref{cor:dissipation} where we make specific choices for $e_{\nu}$ and $z_{\nu}$. 
First, we recall some properties of Darcy's law and its relation to gradient flows.
When $G=0$, it is by now well-known (see Otto \cite{otto_pme})  that \eqref{eq:limit} is the gradient flow of the energy
\[
F_{0}(\rho):=\int_{\RR^d} f_0(\rho) \dx x,
\]
with respect to the $W_2$ distance. It is another classical fact that \eqref{eq:limit} is also a gradient flow (again with $G=0$) in the $H^{-1}$ distance, of the energy
\[
H_0(\rho):=\int_{\RR^d} h_0(\rho) \dx x, \qquad \hbox{ where } \,\, h_0(a)=af_0(a)-2\int_0^a f_0(\alpha) d\alpha.
\]  
In both cases, the gradient flow structure can be used to establish a dissipation relation for solutions of \eqref{eq:limit}, between the rate of change in the energy and the "kinetic work" generated by the evolution of the density in the given metric spaces, 
 with a correction term due to the presence of $G$ (see for instance \cite{jkt21}). The next two Corollaries show that one has analogues of these dissipation relations for Brinkman's flow \eqref{eq: rho} when $\nu>0$. In Section \ref{sec: strong comp}, the $H^{-1}$ dissipation relation, Corollary \ref{cor:h-1}, will play a key role in our convergence argument as we explain in the remarks below.

\begin{cor}[Internal energy dissipation]\label{cor:ede}
For any $T>0$ and $\eta\in W^{1,\infty}([0,T])$,
\begin{equation}
\begin{split}
    \label{eq:edi f}
\int_{\RR^d} \eta(T) f_{\nu}(\rhonu(T)) + &\int_0^T \!\!\!\int_{\RR^d} \eta f^{*\, \prime}_{\nu}(\wnu) |\nabla \wnu|^2+\eta (f^*_{\nu}(\pnu)-f^*_{\nu}(\wnu))\frac{\pnu-\wnu}{\nu}-f_{\nu}(\rhonu)\partial_t \eta\\
&\qquad
= \int_0^T\!\!\! \int_{\RR^d} \eta\rhonu\pnu G(p_\nu)+\int_{\RR^d} \eta(0) f_{\nu}(\rhonu^{\mathrm{in}}).
\end{split}
\end{equation}
\end{cor}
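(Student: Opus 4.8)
The plan is to read off \eqref{eq:edi f} from Corollary~\ref{cor:dissipation} by making the choice $e_\nu:=f_\nu$. First I would check that $f_\nu$ is an admissible energy for Proposition~\ref{prop:eee} and Corollary~\ref{cor:dissipation}. Convexity and $f_\nu:\dom(f_\nu)\to\RR$ are built into Assumption~\ref{as: energies}; the normalization $e_\nu(0)=0$ follows since Assumption~\ref{as: energies}\eqref{as: limit/a} forces the right-difference quotient $a^{-1}f_\nu(a)\to 0$ as $a\to 0^+$, hence $f'_{\nu,+}(0)=0$, hence $f_\nu\ge 0$ on $[0,\infty)$ with $f_\nu(0)=0$. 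The compatibility relation \eqref{eq:e_compatbility} holds trivially with $S=\mathrm{id}$, which is nondecreasing and lies in $W^{1,1}_{\loc}([0,\infty))$, because $\partial e_\nu(a)=\partial f_\nu(a)$. Finally the extra hypothesis of Corollary~\ref{cor:dissipation}, namely $\liminf_{a\to 0}e_\nu(a)/a\neq -\infty$, is again just $\lim_{a\to 0^+}f_\nu(a)/a=0$ from Assumption~\ref{as: energies}\eqref{as: limit/a}.

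Next I would identify the convex function $z_\nu$ that Lemma~\ref{lem:z_from_e} associates to $e_\nu=f_\nu$ with $S=\mathrm{id}$, namely $z_\nu(0)=0$ and $z_\nu'(b)=\int_0^b f^{*\prime}_{\nu}(\beta)\,d\beta$. Because $f^*_\nu$ is finite-valued and convex on $[0,\infty)$, it is locally absolutely continuous up to the endpoint $0$ (its one-sided derivatives there are finite, using $f^*_\nu\ge f^*_\nu(0)$), so the fundamental theorem of calculus gives $z_\nu'(b)=f^*_\nu(b)-f^*_\nu(0)$. Since $\min f_\nu=f_\nu(0)=0$ we have $f^*_\nu(0)=-\inf f_\nu=0$, and therefore $z_\nu'=f^*_\nu$ and $z_\nu''=f^{*\prime}_{\nu}$ (the a.e.\ derivative, which lies in $L^1_{\loc}$ because it is monotone and locally bounded). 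In particular $z_\nu$ is convex, $z_\nu\in W^{2,1}_{\loc}([0,\infty))$, and $z_\nu(0)=z_\nu'(0)=0$, so every hypothesis of Corollary~\ref{cor:dissipation} is in place.

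It then only remains to substitute into \eqref{eq:edi_T}. One has $z_\nu''(\wnu)|\nabla\wnu|^2=f^{*\prime}_{\nu}(\wnu)|\nabla\wnu|^2$ and $z_\nu'(\pnu)-z_\nu'(\wnu)=f^*_\nu(\pnu)-f^*_\nu(\wnu)$, which are exactly the dissipation terms appearing in \eqref{eq:edi f}. On the right-hand side, the Fenchel--Young equality $f_\nu(\rhonu)+f^*_\nu(\pnu)=\rhonu\pnu$ --- valid precisely because $\pnu\in\partial f_\nu(\rhonu)$ --- converts $e_\nu(\rhonu)+z_\nu'(\pnu)$ into $\rhonu\pnu$, so the growth term becomes $\int_0^T\!\!\int_{\RR^d}\eta\,\rhonu\pnu\,G(\pnu)$. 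Rearranging yields precisely \eqref{eq:edi f}. The integrability requirements needed to make sense of \eqref{eq:edi_T}, in particular $f_\nu(\rhonu)\in L^1$ and $f^*_\nu(\pnu),f^*_\nu(\wnu)\in L^\infty$, are already supplied inside Corollary~\ref{cor:dissipation} once $\liminf_{a\to 0}f_\nu(a)/a\neq -\infty$ is known, together with the $L^1\cap L^\infty$ bounds on $\rhonu$ and $L^\infty$ bounds on $\pnu,\wnu$ from the definition of weak solution.

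I expect the only point requiring genuine care to be the identification $z_\nu'=f^*_\nu$: one must verify that $f^*_\nu$ is absolutely continuous on $[0,\infty)$ up to the endpoint, so that $\int_0^b f^{*\prime}_{\nu}=f^*_\nu(b)-f^*_\nu(0)$, and that $f^*_\nu(0)=0$ (equivalently $\min f_\nu=0$, which comes from $f'_{\nu,+}(0)=0$). Once this is settled, the rest is bookkeeping against Proposition~\ref{prop:eee} and Corollary~\ref{cor:dissipation}.
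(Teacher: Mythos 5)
Your proposal is correct and follows essentially the same route as the paper: choose $e_\nu=f_\nu$ and $z_\nu'=f^*_\nu$, note that the coupling relation \eqref{eq:abc_equation} is just the Fenchel--Young equality, and read the statement off from Corollary~\ref{cor:dissipation} after rewriting $f_\nu(\rhonu)+f^*_\nu(\pnu)=\rhonu\pnu$. You are somewhat more careful than the paper in verifying the normalizations $f_\nu(0)=f^*_\nu(0)=0$ and the hypothesis $\liminf_{a\to 0}e_\nu(a)/a\neq-\infty$, but the argument is the same.
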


\begin{proof}
   Choose $e_{\nu}=f_{\nu}$ and $z'_{\nu}=f^*_{\nu}$.  The compatibility condition  \eqref{eq:abc_equation} follows immediately from the definition of the Legendre transform. The above equation is then precisely \eqref{eq:edi_T} for $f_{\nu}$ and $f_{\nu}^*$, except we have used the pressure-density duality relation to rewrite $\big(f_{\nu}(\rhonu)+f^*_{\nu}(\pnu)\big) G(p_\nu)=\rhonu\pnu G(p_\nu)$.
\end{proof}

\begin{remark}\label{rmk:DEI}
    In the setting of \eqref{eq:limit}, the kinetic work in the $W_2$ distance is the space-time integral of $\rho |\nabla p|^2$. This is replaced by the integral of $ f^{*\, \prime}_{\nu}(\wnu)|\nabla \wnu|^2$ for Brinkman's law. Although we do not use this relation in our convergence arguments, we believe it provides an important philosophical link between the behavior of Brinkman's law and the behavior of Darcy's law.
\end{remark}

\begin{cor}[$H^{-1}$ energy dissipation]\label{cor:h-1}
Let $h_{\nu}:\dom(f_{\nu})\to \RR$  be given by 
\[
h_{\nu}(a):=af_{\nu}(a)-2\int_0^a f_{\nu}(\alpha)\, d\alpha.
\]
For any $T>0$ and any $\eta\in W^{1,\infty}([0,T])$,
\begin{equation}
\begin{split}
    \label{eq:edi h}
\int_{\RR^d} \eta(T) h_{\nu}(\rhonu(T)) + &\int_0^T \!\!\!\int_{\RR^d}  \eta |\nabla f_\nu^*(\wnu)|^2+\eta\big(h^*_{\nu}(f^*_{\nu}(\pnu))-h^*_{\nu}(f^*_{\nu}(\wnu))\big)\frac{\pnu-\wnu}{\nu}-h_{\nu}(\rhonu)\partial_t \eta\\
&\qquad
= \int_0^T\!\!\! \int_{\RR^d} \eta\rhonu f^*_{\nu}(\pnu)G(p_\nu)+\int_{\RR^d} \eta h_{\nu}(\rhonu^{\mathrm{in}}).
\end{split}
\end{equation}
\end{cor}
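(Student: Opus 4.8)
The plan is to mimic the proof of Corollary~\ref{cor:ede}: we specialize the general energy dissipation identity~\eqref{eq:edi_T} of Corollary~\ref{cor:dissipation} to the choice $e_{\nu}=h_{\nu}$, and then identify the accompanying convex function $z_{\nu}$ of the pressure explicitly. The starting point is the elementary identity, obtained by differentiating $h_{\nu}$, that $h_{\nu}'(a)=af_{\nu}'(a)-f_{\nu}(a)=f^{*}_{\nu}(f_{\nu}'(a))$, the last equality being the defining property of the Legendre transform; hence $h_{\nu}''(a)=af_{\nu}''(a)\ge 0$, so $h_{\nu}$ is convex on $[0,\infty)$ with $h_{\nu}(0)=0$, and more precisely $\partial h_{\nu}(a)=\{f^{*}_{\nu}(b):b\in\partial f_{\nu}(a)\}$. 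This is exactly the compatibility hypothesis~\eqref{eq:e_compatbility} of Lemma~\ref{lem:z_from_e} with the nondecreasing function $S=f^{*}_{\nu}\in W^{1,1}_{\loc}([0,\infty))$, so the lemma furnishes a convex $z_{\nu}\in W^{2,1}_{\loc}([0,\infty))$ with $z_{\nu}(0)=z_{\nu}'(0)=0$, $z_{\nu}'(b)=\int_0^b\big(f^{*\prime}_{\nu}(\beta)\big)^2\,d\beta$, satisfying the coupling relation~\eqref{eq:abc_equation} with $h_{\nu}$. In particular $z_{\nu}''=\big(f^{*\prime}_{\nu}\big)^2$, so by the chain rule for $f^{*}_{\nu}(\wnu)$ --- legitimate because $|\nabla\wnu|^2\,\dx x\,\dx t$ charges no set of the form $\{\wnu\in B\}$ with $|B|=0$ --- the dissipation term $z_{\nu}''(\wnu)|\nabla\wnu|^2$ coincides with $|\nabla f^{*}_{\nu}(\wnu)|^2$, the first new term in~\eqref{eq:edi h}.

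Next I would match the remaining terms by identifying $z_{\nu}'=h^{*}_{\nu}\circ f^{*}_{\nu}$. Both sides vanish at $b=0$ (using $f_{\nu}(0)=0$ from Assumption~\ref{as: energies}, which forces $f^{*}_{\nu}(0)=0$, and $h_{\nu}(0)=0$ together with $h_{\nu}'\ge 0$, which forces $h^{*}_{\nu}(0)=0$); and writing $b=f_{\nu}'(a)$ one has $h^{*\prime}_{\nu}\big(f^{*}_{\nu}(b)\big)=h^{*\prime}_{\nu}\big(h_{\nu}'(a)\big)=a=f^{*\prime}_{\nu}(b)$, whence $\frac{d}{db}\,h^{*}_{\nu}\big(f^{*}_{\nu}(b)\big)=\big(f^{*\prime}_{\nu}(b)\big)^2=\frac{d}{db}\,z_{\nu}'(b)$, so $z_{\nu}'=h^{*}_{\nu}\circ f^{*}_{\nu}$. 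Consequently the second new term in~\eqref{eq:edi h} equals $\big(z_{\nu}'(\pnu)-z_{\nu}'(\wnu)\big)\frac{\pnu-\wnu}{\nu}$, and the source term simplifies: $e_{\nu}(\rhonu)+z_{\nu}'(\pnu)=h_{\nu}(\rhonu)+h^{*}_{\nu}\big(f^{*}_{\nu}(\pnu)\big)=\rhonu f^{*}_{\nu}(\pnu)$ by Young's equality, since $f^{*}_{\nu}(\pnu)=h_{\nu}'(\rhonu)\in\partial h_{\nu}(\rhonu)$ by~\eqref{eq:e_compatbility}. Finally, $h_{\nu}(a)/a=f_{\nu}(a)-\tfrac{2}{a}\int_0^a f_{\nu}\to 0$ as $a\to 0^+$ by continuity of $f_{\nu}$ on $[0,a_0]$ and $f_{\nu}(0)=0$, so the hypothesis $\liminf_{a\to 0}h_{\nu}(a)/a\ne-\infty$ of Corollary~\ref{cor:dissipation} holds, and~\eqref{eq:edi_T} applied to the pair $(h_{\nu},z_{\nu})$ is precisely~\eqref{eq:edi h}.

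The main obstacle is the bookkeeping forced by the lack of smoothness: $f_{\nu}$ need not be $C^1$ and $f^{*}_{\nu}$ need not be $C^1$, so the identity $\partial h_{\nu}(a)=\{f^{*}_{\nu}(b):b\in\partial f_{\nu}(a)\}$ and the two chain-rule computations (the one producing $|\nabla f^{*}_{\nu}(\wnu)|^2$ and the derivative of $h^{*}_{\nu}\circ f^{*}_{\nu}$) must be read at the level of $W^{2,1}_{\loc}$ functions and pointwise-a.e.\ values, relying on the fact that $\wnu$ maps Lebesgue-null sets to Lebesgue-null sets. This is the same care already exercised in Lemma~\ref{lem:z_from_e} and Proposition~\ref{prop:eee}, so it is routine; everything else is a direct application of Corollary~\ref{cor:dissipation}.
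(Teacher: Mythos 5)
Your proposal is correct and follows essentially the same route as the paper: both specialize the general dissipation identity \eqref{eq:edi_T} to the pair $e_{\nu}=h_{\nu}$, $z_{\nu}'=h_{\nu}^*\circ f_{\nu}^*$, with the coupling relation \eqref{eq:abc_equation} verified via the Legendre identity $ab-f_{\nu}(a)=f_{\nu}^*(b)\in\partial h_{\nu}(a)$ and the source term simplified by Young's equality. Your detour through Lemma~\ref{lem:z_from_e} with $S=f_{\nu}^*$ is a slightly longer way to the same $z_{\nu}$, but it has the merit of making explicit both that $z_{\nu}''=(f_{\nu}^{*\prime})^2$ (hence that the dissipation term is $|\nabla f_{\nu}^*(\wnu)|^2$) and that the hypothesis $\liminf_{a\to 0}h_{\nu}(a)/a\neq-\infty$ of Corollary~\ref{cor:dissipation} holds, two points the paper's proof leaves implicit.
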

\begin{proof}
   Set $e_{\nu}(a)=h_{\nu}(a)$ and $z'_{\nu}(b)=h_{\nu}^*(f_{\nu}^*(b))$. It is then clear that $e_{\nu}(0)=0$ and $z'_{\nu}(0)=0$. If $b\in \partial f_{\nu}(a)$, then from the definition of $h_\nu$, we have $c:=ab-f_{\nu}(a)\in \partial h_{\nu}(a)$.  The relation $b\in \partial f_{\nu}(a)$ also implies that $c=ab-f_{\nu}(a)=f_{\nu}^*(b)$. We then see that 
    \[
    ac-h_{\nu}(a)=af_{\nu}^*(b)-h_{\nu}(a)=h^*_{\nu}(f_{\nu}^*(b)),
    \]
   hence, $h_{\nu}$ and $h_{\nu}^*\circ f_{\nu}^*$ satisfy the coupling relation \eqref{eq:abc_equation}. It also follows from the coupling relation that $\rhonu f^*_{\nu}(\pnu)=h_{\nu}(\rhonu)+ h^*_{\nu}(f^*_{\nu}(\pnu))$.  The result is then given by  \eqref{eq:edi_T} for the choices $h_{\nu}$ and $h_{\nu}^*\circ f_{\nu}^*$, where we have used the identity $\rhonu f^*_{\nu}(\pnu)=h_{\nu}(\rhonu)+ h^*_{\nu}(f^*_{\nu}(\pnu))$ to simplify the factor in front of $G(\pnu)$.
\end{proof}

\begin{remark}\label{rmk:DEI_h1}
    In the setting of \eqref{eq:limit},  the kinetic work in the $H^{-1}$ distance  is the space-time integral of $|\nabla f^*_0(p)|^2$. This is replaced by the integral of $ |\nabla f^*_{\nu}(\wnu)|^2$. In Section~\ref{sec: strong comp}, we will show that this quantity converges as $\nu\to 0$, which yields the strong convergence of $\nabla w_{\nu}$ to  $\nabla p$ in $L^2$.
\end{remark} 

\begin{cor}[Control on derivatives of $\wnu$]\label{cor:w_control}
Define $e_{\nu}:\dom(f_{\nu})\to\RR$ by taking
\[
e_{\nu}(a):=a\int_{a_0/2}^a \frac{f'_{\nu}(\alpha)}{\alpha^2}\, d\alpha,
\]
where $a_0>0$ is the value identified in Assumption \ref{as: energies} on the internal energies.

\medskip
\noindent
If $\liminf_{a\to 0} \frac{e_{\nu}(a)}{a}\neq -\infty$, then for any time $T\geq 0$, we have
\begin{equation}\label{e.EDI z''=1}
\begin{split}
\int_{\RR^d} e_\nu(\rho_{\nu}(T))+\int_0^T \!\!\!\int_{\RR^d} |\nabla \wnu|^2+\nu|\Delta \wnu|^2= \int_0^T \!\!\!\int_{\RR^d} \big(e_{\nu}(\rhonu)+\pnu\big) G(\pnu)+ \int_{\RR^d} e_\nu(\rho_{\nu}^{\mathrm{in}}).
\end{split}
\end{equation}

\smallskip
\noindent
Otherwise, given any $T\geq 0$ and any test function $\psi:[0,\infty)\times\RR^d\to [0,\infty)$ with compact support in time and
\begin{equation}\label{eq:n_psi}
N(\psi):=\norm{\psi}_{L^1(\{0\}\times \RR^d)}+\norm{\psi}_{L^1([0,\infty)\times \RR^d)}+\norm{\partial_t \psi}_{L^1([0,\infty)\times \RR^d)}+\left\|\frac{|\nabla \psi|^2}{\psi}\right\|_{L^1([0,\infty)\times\RR^d)}<\infty,
\end{equation}
there exists a constant $C>0$ such that 
\begin{equation}\label{eq:w_local_bound}
\int_0^{\infty}\!\!\! \int_{\RR^d} |\nabla \wnu|^2\psi+\nu|\Delta \wnu|^2\psi\leq C N(\psi)\big(\norm{\rhonu+\pnu+\rhonu^2\pnu^2}_{L^{\infty}([0,\infty)\times\RR^d)}+1\big).
\end{equation}
\end{cor}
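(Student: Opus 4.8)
The key observation is that the energy $e_{\nu}$ in the statement is exactly the one produced by Lemma~\ref{lem:e_from_z} from the choice $z_{\nu}(b)=\tfrac12 b^{2}$ (taking $a_{1}=a_{0}/2$, which lies in $\Int\dom(f_{\nu})$ since $[0,a_{0}]\subset\dom(f_{\nu})$ by Assumption~\ref{as: energies}). Thus $z_{\nu}'(b)=b$, $z_{\nu}''\equiv1$, $z_{\nu}(0)=z_{\nu}'(0)=0$, $z_{\nu}\in W^{2,1}_{\loc}([0,\infty))$, and $e_{\nu}$ is convex with $e_{\nu}(0)=0$ and coupled to $z_{\nu}$ through \eqref{eq:abc_equation}. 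For this pair the dissipation term of the energy evolution equation collapses, via the elliptic identity $\wnu-\pnu=\nu\Delta\wnu$, to
\[
\mathcal{L}_{z}=z_{\nu}''(\wnu)|\nabla\wnu|^{2}+\big(z_{\nu}'(\wnu)-z_{\nu}'(\pnu)\big)\tfrac{\wnu-\pnu}{\nu}=|\nabla\wnu|^{2}+\tfrac{|\wnu-\pnu|^{2}}{\nu}=|\nabla\wnu|^{2}+\nu|\Delta\wnu|^{2},
\]
and the growth factor becomes $e_{\nu}(\rhonu)+z_{\nu}'(\pnu)=e_{\nu}(\rhonu)+\pnu$. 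Hence, if $\liminf_{a\to0}e_{\nu}(a)/a\neq-\infty$, then $e_{\nu},z_{\nu}$ meet every hypothesis of Corollary~\ref{cor:dissipation}, and \eqref{e.EDI z''=1} is nothing but \eqref{eq:edi_T} for these choices with the constant weight $\eta\equiv1$.

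For the remaining case $\liminf_{a\to0}e_{\nu}(a)/a=-\infty$ the function $e_{\nu}(\rhonu)$ need not be globally integrable on $\RR^{d}$, so Corollary~\ref{cor:dissipation} no longer applies, and instead we keep a weight $\psi$ and use the weak evolution equation \eqref{eq:eee}. With the above choices, and moving all but the two nonnegative dissipation terms to the right, \eqref{eq:eee} becomes
\begin{align*}
\int_{0}^{\infty}\!\!\!\int_{\RR^{d}}\psi|\nabla\wnu|^{2}+\psi\,\nu|\Delta\wnu|^{2}
&=\int_{\RR^{d}}\psi\,e_{\nu}(\rhonu^{\mathrm{in}})+\int_{0}^{\infty}\!\!\!\int_{\RR^{d}}\psi\big(e_{\nu}(\rhonu)+\pnu\big)G(\pnu)\\
&\quad-\int_{0}^{\infty}\!\!\!\int_{\RR^{d}}e_{\nu}(\rhonu)\,\nabla\wnu\cdot\nabla\psi+\int_{0}^{\infty}\!\!\!\int_{\RR^{d}}e_{\nu}(\rhonu)\,\partial_{t}\psi+\int_{0}^{\infty}\!\!\!\int_{\RR^{d}}\tfrac12\wnu^{2}\,\Delta\psi.
\end{align*}
This identity holds for smooth compactly supported $\psi$ by Proposition~\ref{prop:eee}; once the right-hand side is estimated (below), the resulting inequality extends to any $\psi\ge0$ with $N(\psi)<\infty$ by approximation --- Fatou on the nonnegative left-hand side, together with the stability of $N(\cdot)$ under mollification (Jensen's inequality, since $(v,s)\mapsto|v|^{2}/s$ is convex, so mollification does not increase $\left\|\,|\nabla\psi|^{2}/\psi\,\right\|_{L^{1}}$) and under a suitable spatial cutoff (chosen with $|\nabla\chi|^{2}/\chi$ bounded).

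To estimate the four terms on the right, rewrite the last one by one integration by parts as $-\int\!\!\int\wnu\,\nabla\wnu\cdot\nabla\psi$; combining it with the third term, Young's inequality $|g\,\nabla\wnu\cdot\nabla\psi|\le\tfrac14|\nabla\wnu|^{2}\psi+g^{2}|\nabla\psi|^{2}/\psi$ (with $g=e_{\nu}(\rhonu)$, then $g=\wnu$) absorbs $\tfrac12\int\!\!\int|\nabla\wnu|^{2}\psi$ back into the left side. Everything else is handled by $L^{\infty}\times L^{1}$ pairings against the pieces of $N(\psi)$, using $|G(\pnu)|\le C$, the maximum principle $\|\wnu\|_{L^{\infty}}\le\|\pnu\|_{L^{\infty}}$ (from $\wnu-\nu\Delta\wnu=\pnu\ge0$), and the pointwise bound $|e_{\nu}(\rhonu)|\le C(1+\rhonu\pnu)$, which is where convexity of $f_{\nu}$ enters: since $f_{\nu}'$ is nondecreasing, for $\rhonu\ge a_{0}/2$ one has $0\le e_{\nu}(\rhonu)=\rhonu\int_{a_{0}/2}^{\rhonu}\tfrac{f_{\nu}'(\alpha)}{\alpha^{2}}\,d\alpha\le\tfrac{2}{a_{0}}\rhonu\pnu$, while for $0<\rhonu<a_{0}/2$ one has $0\le-e_{\nu}(\rhonu)=\rhonu\int_{\rhonu}^{a_{0}/2}\tfrac{f_{\nu}'(\alpha)}{\alpha^{2}}\,d\alpha\le f_{\nu}'(a_{0}/2)$, and $f_{\nu}'(a_{0}/2)$ is bounded uniformly in $\nu$ because the convex functions $f_{\nu}$ converge uniformly on compact subsets of $\dom(f_{0})$. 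Since then $e_{\nu}(\rhonu)^{2}\le C(1+\rhonu^{2}\pnu^{2})$, each term on the right is bounded by $CN(\psi)\big(\|\rhonu+\pnu+\rhonu^{2}\pnu^{2}\|_{L^{\infty}}+1\big)$, which yields \eqref{eq:w_local_bound}.

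The conceptual content --- that $\mathcal{L}_{z}$ reduces to $|\nabla\wnu|^{2}+\nu|\Delta\wnu|^{2}$ for this $e_{\nu}$, making the first case a direct consequence of Corollary~\ref{cor:dissipation} --- is immediate from Lemma~\ref{lem:e_from_z}. I expect the main obstacle to be the second case: securing the $\nu$-uniform pointwise bound $|e_{\nu}(\rhonu)|\lesssim1+\rhonu\pnu$ (so that the quadratic terms produced by Young's inequality are precisely those appearing on the right of \eqref{eq:w_local_bound}), and carrying out carefully the approximation that upgrades \eqref{eq:eee} from smooth test functions to weights satisfying only $N(\psi)<\infty$.
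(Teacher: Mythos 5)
Your proposal is correct and follows essentially the same route as the paper: the first identity is Corollary~\ref{cor:dissipation} applied to the pair $z_{\nu}'(b)=b$, $e_{\nu}$ from Lemma~\ref{lem:e_from_z}, and the weighted bound comes from testing \eqref{eq:eee} against $\psi$, integrating the $z_{\nu}(\wnu)\Delta\psi$ term by parts, absorbing the $\nabla\wnu\cdot\nabla\psi$ contributions via Young's inequality, and invoking the same $\nu$-uniform pointwise bound $|e_{\nu}(\rhonu)|\lesssim 1+\rhonu\pnu$ obtained from monotonicity of $f_{\nu}'$. Your additional care about extending to non-smooth $\psi$ with $N(\psi)<\infty$ and about the $\nu$-uniformity of $f_{\nu}'(a_0/2)$ are points the paper leaves implicit, but they do not change the argument.
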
 
\begin{proof}
   If we take $z_{\nu}'(b)=b$, then by Lemma \ref{lem:e_from_z}, it follows that $e_{\nu} $ and $z'_{\nu}$ satisfy the coupling relation.  The first result then follows from Corollary \ref{cor:dissipation}  with $\eta=1$.

Otherwise, using $\psi$, the energy evolution equation, Proposition \ref{prop:eee}, gives us
\begin{equation}
\begin{split} 
 &\int_0^{\infty}\!\!\! \int_{\RR^d}  \nu^{-1}\big(\pnu
-\wnu\big)^2\psi+ |\nabla \wnu|^2\psi+\int_0^{\infty}\!\!\! \int_{\RR^d} e_{\nu}(\rhonu)(\nabla \wnu\cdot \nabla \psi -\partial_t \psi)-\wnu\Delta \psi \\[0.5em]
&\qquad= \int_{\RR^d} \psi e_{\nu}(\rhonu^{\mathrm{in}})+\int_0^{\infty}\!\!\! \int_{\RR^d}  \psi \big(e_{\nu}(\rhonu)+\pnu\big)G(\pnu).
\end{split}
\end{equation}
Integrating by parts the term $-\wnu\Delta\psi$ and using Young's inequality on both terms involving $\nabla \wnu\cdot \nabla \psi$, it follows that
\begin{equation}
\begin{split} 
 &\int_0^{\infty}\!\!\! \int_{\RR^d}  \nu^{-1}\big(\pnu
-\wnu\big)^2\psi+ \frac{1}{2}|\nabla \wnu|^2\psi-e_{\nu}(\rhonu)\partial_t \psi \\[0.5em]
&\qquad\leq \int_{ \RR^d} \psi e_{\nu}(\rhonu^{\mathrm{in}})+\int_0^{\infty}\!\!\! \int_{\RR^d}  \psi \big(e_{\nu}(\rhonu)+\pnu\big)G(\pnu)+2(|e_{\nu}(\rhonu)|^2+1)\frac{| \nabla \psi|^2}{\psi}.
\end{split}
\end{equation}
Plugging in $\nu^{-1}\big(\pnu
-\wnu\big)^2=\nu|\Delta\wnu|^2$,
and applying Holder's inequality to various terms, there must exist a constant $C>0$ such that
\[
\int_0^{\infty}\!\!\! \int_{\RR^d} \psi|\nabla \wnu|^2+\nu\psi|\Delta \wnu|^2\leq C N(\psi)\big(\norm{\pnu}_{L^{\infty}([0,\infty)\times\RR^d)}+\norm{e_{\nu}(\rhonu)}_{L^{\infty}([0,\infty)\times\RR^d)}^2+1\big)
\]
We then note that 
\[
-\frac{b_0}{a_0}(a_0-2a)_+\leq e_{\nu}(a)\leq \frac{b}{a_0}(2a-a_0)_+
\]
where $b=\inf \partial f_{\nu}(a)$ and $b_0=\inf \partial f_{\nu}(a_0/2)$.
Therefore, 
\[
|e_{\nu}(\rhonu)|\leq b_0+\frac{2}{a_0}\rhonu \pnu
\]
and the result now follows.
\end{proof}

One final interesting application of EEE is related to what is known in the literature as a complementarity condition.
Typically, the complementarity condition is a specific relation that must hold for sufficiently regular solutions $(\rho, p)$ to incompressible versions of Darcy's law.
For the flow $\partial_t \rho-\nabla \cdot (\rho\nabla p)=\rho G(p)$ with the incompressible coupling condition $p(1-\rho)=0$, the   complementarity condition takes the form
\[
\Delta p+G(p)=0 \quad \textup{almost everywhere in} \, \{p>0\}. 
\]
In our final corollary, we will show that EEE implies an approximate analogue of the complementarity condition for Brinkman's law as well.  Let us note that this was proven for the incompressible Brinkman model in \cite{PV, DEBIEC2020}.  However,  we will see that this condition appears in more general contexts beyond incompressibility.

\begin{cor}[The Brinkman complementarity condition]\label{cor:complementarity}
If there exists some $T>0$ and some value $V_{p}\in \RR$ such that 
 $\norm{\rhonu}_{L^{\infty}([0,T]\times\RR^d)}\leq \sup \partial f_{\nu}^*(V_{p})$,
then
\[
\Delta \wnu+G(\pnu)=0 \quad \textup{almost everywhere on} \; \{(t,x)\in [0,T]\times\RR^d: \pnu(t,x)>V_p\}.
\]  
and 
\[
-\Delta \wnu\leq \sgn_+(\pnu-V_p)G(\pnu) \quad \textup{almost everywhere on} \; \{(t,x)\in [0,T]\times\RR^d: \wnu(t,x)>V_p\}.
\] 
\end{cor}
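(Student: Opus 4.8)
The plan is to apply the energy evolution equation of Proposition~\ref{prop:eee} with a convex energy $e_\nu$ that vanishes \emph{identically} on $[0,A]$, where $A:=\sup\partial f_\nu^*(V_p)$. By hypothesis $0\le\rhonu\le A$ a.e.\ on $[0,T]\times\RR^d$ (and likewise $\rhonu^{\mathrm{in}}\le A$, as the initial value attained by $\rhonu$ in the weak formulation), so every occurrence of $e_\nu(\rhonu)$ and $e_\nu(\rhonu^{\mathrm{in}})$ in \eqref{eq:eee} drops out, leaving a closed pointwise relation for $\Delta\wnu$. We may assume $V_p\ge0$: for $V_p<0$ the dual energy $f_\nu^*$ vanishes in a neighbourhood of $V_p$, so the hypothesis forces $\rhonu\equiv0$ and the statement is vacuous.

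To produce such an $e_\nu$, I would start from the pressure side and set $z_\nu'(b):=(b-V_p)_+$, so that $z_\nu(b)=\tfrac12(b-V_p)_+^2$, $z_\nu''=\mathbf 1_{\{\cdot>V_p\}}\in L^\infty_{\loc}([0,\infty))$, and (using $V_p\ge0$) $z_\nu(0)=z_\nu'(0)=0$; hence $z_\nu$ satisfies the hypotheses of Proposition~\ref{prop:eee}. Let $e_\nu$ be the convex energy attached to this $z_\nu$ by Lemma~\ref{lem:e_from_z}, with a base point $a_1\in(0,A]\cap\textup{int}(\dom(f_\nu))$ (nonempty since $a_0,A>0$). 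Because $A=\sup\partial f_\nu^*(V_p)$, monotonicity of the subdifferential graphs gives $f_\nu'(\alpha)\le V_p$ for a.e.\ $\alpha\le A$, so the integrand $z_\nu'(f_\nu'(\alpha))/\alpha^2=(f_\nu'(\alpha)-V_p)_+/\alpha^2$ in \eqref{eq:e_from_z} vanishes on $[0,A]$; therefore $e_\nu\equiv0$ on $[0,A]\cap\dom(f_\nu)$, and consequently $e_\nu(\rhonu)=0$ a.e.\ on $[0,T]\times\RR^d$ and $e_\nu(\rhonu^{\mathrm{in}})=0$. (One may note in passing that, by the coupling relation, $z_\nu'(\pnu)=(\pnu-V_p)_+$ is automatically supported on the set $\{\rhonu=A\}$, though this will not be used.)

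Next I would insert these choices into \eqref{eq:eee} using test functions $\psi\in C_c^\infty([0,T)\times\RR^d)$. Every term containing $e_\nu(\rhonu)$ or $e_\nu(\rhonu^{\mathrm{in}})$ drops, while $\rhonu e_\nu'(\rhonu)=e_\nu(\rhonu)+z_\nu'(\pnu)=z_\nu'(\pnu)$ by the coupling relation \eqref{eq:abc_equation}. Performing the same algebraic manipulation as in the derivation of \eqref{eq:formal_energy_equation} --- namely $\nu\Delta\wnu=\wnu-\pnu$ together with $\Delta z_\nu(\wnu)=z_\nu''(\wnu)|\nabla\wnu|^2+z_\nu'(\wnu)\Delta\wnu$ --- all the $|\nabla\wnu|^2$ and $\Delta z_\nu(\wnu)$ contributions cancel and one is left with
\[
z_\nu'(\pnu)\bigl(\Delta\wnu+G(\pnu)\bigr)=0\qquad\text{in }\mathcal{D}'\bigl([0,T)\times\RR^d\bigr).
\]
Since $z_\nu'(\pnu)=(\pnu-V_p)_+$, $\Delta\wnu$ and $G(\pnu)$ all belong to $L^\infty_{\loc}$, this is an equality of $L^1_{\loc}$ functions, hence holds pointwise a.e.; dividing by $(\pnu-V_p)_+>0$ on $\{\pnu>V_p\}$ gives the first assertion. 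For the second, I would argue on $\{\wnu>V_p\}$ by cases: where in addition $\pnu>V_p$, the first assertion gives $-\Delta\wnu=G(\pnu)=\sgn_+(\pnu-V_p)G(\pnu)$; where instead $\pnu\le V_p$, one has $\wnu>V_p\ge\pnu$, so $\Delta\wnu=\nu^{-1}(\wnu-\pnu)>0$ and hence $-\Delta\wnu<0=\sgn_+(\pnu-V_p)G(\pnu)$. These two cases exhaust $\{\wnu>V_p\}$, which yields the inequality there.

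The main obstacle --- and the only genuinely delicate point --- is obtaining the \emph{pointwise} identity displayed above rather than merely an integrated inequality. This is what dictates both design choices: one must take $z_\nu$ with $z_\nu''\in L^\infty_{\loc}$ (the naive Heaviside choice $z_\nu'=\sgn_+(\cdot-V_p)$ has $z_\nu''$ equal to a Dirac mass, falling outside the framework of Proposition~\ref{prop:eee}), and one must arrange that the localizing weight $e_\nu(\rhonu)$ vanishes identically, not merely that it be bounded, so that no residual distributional flux or time derivative survives when the $e_\nu$--terms are discarded.
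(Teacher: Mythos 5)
Your proposal is correct and follows essentially the same route as the paper: the same choice $z_\nu'(b)=(b-V_p)_+$ with the associated $e_\nu$ vanishing on $[0,\sup\partial f_\nu^*(V_p)]$, the same reduction of \eqref{eq:eee} to the pointwise identity $z_\nu'(\pnu)(\Delta\wnu+G(\pnu))=0$, and the same case split using $\wnu-\nu\Delta\wnu=\pnu$ for the second inequality. Your added observations (reducing to $V_p\ge0$ so that $z_\nu'(0)=0$, and the reason a Heaviside choice of $z_\nu'$ is inadmissible) are correct refinements of details the paper leaves implicit.
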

\begin{remark}
   For the incompressible version of Brinkman's law, we must have $\rhonu\leq 1$ everywhere.  Since the dual of the incompressible energy, $f_\nu\equiv f_{0}$ in \eqref{graph},  is $f_{\nu}^*(b)=b_+$ and $1\in \partial f_{\nu}^*(0)$, we see that we can choose $V_p=0$, which gives the usual complementarity condition except that $\Delta p$ is replaced by $\Delta \wnu$. 
\end{remark}
\begin{proof}
Define $z_{\nu}'(b):=(b-V_p)_+$ and $e_{\nu}(a)=a\int_0^a \frac{z'_{\nu}(f'_{\nu}(\alpha))}{\alpha^2}\, d\alpha. $  It then follows from our choice of $V_p$, that $e_{\nu}(\rhonu)=0$ on $[0,T]\times\RR^d$.  If we choose some test function $\psi\in C^{\infty}_c([0,\infty)\times\RR^d)$ such that $\psi$ is supported on $[0,T]\times\RR^d$, then  from \eqref{eq:eee} and the vanishing of $e_{\nu}(\rhonu)$, we have
    \begin{equation*}
0=\int_0^{\infty}\!\!\! \int_{\RR^d} \psi \big(z_{\nu}'(\pnu
)-z_{\nu}'(\wnu)\big)\frac{\pnu-\wnu}{\nu}+\psi z_{\nu}''(\wnu)|\nabla \wnu|^2  -z_{\nu}(\wnu)\Delta \psi -z'_{\nu}(\pnu)G(\pnu).
\end{equation*}
Recalling that ${\pnu-\wnu}=-\nu \Delta \wnu$ and moving the Laplacian off of $\psi$ and back on to $z_{\nu}(\wnu)$, we find that
\begin{equation*}
0=\int_0^{\infty}\!\!\! \int_{\RR^d} \psi \Big( -\big(z_{\nu}'(\pnu
)-z_{\nu}'(\wnu)\big)\Delta \wnu-z'_{\nu}(\wnu)\Delta \wnu-z'_{\nu}(\pnu)G(\pnu)\Big).
\end{equation*}
Thus, 
\begin{equation*}
0=\int_0^{\infty}\!\!\! \int_{\RR^d} \psi z'_{\nu}(\pnu)\big(\Delta \wnu+G(\pnu)\big).
\end{equation*}
for every $\psi\in C^{\infty}_c([0,\infty)\times\RR^d)$ such that $\psi$ is supported on $[0,T]\times\RR^d$.  Hence, we see that $z'_{\nu}(\pnu)\big(\Delta \wnu+G(\pnu)\big)=0$ almost everywhere on $[0,T]\times \RR^d$,  which implies the first  result.

\smallskip

For the second result, we just need to establish what happens when $\wnu>V_p$ but $\pnu\leq V_p$.  In this case, the equation $\wnu-\nu\Delta \wnu=\pnu$ implies that $-\nu\Delta \wnu\leq 0$.  Hence, when $\wnu>V_p$ we have $-\Delta \wnu\leq \sgn_+(\pnu-V_p)G(\pnu)$. 
\end{proof} 

\section{Uniform \textit{a priori} estimates and compactness}\label{sec: a priori} 
\begin{lemma}\label{lem:p_rho_bounds}
The following bounds hold uniformly for all $\nu> 0$ sufficiently small
$$\pnu, \rhonu\in L^{\infty}([0,\infty)\times \RR^d), \quad \rhonu\in L^{\infty}_{\loc}([0,\infty);L^1(\RR^d)).$$
In particular, if we set $B_p:=\max\big(p_H\, ,\,  B\big)$, where $B$ is the constant identified in Definition \ref{def in} and $M(0):=\sup_{\nu\geq 0} \norm{\rhonu^{\mathrm{in}}}_{L^1(\RR^d)}$, then \[
0\leq p_\nu\leq B_p, \quad 0\leq \rhonu\leq \sup\partial f^*_{\nu}(B_p),\quad  \norm{\rhonu}_{L^{\infty}([0,T];L^1(\RR^d))}\leq e^{G(0)T}M(0),
\]
where we note that  $\limsup_{\nu\to 0} \sup\partial f^*_{\nu}(B_p)$ is finite.
\end{lemma}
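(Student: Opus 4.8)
The plan is to handle the three assertions separately, noting first that nonnegativity of $\rhonu,\pnu,\wnu$ is already part of Definition~\ref{def:wk_sol_brinkman}, and that since $\wnu=(I-\nu\Delta)^{-1}\pnu$ with the Bessel kernel nonnegative and of unit mass, one has $0\le \wnu\le\|\pnu\|_{L^\infty}$, so $\Delta\wnu=\nu^{-1}(\wnu-\pnu)\in L^\infty$ for each fixed $\nu$. The $L^\infty_t L^1_x$ bound is the routine part: testing the weak form of \eqref{eq: rho} against cut-offs $\chi_R(x)\zeta(t)$ with $\chi_R\uparrow 1$ and using $\rhonu\nabla\wnu\in L^1$ (as $\rhonu\in L^1_x$, $\nabla\wnu\in L^\infty$) to discard the flux term as $R\to\infty$, one gets $\frac{d}{dt}\int_{\RR^d}\rhonu\le G(0)\int_{\RR^d}\rhonu$ from $G\le G(0)$ on $[0,\infty)$, whence $\|\rhonu(\cdot,t)\|_{L^1}\le e^{G(0)t}M(0)$ by Grönwall.

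For the $L^\infty$ bounds the mechanism is a maximum principle, which I would run on the regularized problems (with $f_\nu$ replaced by a smooth, strictly convex approximation, so that $\pnu=f_\nu'(\rhonu)$ is a strictly increasing smooth function of $\rhonu$) from which the weak solutions are obtained as limits. On such a regularized solution, at a spatial maximum $x_0$ of $\pnu(\cdot,t)$ --- equivalently of $\rhonu(\cdot,t)$ --- the elliptic comparison $\wnu\le\|\pnu\|_{L^\infty}$ gives $\Delta\wnu(x_0)=\nu^{-1}(\wnu(x_0)-\pnu(x_0))\le 0$, while the transport term in \eqref{eq: rho} vanishes since $\nabla\rhonu(x_0)=0$; hence $\partial_t\rhonu(x_0)\le\rhonu(x_0)G(\pnu(x_0))$, which is $\le 0$ as soon as $\pnu(x_0)\ge p_H$, i.e.\ as soon as $\rhonu(x_0)>\sup\partial f_\nu^*(p_H)$ (using that $\pnu(x_0)<p_H$ would force $\rhonu(x_0)\le\sup\partial f_\nu^*(p_H)$ by monotonicity). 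Thus $\max_x\rhonu(\cdot,t)$ cannot increase past $\sup\partial f_\nu^*(p_H)$, and combined with the initial bound $\rhonu^{\mathrm{in}}\le\sup\partial f_\nu^*(B)$ of Definition~\ref{def in} and the monotonicity of $\sup\partial f_\nu^*(\cdot)$ this yields $\rhonu\le\sup\partial f_\nu^*(\max(p_H,B))=\sup\partial f_\nu^*(B_p)$; the bound $\pnu\le B_p$ then follows from $\rhonu\le\sup\partial f_\nu^*(B_p)$ via $\rhonu\in\partial f_\nu^*(\pnu)$ and monotonicity. One then passes both bounds to the limit as the regularization is removed, the constant $\sup\partial f_\nu^*(B_p)$ being stable under this passage.

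For the uniform finiteness of $\sup\partial f_\nu^*(B_p)$ as $\nu\to 0$: the subgradient inequality for the convex function $f_\nu^*$ at $B_p$ gives, for every $R\in\partial f_\nu^*(B_p)$, that $f_\nu^*(B_p+1)\ge f_\nu^*(B_p)+R$, hence $\sup\partial f_\nu^*(B_p)\le f_\nu^*(B_p+1)-f_\nu^*(B_p)$. Since $f_0$ is superlinear by Assumption~\ref{as: energies}, $f_0^*$ is finite on all of $\RR$ and $f_\nu^*\to f_0^*$ locally uniformly; therefore the right-hand side converges to $f_0^*(B_p+1)-f_0^*(B_p)<\infty$, giving $\limsup_{\nu\to 0}\sup\partial f_\nu^*(B_p)<\infty$.

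The step I expect to be the main obstacle is the rigorous justification of the maximum principle. For genuinely weak solutions nothing can be evaluated pointwise, and a direct Stampacchia estimate (testing \eqref{eq: rho} against $\beta'(\rhonu)$ with $\beta$ convex, nondecreasing and vanishing on $(-\infty,\sup\partial f_\nu^*(B_p)]$, which would reduce matters to the single term $\int B(\rhonu)\Delta\wnu=\nu^{-1}\int B(\rhonu)(\wnu-\pnu)$ with $B(s)=s\beta'(s)-\beta(s)\ge 0$) runs into the difficulty that this diffusion-type term is not manifestly of good sign over the whole super-level set: the favorable sign is a global consequence of the elliptic comparison, visible only at the maximum of $\pnu$, and moreover the test function $\beta'(\rhonu)$ is not obviously admissible without extra regularity on $\rhonu$. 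This is why routing the bound through the approximation scheme used to construct the solutions is the robust approach; it also absorbs the minor subtlety, in the pressure bound, on the set where $\partial f_\nu$ may be multivalued.
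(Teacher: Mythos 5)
Your $L^1$ bound via Gr\"onwall and your argument that $\limsup_{\nu\to0}\sup\partial f_\nu^*(B_p)\le f_0^*(B_p+1)-f_0^*(B_p)<\infty$ are both correct and coincide with the paper's. The gap is in the $L^\infty$ bound, which is the heart of the lemma. Your argument there is only formal: you run a pointwise maximum principle on an unspecified regularization, and you yourself flag that its rigorous justification is ``the main obstacle.'' Two concrete problems remain open in your write-up. First, the lemma must hold for \emph{arbitrary} weak solutions in the sense of Definition~\ref{def:wk_sol_brinkman} --- this is exactly what Theorem~\ref{theorem} assumes --- so a bound established only for limits of a particular construction scheme does not prove the statement; you would at minimum need to exhibit the scheme and prove the weak solutions of the theorem arise from it, which is not part of the paper's framework. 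Second, even on the regularized problem the supremum over $\RR^d$ need not be attained, and the stability of the constant $\sup\partial f_\nu^*(B_p)$ when a singular $f_\nu$ (e.g.\ one that is $+\infty$ beyond a threshold) is replaced by a smooth strictly convex approximation is asserted rather than proved.

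What is striking is that you correctly diagnose the obstruction to the direct Stampacchia route --- the term $\nu^{-1}\int B(\rhonu)(\wnu-\pnu)$ has no manifest sign on the super-level set --- but this is precisely the difficulty that the paper's energy evolution machinery, already available to you at this point, is designed to remove. The paper's proof takes $z_\nu'(b)=(b-B_p)_+$ with the coupled $e_\nu$ from Lemma~\ref{lem:e_from_z} and applies Corollary~\ref{cor:dissipation}: well-preparedness gives $e_\nu(\rhonu^{\mathrm{in}})=0$, and $e_\nu(\rhonu)>0$ forces $\pnu>B_p\ge p_H$ so the growth term $(e_\nu(\rhonu)+z_\nu'(\pnu))G(\pnu)$ is nonpositive; hence the sum of the remaining nonnegative terms is $\le 0$, so each vanishes. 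This yields $z_\nu'(\wnu)=0$, i.e.\ $\wnu\le B_p$, and then $0=\int\!\!\int(z_\nu'(\pnu)-z_\nu'(\wnu))\frac{\pnu-\wnu}{\nu}\ge\int\!\!\int\frac{(\pnu-B_p)_+^2}{\nu}$ gives $\pnu\le B_p$; the density bound follows from $\rhonu\in\partial f_\nu^*(\pnu)$. The reorganization through $z_\nu$ is exactly what makes the ``diffusion-type term'' you worried about manifestly nonnegative, and it operates directly at the level of weak solutions, with no regularization or pointwise evaluation. As written, your proposal does not close this step.
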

\begin{proof}
    Define $z_{\nu}'(b):=(b-B_p)_+$ and $e_{\nu}(a)=a\int_{0}^a \frac{z'_{\nu}(f'_{\nu}(\alpha))}{\alpha^2}d\alpha$, where we note that the integral for $e_{\nu}$ is well defined since $z'_{\nu}(f'_{\nu}(\alpha))=0$ for all $\alpha\leq \sup\partial f^*_{\nu}(B_p)$.  Now we can apply Lemma \ref{lem:e_from_z},  to see that $e_{\nu}$ and $z_{\nu}'$ satisfy the coupling relation.  In turn,  Corollary \ref{cor:ede} yields that  $e_{\nu}(\rhonu)$ satisfies the total energy dissipation identity \eqref{eq:edi_T}. 
    Since Definition \ref{def in} guarantees that  $\rhonu^{\textup{in}}\leq \sup \partial f_{\nu}^*(B)$, it follows that $e_{\nu}(\rhonu^{\mathrm{in}})=0$.  Similarly, if $e_{\nu}(\rhonu)>0$, then $\pnu>B_p\geq p_H$.  Therefore, by Assumption~\ref{as: reaction}
    \[
    (e_{\nu}(\rhonu)+z'_{\nu}(\pnu))G(\pnu)\leq 0
    \]
     almost everywhere.  As a result, we can use the above considerations and Corollary~\ref{cor:dissipation} to obtain the inequality 
    \[
\begin{split}
 &\int_{\RR^d}  e_{\nu}(\rhonu(T))+\int_0^T\!\!\!\int_{\RR^d} { \big(z_{\nu}'(\pnu
)-z_{\nu}'(\wnu)\big)\frac{\pnu-\wnu}{\nu}}+ z_{\nu}''(\wnu)|\nabla \wnu|^2  \leq 0.
\end{split}
    \]
All of the terms are nonnegative, so each of the individual quantities on the left-hand side is zero. Automatically, this gives us $z'(\wnu)=0$, i.e. $\wnu\leq B_p$ almost everywhere.  We then see that   
   \[
   0=\int_0^T\!\!\!\int_{\RR^d} { \big(z_{\nu}'(\pnu
)-z_{\nu}'(\wnu)\big)\frac{\pnu-\wnu}{\nu}}\geq \int_0^T\!\!\!\int_{\RR^d} {z_{\nu}'(\pnu
)\frac{\pnu-B_p}{\nu}}=\int_0^T\!\!\!\int_{\RR^d} {\frac{(\pnu-B_p)_+^2}{\nu}},
\]
so $\pnu\leq B_p$ almost everywhere.  The bound $0\leq \rhonu\leq \sup\partial f^*_{\nu}(B_p)$ then follows from the coupling law $\rhonu\in \partial f_{\nu}^*(\pnu)$.  We then  note that 
\[
\sup\partial f^*_{\nu}(B_p)\leq f^*_{\nu}(B_p+1)-f^*_{\nu}(B_p),
\]
Thus, \[
\limsup_{\nu\to 0} \sup\partial f^*_{\nu}(B_p)\leq f^*_{0}(B_p+1)-f^*_{0}(B_p)<\infty.
\]
Note that here we are strongly using the fact that $f^*_{0}(b)\in \RR$ for all $b\in \RR$, which follows from Assumption \ref{as: energies} Part 2.

Finally, the $L^1$ bound $\norm{\rhonu}_{L^{\infty}([0,T];L^1(\RR^d))}\leq e^{G(0)T}M(0)$ follows readily from \eqref{eq: rho} and the fact that $G$ is decreasing in the pressure.
\end{proof}

\begin{lemma}\label{lemma:w_bounds}
The following holds uniformly in $\nu> 0$
\begin{itemize}
     \item[(i)] $\nabla \wnu \in L^2_{\loc}([0,\infty)\times \RR^d),$ 
 \item[(ii)] $\sqrt \nu \Delta  \wnu \in L^2_{\loc}([0,\infty)\times \RR^d),$ 
 \item[(iii)] $\partial_t \rhonu \in L^2_{\loc}([0,\infty); H^{-1}_{\loc}(\RR^d))$.
\end{itemize}
\end{lemma}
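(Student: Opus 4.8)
The plan is to deduce all three bounds from the energy evolution machinery already in place, so that the present lemma is essentially a bookkeeping corollary of Corollary~\ref{cor:w_control} and Lemma~\ref{lem:p_rho_bounds}. For (i) and (ii), fix $T>0$ and a ball $B_R\subset\RR^d$ and take the test function $\psi=\chi^2$ with $\chi\in C_c^\infty([0,\infty)\times\RR^d)$, $0\le\chi\le1$, and $\chi\equiv1$ on $[0,T]\times B_R$; then $|\nabla\psi|^2/\psi=4|\nabla\chi|^2$ is bounded and compactly supported, so $N(\psi)<\infty$ in the sense of \eqref{eq:n_psi}. If the energy $e_\nu$ appearing in Corollary~\ref{cor:w_control} satisfies $\liminf_{a\to0}e_\nu(a)/a=-\infty$, then \eqref{eq:w_local_bound} directly gives
\[
\int_0^T\!\!\int_{B_R}\big(|\nabla\wnu|^2+\nu|\Delta\wnu|^2\big)\le\int_0^\infty\!\!\int_{\RR^d}\big(|\nabla\wnu|^2+\nu|\Delta\wnu|^2\big)\psi\le CN(\psi)\big(\norm{\rhonu+\pnu+\rhonu^2\pnu^2}_{L^\infty}+1\big),
\]
whose right-hand side is bounded uniformly in $\nu$ by the $L^\infty$ bounds of Lemma~\ref{lem:p_rho_bounds}. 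In the complementary case the global identity \eqref{e.EDI z''=1} applies; its right-hand side $\int_0^T\!\!\int(e_\nu(\rhonu)+\pnu)G(\pnu)+\int e_\nu(\rhonu^{\mathrm{in}})$ is again bounded uniformly in $\nu$, using the $L^\infty$ bounds on $\rhonu,\pnu$, the uniform $L^1$ bounds on $\rhonu$ and $\rhonu^{\mathrm{in}}$, the estimate $|e_\nu(a)|\le b_0+\tfrac{2}{a_0}ap$ (valid for $a\in\dom(f_\nu)$, $p\in\partial f_\nu(a)$, as in the proof of Corollary~\ref{cor:w_control}), and the fact that the positive part of $G$ is bounded by $G(0)$; moreover $\int_{\RR^d}e_\nu(\rhonu(T))\ge-C\norm{\rhonu(T)}_{L^1}$ since $e_\nu\ge0$ on $\{a>a_0/2\}$ and $e_\nu(a)\ge-Ca$ near $0$ under the liminf hypothesis. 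Either way, (i) and (ii) hold uniformly in $\nu$.

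For (iii), rewrite \eqref{eq: rho} as $\partial_t\rhonu=\nabla\cdot(\rhonu\nabla\wnu)+\rhonu G(\pnu)$ in the distributional sense, and fix $T>0$ together with a bounded open set $\Omega\subset\RR^d$. Testing the weak formulation of Definition~\ref{def:wk_sol_brinkman} against $\varphi\in H_0^1(\Omega)$ and applying Cauchy–Schwarz and the Poincaré inequality on $\Omega$ gives, for a.e.\ $t$,
\[
\abs{\langle\partial_t\rhonu(t),\varphi\rangle}\le\norm{\rhonu(t)}_{L^\infty}\norm{\nabla\wnu(t)}_{L^2(\Omega)}\norm{\nabla\varphi}_{L^2(\Omega)}+G(0)\,\norm{\rhonu(t)}_{L^\infty}\,C_\Omega\norm{\nabla\varphi}_{L^2(\Omega)},
\]
hence $\norm{\partial_t\rhonu(t)}_{H^{-1}(\Omega)}\le C\big(\norm{\nabla\wnu(t)}_{L^2(\Omega)}+1\big)$ with $C=C(\norm{\rhonu}_{L^\infty},G(0),\Omega)$ independent of $\nu$. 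Squaring, integrating over $[0,T]$, and invoking part (i) yields $\int_0^T\norm{\partial_t\rhonu(t)}_{H^{-1}(\Omega)}^2\,dt\le C\big(\norm{\nabla\wnu}_{L^2([0,T]\times\Omega)}^2+T\big)$, which is bounded uniformly in $\nu$; as $T$ and $\Omega$ were arbitrary this is exactly (iii).

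I do not expect a genuinely hard step here — the analytic substance lives in the energy evolution equation and in Corollary~\ref{cor:w_control}. The one point that really must be checked is the \emph{uniformity in $\nu$}: that the constant $C$ in \eqref{eq:w_local_bound} is absolute, and that the $L^\infty$ bounds of Lemma~\ref{lem:p_rho_bounds} are $\nu$-independent — in particular that $\limsup_{\nu\to0}\sup\partial f_\nu^*(B_p)<\infty$ — so that every right-hand side above stays bounded as $\nu\to0$. The only other bit of care is the uniform integrability of the right-hand side of \eqref{e.EDI z''=1} in the first alternative of Corollary~\ref{cor:w_control}, which is precisely where Assumption~\ref{as: energies}\eqref{as: limit/a} on the behavior of $f_\nu$ near $0$ and $\infty$ and the boundedness of the positive set of $G$ (Assumption~\ref{as: reaction}) enter.
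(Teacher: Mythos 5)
Your proof is correct and follows essentially the same route as the paper, which deduces (i) and (ii) from Corollary~\ref{cor:w_control} combined with the uniform $L^\infty$ bounds of Lemma~\ref{lem:p_rho_bounds}, and (iii) from the resulting uniform bound on $\nabla w_\nu$ by testing the weak formulation against $H^1_0$ functions. The only point worth tightening is that in your first alternative the constant in the lower bound $e_\nu(a)\ge -Ca$, namely $\int_0^{a_0/2} f'_\nu(\alpha)\alpha^{-2}\,d\alpha$, is not obviously uniform in $\nu$; this is harmless, since the localized estimate \eqref{eq:w_local_bound}, whose derivation does not actually use the liminf hypothesis, already yields (i) and (ii) with $\nu$-independent constants in every case.
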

\begin{proof}
    From Corollary \ref{cor:w_control} and the uniform bounds on $\varrho_\nu$ and $\pnu$ we find \textit{(i)} and \textit{(ii)}. The uniform boundedness of $\nabla \wnu$ implies \textit{(iii)}. 
\end{proof}

The previous two Lemmas give us the following compactness properties.

\begin{lemma}\label{lem:compactness}
There exists $p, \rho, R\in L^{\infty}([0,\infty)\times \RR^d)$
and  $m\in L^2_{\loc}([0,\infty)\times\RR^d)$
such that 
\[
\nabla p\in L^2_{\loc}([0,\infty)\times\RR^d),\quad  \rho, R\in L^{\infty}_{\loc}([0,\infty);L^1( \RR^d)), \quad \partial_t \rho\in L^2_{\loc}([0,\infty);H^{-1}_{\loc}(\RR^d))
\]
and up to a subsequence we have for any $q\in [1,\infty)$ the following convergence properties as $\nu\to 0$
 
\begin{equation}
    \begin{split}
    \vspace{.1in} 
 |\pnu-\wnu| \rightarrow 0  \quad &\textup{strongly in } L^q_{\loc}([0,\infty)\times\RR^d),\\
    \vspace{.1in}
  \rhonu \rightharpoonup \rho, \quad \rhonu G(\pnu) \rightharpoonup R, \quad     &\textup{weakly in } L^q_{\loc}([0,\infty);L^q(\RR^d)),  \\
\vspace{.1in}
  \pnu \rightharpoonup p, \quad  \wnu \rightharpoonup p \quad     &\textup{weakly in } L^q_{\loc}([0,\infty)\times\RR^d),  \\
\vspace{.1in}
\nabla \wnu \rightharpoonup \nabla p, \quad  \rhonu\nabla \wnu \rightharpoonup m  \quad &\textup{weakly in } L^2_{\loc}([0,\infty)\times\RR^d), 
\end{split}
\end{equation}
In particular, this implies that along the selected subsequence, for each time $t\geq0$, the $\rhonu(t,\cdot)$ form a tight family of measures.
\end{lemma}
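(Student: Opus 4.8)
The plan is to read off nearly all of the stated convergences directly from the uniform a priori bounds of Lemmas~\ref{lem:p_rho_bounds} and \ref{lemma:w_bounds} by weak compactness, to identify the various limits using the relation $\pnu-\wnu=\nu\Delta\wnu$, and then to prove the tightness claim separately via a cut-off/Gr\"onwall argument on the continuity equation. I expect this last point to be the only genuinely non-routine step.

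\emph{Step 1 (weak compactness).} By Lemma~\ref{lem:p_rho_bounds}, $\{\rhonu\}$, $\{\pnu\}$ and $\{\rhonu G(\pnu)\}$ are bounded in $L^\infty_{\loc}([0,\infty)\times\RR^d)$ (for the last one since $0\le G(\pnu)\le G(0)$ by Assumption~\ref{as: reaction}), and $\{\rhonu\}$ is bounded in $L^\infty_{\loc}([0,\infty);L^1(\RR^d))$; the elliptic comparison principle applied to $\wnu-\nu\Delta\wnu=\pnu$ gives $0\le\wnu\le B_p$ as well. By Lemma~\ref{lemma:w_bounds}, $\{\nabla\wnu\}$ and $\{\sqrt\nu\,\Delta\wnu\}$ are bounded in $L^2_{\loc}([0,\infty)\times\RR^d)$, hence so is $\{\rhonu\nabla\wnu\}$, and $\{\partial_t\rhonu\}$ is bounded in $L^2_{\loc}([0,\infty);H^{-1}_{\loc}(\RR^d))$. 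Exhausting $[0,\infty)\times\RR^d$ by a countable family of compact sets and diagonalizing over that family and over $q\in\N$, Banach--Alaoglu (together with Dunford--Pettis for the $q=1$ endpoint) produces a single subsequence along which $\rhonu\rightharpoonup\rho$, $\rhonu G(\pnu)\rightharpoonup R$ weakly in every $L^q_{\loc}$, $\pnu\rightharpoonup p$, $\wnu\rightharpoonup\tilde p$ weakly in every $L^q_{\loc}$, and $\nabla\wnu\rightharpoonup v$, $\rhonu\nabla\wnu\rightharpoonup m$ weakly in $L^2_{\loc}$. Weak-$*$ lower semicontinuity and Fatou propagate the $L^\infty$ and $L^\infty_{\loc}(L^1)$ bounds to $\rho$ and $R$, and the $L^2_{\loc}(H^{-1}_{\loc})$ bound to $\partial_t\rho$.

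\emph{Step 2 (identification of the limits).} Since $|\pnu-\wnu|=\nu|\Delta\wnu|=\sqrt\nu\,|\sqrt\nu\,\Delta\wnu|$, for any compact $K$ we get $\|\pnu-\wnu\|_{L^2(K)}\le\sqrt\nu\,\|\sqrt\nu\,\Delta\wnu\|_{L^2(K)}\to0$; interpolating with the uniform $L^\infty$ bound upgrades this to strong convergence in $L^q_{\loc}([0,\infty)\times\RR^d)$ for every finite $q$, and in particular forces $\tilde p=p$, i.e. $\wnu\rightharpoonup p$. Testing the weak convergences $\nabla\wnu\rightharpoonup v$ and $\wnu\rightharpoonup p$ against $C^\infty_c$ functions identifies $v=\nabla p$ in $\mathcal D'$, so $\nabla p\in L^2_{\loc}$ and $\nabla\wnu\rightharpoonup\nabla p$; likewise $\partial_t\rhonu\rightharpoonup\partial_t\rho$ in $\mathcal D'$, whence $\partial_t\rho\in L^2_{\loc}([0,\infty);H^{-1}_{\loc})$. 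This also pins down all the convergences in the statement except the tightness.

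\emph{Step 3 (tightness).} Fix $t\ge0$ and, for $R>0$, let $\phi_R(x)=\phi(x/R)$ with $\phi\in C^\infty_c(\RR^d)$, $0\le\phi\le1$, $\phi=1$ on $B_1$, $\phi=0$ outside $B_2$, $|\nabla\phi|\le2$. Using $(1-\phi_R)$ as a (time-independent) test function in Definition~\ref{def:wk_sol_brinkman} over $[0,t]$ gives
\[
\int_{\RR^d}\rhonu(t)(1-\phi_R)=\int_{\RR^d}\rhonu^{\mathrm{in}}(1-\phi_R)+\int_0^t\!\!\int_{\RR^d}\rhonu\nabla\wnu\cdot\nabla\phi_R+\int_0^t\!\!\int_{\RR^d}\rhonu G(\pnu)(1-\phi_R).
\]
The last term is at most $G(0)\int_0^t\big(\int_{\RR^d}\rhonu(s)(1-\phi_R)\big)\,ds$. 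Since $\nabla\phi_R$ is supported in the annulus $A_R=B_{2R}\setminus B_R$ with $|\nabla\phi_R|\le2/R$, Cauchy--Schwarz and the uniform mass bound $\|\rhonu(s)\|_{L^1}\le e^{G(0)s}M(0)$ give
\[
\Big|\int_0^t\!\!\int_{\RR^d}\rhonu\nabla\wnu\cdot\nabla\phi_R\Big|\le\frac2R\Big(\int_0^t\!\!\int_{A_R}\rhonu\Big)^{1/2}\Big(\int_0^t\!\!\int_{\RR^d}\rhonu|\nabla\wnu|^2\Big)^{1/2}\le\frac{C_t}{R},
\]
where the uniform-in-$\nu$ bound on the space-time integral of $\rhonu|\nabla\wnu|^2$ is drawn from the energy-dissipation identities \eqref{eq:edi f}/\eqref{eq:edi h} (and, in the cases where it applies, from the global identity \eqref{e.EDI z''=1}): their right-hand sides are controlled uniformly because $p\mapsto pG(p)$ is bounded with support in $[0,p_H]$ (so that contribution is absorbed by $\|\rhonu\|_{L^\infty_{\loc}(L^1)}$) and because the subgradient inequality together with well-preparedness yields $0\le f_\nu(\rhonu^{\mathrm{in}})\le B_p\rhonu^{\mathrm{in}}\in L^1$. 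Finally $\{\rhonu^{\mathrm{in}}\}$ is tight, being weakly convergent in $L^1(\RR^d)$. Plugging these estimates in and applying Gr\"onwall,
\[
\int_{\RR^d}\rhonu(t)(1-\phi_R)\le e^{G(0)t}\Big(\int_{\RR^d}\rhonu^{\mathrm{in}}(1-\phi_R)+\frac{C_t}{R}\Big)\xrightarrow[R\to\infty]{}0
\]
uniformly in $\nu$; since $\int_{|x|>2R}\rhonu(t)\le\int_{\RR^d}\rhonu(t)(1-\phi_R)$, this is exactly the claimed tightness.

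\emph{Main obstacle.} Steps 1--2 are routine functional analysis. The delicate point is the mass-flux estimate in Step 3: the \emph{local} bounds of Lemma~\ref{lemma:w_bounds} do not control $\int\!\int_{A_R}\rhonu|\nabla\wnu|^2$ on the growing annuli, so one must instead extract this as a \emph{global-in-space}, finite-time quantity from the energy-dissipation corollaries, using the uniform $L^1$ mass bound to tame the reaction term. Some care is needed in comparing the $f^{*\prime}_\nu(\wnu)$-weighted dissipation appearing in \eqref{eq:edi f} with the $\rhonu$-weighted flux, since $\wnu$ and $\pnu$ need not be pointwise ordered. An alternative is to first obtain weak-$*$ convergence of $\rhonu(t,\cdot)$ as measures from the $\partial_t\rhonu$ bound of Lemma~\ref{lemma:w_bounds}(iii) via an Aubin--Lions argument in $C_{\loc}([0,\infty);H^{-1}_{\loc})$ and then upgrade to tightness by establishing convergence of the total masses $\|\rhonu(t)\|_{L^1}$ — but this route ultimately reduces to the very same control of the mass escaping to spatial infinity.
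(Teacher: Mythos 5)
Your Steps 1 and 2 coincide with the paper's argument: weak compactness from the uniform bounds of Lemmas~\ref{lem:p_rho_bounds} and \ref{lemma:w_bounds}, and identification of the limits via $\pnu-\wnu=\nu\Delta\wnu$ interpolated with the $L^\infty$ bound. The problem is Step 3. Your annulus estimate hinges on a \emph{global-in-space}, uniform-in-$\nu$ bound for $\int_0^t\int_{\RR^d}\rhonu|\nabla\wnu|^2$, and none of the results you invoke supplies it. Corollary~\ref{cor:ede} controls $\int\!\!\int f^{*\,\prime}_{\nu}(\wnu)|\nabla\wnu|^2$ and Corollary~\ref{cor:h-1} controls $\int\!\!\int|\nabla f^*_{\nu}(\wnu)|^2$, and neither weight dominates $\rhonu$ pointwise: $\rhonu\in\partial f^*_{\nu}(\pnu)$ can exceed $f^{*\,\prime}_{\nu}(\wnu)$ both where $\pnu>\wnu$ and, more seriously, wherever $f^{*\,\prime}_{\nu}$ degenerates near $0^+$ (already for $f_{\nu}(a)=\tfrac12 a^2$ one has $f^{*\,\prime}_{\nu}(\wnu)=\wnu$, which vanishes where $\rhonu=\pnu$ need not). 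You flag this mismatch yourself but do not resolve it. The two fallbacks also fail in general: the unweighted global identity \eqref{e.EDI z''=1} is only available under $\liminf_{a\to 0}e_{\nu}(a)/a\neq-\infty$, i.e.\ $\int_0 f'_{\nu}(\alpha)\alpha^{-2}\,d\alpha<\infty$, which fails for $f_{\nu}(a)=\tfrac12a^2$; and the unconditional bound \eqref{eq:w_local_bound} is weighted by a spatially integrable $\psi$, so applying it on $[0,t]\times A_R$ costs a factor $\norm{\psi}_{L^1}\gtrsim R^d$ and the resulting estimate $R^{-1}(R^d)^{1/2}$ does not tend to zero for $d\geq 2$. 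As written, Step 3 therefore does not close under Assumption~\ref{as: energies}.

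The paper sidesteps the flux estimate entirely. Since Definition~\ref{def:wk_sol_brinkman} admits test functions that are constant in space, one integrates \eqref{eq: rho} against $e^{-G(0)t}$ to obtain the exact mass balance $\int e^{-G(0)T}\rhonu(T)=\int\rhonu^{\mathrm{in}}+\int_0^T\!\!\int e^{-G(0)t}\rhonu(G(\pnu)-G(0))$, in which the transport term has vanished identically; the analogous balance holds for the local weak limit $(\rho,m,R)$ because $\rho,R\in L^{\infty}_{\loc}([0,\infty);L^1(\RR^d))$. Subtracting, choosing a compact $K$ outside of which $\rho$ and $R$ carry mass at most $\epsilon$, using the weak convergences on $[0,T]\times K$ and the sign $\rhonu(G(\pnu)-G(0))\leq 0$ (monotonicity of $G$) on the complement, one obtains $\limsup_{\nu\to0}\int\rhonu(T)\leq\int\rho(T)$, which is exactly the tightness. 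In other words, tightness is derived a posteriori by comparing total masses with the already-identified limit, not by a uniform control of the mass flux through large annuli; if you wish to retain your cut-off structure you must first renormalize by $e^{-G(0)t}$ and still replace the flux estimate by this comparison argument.
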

\begin{proof}
 To establish  the strong convergence of $|\pnu-\wnu|$, we recall that $\pnu-\wnu=\nu\Delta \wnu$ and from Lemma \ref{lemma:w_bounds} we know that $\nu^{1/2}\Delta\wnu$ is uniformly bounded in $L^2_{\loc}([0,\infty)\times\RR^d)$.  Hence, interpolating between the $L^2_{\loc}([0,\infty)\times\RR^d)$ bound on $\nu^{1/2}\Delta\wnu$ and the uniform $L^{\infty}$ bounds on $\pnu, \wnu$, the strong convergence follows.   Note that this automatically implies that $\wnu, \pnu$ must have the same weak limit.

All of the remaining weak convergence properties follow from the uniform bounds in Lemmas \ref{lem:p_rho_bounds} and \ref{lemma:w_bounds}, except for the convergence of $\rhonu$ and $\rhonu G(\pnu)$ in $ L^q_{\loc}([0,\infty);L^q(\RR^d))$ instead of  $L^q_{\loc}([0,\infty)\times\RR^d)$. To eliminate the need to localize the weak convergence in space, it suffices to show that for any time $T>0$ we have (along a subsequence)
\[
\limsup_{\nu \to 0} \int_{\RR^d} \rhonu(T)\leq \int_{\RR^d} \rho(T),
\]
which will imply that  $\rhonu(T,\cdot)$ form a tight family of measures for each time $T> 0$.

Integrating the evolution equation \eqref{eq: rho}, it follows that for any time $T>0$, 
\[
\int_{\RR^d} e^{-G(0)T}\rhonu(T)=\int_{\RR^d} \rhonu^{\textup{in}}+\int_0^T\!\!\!\int_{\RR^d} e^{-G(0)t}\rhonu (G(\pnu)-G(0)).
\]
We can derive a similar equation for $\rho$, as our convergence properties imply that for any $\vp\in C^{\infty}_c([0,\infty)\times\RR^d)$, the limit variables $(\rho, m, R)$ satisfy
\[
	\int_0^{\infty}\!\!\! \int_{\RR^d} m \cdot \nabla \varphi - \rho \partial_t \varphi  
          = \int_0^{\infty}\!\!\! \int_{\RR^d} \varphi R  +\int_{\RR^d} \varphi(0) \rho^{\mathrm{in}}.
\]
Since $\rho, R\in L^{\infty}_{\loc}([0,\infty);L^1(\RR^d))$ and $\rho^{\textup{in}}\in L^1(\RR^d)$, it then follows that the above equation holds for all $\vp\in C^{\infty}_c([0,\infty);C^{\infty}(\RR^d))$.
Therefore, we can deduce that
\[
\int_{\RR^d} e^{-G(0)T}(\rhonu-\rho)(T)=\int_{\RR^d} \rhonu^{\textup{in}}-\rho^{\textup{in}}+\int_0^T\!\!\!\int_{\RR^d} e^{-G(0)t}\big(\rhonu (G(\pnu)-G(0))-R+G(0)\rho\big).
\]
Given $\epsilon>0$, choose some compact set $K\subset\RR^d$ such that 
\[
\int_0^T\!\!\!\int_{\RR^d\setminus K} G(0)\rho +|R|<\epsilon.
\]
  Since we know that $\rhonu, \rhonu G(\pnu)$ converge weakly to $\rho, R$ in $L^1([0,T]\times K)$ and $\rhonu^{\textup{in}}$ converges weakly to $\rho^{\textup{in}}$ in $L^1(\RR^d)$, it follows that
  \begin{align*}
\limsup_{\nu\to 0}\int_{\RR^d} e^{-G(0)T}(\rhonu-\rho)(T)&= \limsup_{\nu\to 0} \int_0^T\!\!\!\int_{\RR^d\setminus K} e^{-G(0)t}\big(\rhonu (G(\pnu)-G(0))-R+G(0)\rho\big)\\
&\leq \epsilon+\limsup_{\nu\to 0} \int_0^T\!\!\!\int_{\RR^d\setminus K} e^{-G(0)t}\rhonu (G(\pnu)-G(0))\\
&\leq \epsilon,
\end{align*}
where the final inequality uses the fact that $G$ is decreasing.
\end{proof} 

Though nearly all of our convergence properties are weak, the next lemma shows that various nonlinear properties persist through the weak limits.  

\begin{lemma}\label{lem:nonlinear_weak}
As $\nu\to 0$, up to a subsequence 
\begin{equation}\label{eq:nonlinear_limits}
    \begin{array}{cc}
    \vspace{.1in}
\rhonu\pnu \rightharpoonup \rho p &\textup{weakly in } L^q_{\loc}([0,\infty);L^q(\RR^d)),\\
\vspace{.1in}
f_{\nu}(\rhonu) \rightharpoonup f_0(\rho) &\textup{weakly in } L^q_{\loc}([0,\infty);L^q(\RR^d)),\\
 f_{\nu}^*(\pnu),\; f_{\nu}^*(\wnu) \rightharpoonup f_0^*(p)  &\textup{weakly in } L^q_{\loc}([0,\infty);L^q(\RR^d)),
\end{array}
\end{equation}
where $q$ is any element of $[1,\infty)$.
In particular, $\rho p=f_0(\rho)+f_0^*(p)$.
\end{lemma}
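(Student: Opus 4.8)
The plan is to build everything on the pointwise Young equality $\rhonu\pnu = f_{\nu}(\rhonu)+f_{\nu}^{*}(\pnu)$, which holds a.e.\ since $\pnu\in\partial f_{\nu}(\rhonu)$, and to combine it with two inputs: one-sided semicontinuity bounds coming from convexity, and a compensated-compactness identification of the weak limit of $\rhonu\pnu$. First I would record the uniform bounds. Using the $L^{\infty}$ bounds of Lemma~\ref{lem:p_rho_bounds}, the bound $0\le\wnu\le B_{p}$ (immediate from $\wnu=(I-\nu\Delta)^{-1}\pnu$ and $0\le\pnu\le B_{p}$), and the locally uniform convergences $f_{\nu}\to f_{0}$, $f_{\nu}^{*}\to f_{0}^{*}$, the quantities $f_{\nu}(\rhonu)$, $f_{\nu}^{*}(\pnu)$, $f_{\nu}^{*}(\wnu)$ and $\rhonu\pnu$ are bounded in $L^{\infty}_{\loc}([0,\infty)\times\RR^{d})$ uniformly in $\nu$; together with the $L^{\infty}_{\loc}([0,\infty);L^{1}(\RR^{d}))$ bound on $\rhonu$ and the elementary estimates $\int_{\RR^d}\rhonu\pnu\le B_{p}\|\rhonu\|_{L^{1}}$, $\int_{\RR^d}|f_{\nu}(\rhonu)|\lesssim 1+\|\rhonu\|_{L^{1}}$, interpolation gives uniform bounds in $L^{\infty}_{\loc}([0,\infty);L^{q}(\RR^{d}))$ for every finite $q$. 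Passing to a further subsequence we obtain weak limits $f_{\nu}(\rhonu)\rightharpoonup F$, $f_{\nu}^{*}(\pnu)\rightharpoonup G_{p}$, $f_{\nu}^{*}(\wnu)\rightharpoonup G_{w}$, $\rhonu\pnu\rightharpoonup\Pi$ in $L^{q}_{\loc}([0,\infty);L^{q}(\RR^{d}))$ for every $q\in[1,\infty)$.

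Next I would extract the convexity inequalities. For each $b\ge0$, $f_{\nu}(\rhonu)\ge b\rhonu-f_{\nu}^{*}(b)$ pointwise; testing against a nonnegative function, sending $\nu\to0$ (with $f_{\nu}^{*}(b)\to f_{0}^{*}(b)$, a finite constant) and taking the supremum over $b$ yields $F\ge f_{0}^{**}(\rho)=f_{0}(\rho)$ a.e.; symmetrically, starting from $f_{\nu}^{*}(\pnu)\ge a\pnu-f_{\nu}(a)$ for $a\in\dom(f_{0})$ gives $G_{p}\ge f_{0}^{*}(p)$ a.e. I would also note that $f_{\nu}^{*}$ is Lipschitz on $[0,B_{p}]$ with constant $\sup\partial f_{\nu}^{*}(B_{p})$, which is uniformly bounded in $\nu$ by Lemma~\ref{lem:p_rho_bounds}; since $\wnu-\pnu\to0$ strongly in $L^{2}_{\loc}$ (Lemma~\ref{lem:compactness}), this forces $f_{\nu}^{*}(\wnu)-f_{\nu}^{*}(\pnu)\to0$ strongly, hence $G_{w}=G_{p}$. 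Passing to the weak limit in the Young equality gives $\Pi=F+G_{p}$.

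The heart of the argument, and the step I expect to be the main obstacle, is showing $\Pi=\rho p$. This cannot follow from the weak convergences of Lemma~\ref{lem:compactness} alone: already for $f_{\nu}(a)=\tfrac12 a^{2}$ one has $\pnu=\rhonu$ but $\rhonu^{2}\not\rightharpoonup\rho^{2}$ in general, so the uniform bounds on $\sqrt{\nu}\,\Delta\wnu$ (equivalently on $\wnu-\pnu$) and on $\partial_{t}\rhonu$ are essential. Since $\rhonu(\pnu-\wnu)\to0$ strongly in $L^{2}_{\loc}$, it suffices to prove $\rhonu\wnu\rightharpoonup\rho p$, and here I would invoke Aubin--Lions--Simon: on each slab $[0,T]\times B_{R}$, $\rhonu$ is bounded in $L^{2}$ while $\partial_{t}\rhonu$ is bounded in $L^{2}(0,T;H^{-1}(B_{R}))$ by Lemma~\ref{lemma:w_bounds}, and the embedding $L^{2}(B_{R})\hookrightarrow H^{-1}(B_{R})$ is compact, so $\rhonu\to\rho$ strongly in $L^{2}(0,T;H^{-1}(B_{R}))$; pairing this against $\wnu\rightharpoonup p$ weakly in $L^{2}(0,T;H^{1}(B_{R}))$ (Lemma~\ref{lem:compactness}) in the $H^{-1}$--$H^{1}$ duality gives $\int\rhonu\wnu\,\varphi\to\int\rho p\,\varphi$ for every $\varphi\in C^{\infty}_{c}$, i.e.\ $\Pi=\rho p$. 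Finally I would close the loop: from $\rho p=\Pi=F+G_{p}$, $F\ge f_{0}(\rho)$, $G_{p}\ge f_{0}^{*}(p)$ and Young's inequality $f_{0}(\rho)+f_{0}^{*}(p)\ge\rho p$, every inequality must be an equality, so $F=f_{0}(\rho)$, $G_{p}=G_{w}=f_{0}^{*}(p)$ a.e., $\rhonu\pnu\rightharpoonup\rho p$, and $\rho p=f_{0}(\rho)+f_{0}^{*}(p)$ (equivalently $p\in\partial f_{0}(\rho)$); since all limits are now identified, these hold along the subsequence of Lemma~\ref{lem:compactness} with no further extraction needed.
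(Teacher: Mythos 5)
Your proposal follows essentially the same route as the paper: identify the weak limit of $\rhonu\pnu$ by reducing to $\rhonu\wnu$ (using $\pnu-\wnu\to 0$ strongly) and a compensated-compactness/Aubin--Lions pairing of $\partial_t\rhonu\in L^2_{\loc}H^{-1}_{\loc}$ against $\wnu\in L^2_{\loc}H^1_{\loc}$, then sandwich the weak limits of $f_\nu(\rhonu)$ and $f^*_\nu(\pnu)$ between the lower bounds from convexity and the upper bound from Young's inequality applied to $\Pi=\rho p$. Your explicit biconjugate argument for $F\ge f_0(\rho)$ and $G_p\ge f_0^*(p)$ is a clean way to make precise what the paper states in one line.

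One point deserves more care: the lemma asserts weak convergence in $L^q_{\loc}([0,\infty);L^q(\RR^d))$, i.e.\ \emph{global in space}, and for $q=1$ this requires tightness (no mass escaping to spatial infinity), not just uniform bounds plus a subsequence extraction. For $\rhonu\pnu$, $f_\nu(\rhonu)$ and $f^*_\nu(\pnu)$ this follows from the domination $0\le f_\nu(\rhonu)+f^*_\nu(\pnu)=\rhonu\pnu\le B_p\rhonu$ together with the tightness of $\rhonu(t,\cdot)$ from Lemma~\ref{lem:compactness}, which you should invoke explicitly. For $f^*_\nu(\wnu)$ your Lipschitz estimate $|f^*_\nu(\wnu)-f^*_\nu(\pnu)|\le L|\wnu-\pnu|$ only yields convergence on compact space-time sets, since $|\wnu-\pnu|\to 0$ is only known in $L^q_{\loc}([0,\infty)\times\RR^d)$; to get tightness of $f^*_\nu(\wnu)$ the paper instead uses the convexity inequality
\[
\int_0^T\!\!\!\int_{\RR^d} f^*_\nu(\wnu)\le \int_0^T\!\!\!\int_{\RR^d} f^*_\nu(\pnu)-f^{*\prime}_\nu(\wnu)(\pnu-\wnu)=\int_0^T\!\!\!\int_{\RR^d} f^*_\nu(\pnu)-\nu f^{*\prime\prime}_\nu(\wnu)|\nabla\wnu|^2\le \int_0^T\!\!\!\int_{\RR^d} f^*_\nu(\pnu),
\]
which transfers the tightness of $f^*_\nu(\pnu)$ to $f^*_\nu(\wnu)$. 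With that addition your argument is complete.
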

\begin{proof}
Since $\pnu-\wnu$ converges strongly to zero in $L^q_{\loc}([0,\infty)\times\RR^d)$ and all of the quantities are uniformly bounded in $L^{\infty}([0,\infty)\times\RR^d)$, the weak convergence of $\rhonu\pnu$ to $\rho p$ in $L^q_{\loc}([0,\infty)\times \RR^d)$ will follow from the weak convergence of $\rhonu\wnu$ to $\rho p$. 
From Lemma~\ref{lemma:w_bounds} (iii), $\rhonu$ is uniformly bounded in $H^1_{\loc}([0,\infty);H^{-1}_{\loc}(\RR^d))$, therefore, 
$\rhonu$ is strongly compact in $C_{\loc}([0,\infty); H^{-1}_{\loc}(\RR^d))$.
Since $\wnu$ is uniformly bounded in $L^2_{\loc}([0,\infty);H^1_{\loc}\RR^d))$,  a compensated compactness argument implies that along a subsequence, $\wnu(\rhonu-\rho)$ converges weakly to zero in $L^q_{\loc}([0,\infty)\times\RR^d)$ for any $q\in [1,\infty)$.  We can then upgrade this to weak convergence in  $L^q_{\loc}([0,\infty);L^q(\RR^d))$ since the $\rhonu$ form a tight family of measures (along our selected subsequence).

\smallskip

To prove the remaining properties, we first note that the uniform bound on $\rhonu\pnu$ in $L^{\infty}([0,\infty)\times\RR^d)$ together with the duality relation $\rhonu\pnu=f_{\nu}(\rhonu)+f^*_{\nu}(\pnu)$ implies that $f_{\nu}(\rhonu)$ and $f^*_{\nu}(\pnu)$ are each uniformly bounded in $L^{\infty}([0,\infty)\times\RR^d)$ and form a tight family of measures along an appropriate subsequence.  Therefore, after passing to another subsequence, $f_{\nu}(\rhonu)$ and $f^*_{\nu}(\pnu)$ must converge weakly in $L^q_{\loc}([0,\infty);L^q(\RR^d))$ to limits $A$ and $A^*$ respectively.   
The duality relation $\rhonu\pnu=f_{\nu}(\rhonu)+f^*_{\nu}(\pnu)$ and the weak convergence of $\rhonu\pnu$ to $\rho p$ implies that $A+A^*=\rho p$ and then we can apply Young's inequality to see that $A+A^*\leq f_0(\rho)+f_{0}^*(p)$ almost everywhere.  On the other hand, the convexity of $f_{\nu}, f^*_{\nu}$ and their respective convergence to $f_0, f^*_0$, implies that $A\geq f_0(\rho)$ and $A^*\geq f^*_0(p)$ almost everywhere. This is only possible if $A=f_0(\rho)$ and $A^*=f^*_0(p)$. 

\smallskip

 By Lemma~\ref{lem:compactness}, $|w_\nu - p_\nu|\to 0$ strongly, therefore
$|f_{\nu}^*(\wnu)-f^*_{\nu}(\pnu)|\leq \sup\partial f^*_{\nu}(B_p)|\wnu-\pnu|$ implies that $|f_{\nu}^*(\wnu)-f^*_{\nu}(\pnu)|$ converges strongly to zero in $L^q_{\loc}([0,\infty)\times\RR^d)$. Thus, $f^*_{\nu}(\wnu)$ converges to $f_0(p)$ weakly in $L^q_{\loc}([0,\infty)\times\RR^d)$.  This can then be upgraded to weak convergence in $L^q_{\loc}([0,\infty);L^q(\RR^d))$ by noting that for any $T\geq 0$, we can use convexity to get
\[
\int_0^T\!\!\!\int_{\RR^d} f^*_{\nu}(\wnu)\leq \int_0^T\!\!\!\int_{\RR^d} f^*_{\nu}(\pnu)-f^{*\prime}_{\nu}(\wnu)(\pnu-\wnu)=\int_0^T\!\!\!\int_{\RR^d} f^*_{\nu}(\pnu)-\nu f^{*\prime\prime}_{\nu}(\wnu)|\nabla \wnu|^2.
\]
So the tightness of $ f^*_{\nu}(\pnu)$ implies the tightness of $ f^*_{\nu}(\wnu)$ (note if $f^*_{\nu}$ is not twice differentiable, one can first approximate $f^*_{\nu}$ from below by a $C^2$ convex function, run the above argument, and then use monotone convergence to deduce the tightness property).
\end{proof}

\section{Identification of the limit equation and strong compactness}
\label{sec: strong comp} 
From the compactness results of Lemma~\ref{lem:compactness}, we can pass to the limit in \eqref{eq: rho} to show that $\rho, m, p, R$ satisfy the following equations in the weak sense 
\begin{equation}\label{eq:m}
    \partial_t \rho - \nabla \cdot  m = R, \quad \rho\in \partial f_0(p),
\end{equation}
where we note that the condition $\rho\in \partial f_0(p)$ is equivalent to the condition $\rho p=f_0(\rho)+f^*_0(p)$ established in Lemma \ref{lem:nonlinear_weak}.
In the first part of this Section, we will focus on proving that $m=\rho\nabla p$.  Once this is established, we will be able to use the $H^{-1}$ energy dissipation relation (Corollary~\ref{cor:h-1}) to prove that $\nabla f^*_{\nu}(\wnu)$ converges strongly to $\nabla f_0^*(p)$.  From there, we will show that $\nabla \wnu$ converges strongly to $\nabla p$ and hence $R=G(p)$, which completes the proof of our main theorem.

\subsection{Proving $m=\rho\nabla p$}
\label{sec: m} 

Let us now expose our strategy to prove that $m=\rho \nabla p$, and highlight its relation to our family of energy evolution equations.  Since $\rho\nabla p=\nabla (f_0^*(p))$ and by Lemma \ref{lem:nonlinear_weak} $f_{\nu}^{*}(\wnu)$ converges weakly to $f_0^*(p)$, it is enough to show that $$
\rhonu\nabla \wnu-\nabla (f_{\nu}^*(\wnu))=(\rhonu-f^{*\prime}_{\nu}(\wnu))\nabla \wnu \rightharpoonup 0
$$ in the sense of distributions.  We will actually prove a  strong convergence of the difference to zero in $L^1_{\loc}([0,\infty)\times\RR^d)$.  To see why this quantity may be small, recall that 
$\pnu-\wnu$ converges strongly to zero as $\nu\to 0$ in $L^q_{\loc}([0,\infty)\times\RR^d)$ (Lemma \ref{lem:nonlinear_weak}). 
Hence $\rhonu\in \partial f_{\nu}^*(\pnu)$ and $f^{*\prime}_{\nu}(\wnu)$ should be close if $f^*_0$ is $C^1$.  

But in our application $f^*$ may not be $C^1$. Indeed for the joint limit of Brinkman to Darcy, $f$ is given by \eqref{pme} and  $f^*_{\nu}(b)=b_+^{1+\gamma_{\nu}^{-1}}$ with $\gamma_{\nu}\to \infty$ as $\nu\to 0$. Hence neither $f^*_{\nu}$ and  $f^*_0$ is $C^1$: this lack of differentiability has been the main obstacle for rigorously establishing the joint limit.

\medskip

On the other hand, there is still hope, as convexity of $f^*_{\nu}$ implies that the singular set $S_{\nu}:=\{b: f^*_{\nu} \hbox{ is not } C^1\}$ has measure zero. While it is possible that $\pnu, \wnu$ could concentrate on  $S_{\nu}$, Sobolev regularity of $w_{\nu}$ implies that for any compact set $K\subset [0,\infty)\times\RR^d$,  $|\nabla \wnu|$ gives no mass to $\{(t,x)\in K: \wnu(t,x)\in S_{\nu} \}$ (see for instance \cite{evans_gariepy} Chapter 4). Hence, when we consider the product $(\rhonu-f^{*\prime}_{\nu}(\wnu))\nabla \wnu$, one could at least hope that one of the two terms will stay small almost everywhere. Nonetheless, turning the above wishful intuition into a rigorous argument, in particular one with quantitative control in terms of $\nu>0$, has remained a significant challenge in the literature. 

\medskip

Fortunately, the aforementioned reasoning can be quantified through an appropriate invocation of Proposition \ref{prop:eee} using the EEE  \eqref{eq:eee} therein. More precisely, for any $A\subset \RR$ and for any compact set $K\subset [0,\infty) \times \RR^d$, Proposition \ref{prop:w_gradient_control}, yields the estimate
\begin{equation}\label{est:uniform}
 \iint_{K} \chi_A(\wnu)|\nabla \wnu|^2\leq C |A|,
\end{equation}
where $C=C(K)$ is independent of $\nu$. 
 To use the above bound, note that if we fix a value of $\delta>0$ and take $A=S_{\nu,\delta}:=\{b\in \RR: \sup_{\beta\in [0,B_p]} \frac{|f^{*\prime}_{\nu}(b)-f^{*\prime}_{\nu}(\beta)|}{|b-\beta|}>\frac{1}{\delta} \}$, then a standard maximal function type covering argument implies that $|S_{\nu,\delta}|\lesssim \delta$ (c.f. for instance \cite{stein})  and on the complement of $S_{\nu,\delta}$ we can make $(\rhonu-f^{*\prime}_{\nu}(\wnu))$ small.  After interpolating with an appropriate value of $\delta$, we will be able to show in Proposition \ref{prop:density_swap} that $(\rhonu-f^{*\prime}_{\nu}(\wnu))\nabla \wnu$ converges strongly in $L^1_{\loc}([0,\infty)\times\RR^d)$ to $0$.

\begin{prop}\label{prop:w_gradient_control}
   Let $z_{\nu}:\RR\to\RR$ be a proper, lower semicontinuous, convex function with $z_{\nu}\in W^{2,1}_{\loc}([0,\infty))$ and $z_{\nu}(0)=z_{\nu}'(0)=0$.
  Given any nonnegative test function $\psi\in C^{\infty}([0,\infty)\times\RR^d)$ with compact support in time such that $N(\psi)<\infty$, we have
  \begin{equation*}
 \int_0^{\infty}\!\!\! \int_{\RR^d}  \psi z''_{\nu}(\wnu)|\nabla \wnu|^2\lesssim  N(\psi)\sup_{b\in [0,B_p]} |z'_{\nu}(b)|,
\end{equation*}
where we recall that $N(\psi)$ is defined in \eqref{eq:n_psi} and $B_p$ is the pressure bound from Lemma \ref{lem:p_rho_bounds}. 
\end{prop}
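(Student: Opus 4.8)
The plan is to apply the energy evolution equation, Proposition~\ref{prop:eee}, with the given $z_{\nu}$ and a corresponding $e_{\nu}$ obtained from Lemma~\ref{lem:e_from_z}, and then to discard or bound each of the terms appearing in \eqref{eq:eee} except the dissipation term $\int \psi z''_{\nu}(\wnu)|\nabla \wnu|^2$, which is exactly the quantity we want to control. First I would use Lemma~\ref{lem:e_from_z} to produce a convex $e_{\nu}:\dom(f_{\nu})\to\RR$ with $e_{\nu}(0)=0$ satisfying the coupling relation \eqref{eq:abc_equation} with the given $z_{\nu}$; since $z'_{\nu}(b)=0$ for $b$ near $0$ (as $z_{\nu}(0)=z_{\nu}'(0)=0$ and $z_{\nu}$ is convex and nonnegative on $[0,\infty)$ — actually one needs a tiny bit more care here, see below), the integrand $z'_{\nu}(f'_{\nu}(\alpha))/\alpha^2$ defining $e_{\nu}$ is integrable near $a_1$ and $e_{\nu}$ is well-defined. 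Then Proposition~\ref{prop:eee} applied with this pair $(e_{\nu},z_{\nu})$ and the given test function $\psi$ gives the identity \eqref{eq:eee}.

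Next I would estimate the terms in \eqref{eq:eee}. The target term $\int\int \psi z''_{\nu}(\wnu)|\nabla\wnu|^2$ is nonnegative, and so is the friction term $\int\int\psi(z'_{\nu}(\pnu)-z'_{\nu}(\wnu))\frac{\pnu-\wnu}{\nu}$ by convexity of $z_{\nu}$; the latter can be dropped from the left side (moved to the right with a favourable sign, or simply kept as a nonnegative contribution). What remains is to bound the transport-type terms $\int\int e_{\nu}(\rhonu)(\nabla\wnu\cdot\nabla\psi - \partial_t\psi)$, the diffusion term $-\int\int z_{\nu}(\wnu)\Delta\psi$ (which after integration by parts becomes $\int\int z'_{\nu}(\wnu)\nabla\wnu\cdot\nabla\psi$), the initial term $\int_{\RR^d}\psi\, e_{\nu}(\rhonu^{\mathrm{in}})$, and the growth term $\int\int\psi(e_{\nu}(\rhonu)+z'_{\nu}(\pnu))G(\pnu)$. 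The key structural point is that all of these can be controlled by $N(\psi)$ times $\sup_{b\in[0,B_p]}|z'_{\nu}(b)|$: indeed, by Lemma~\ref{lem:p_rho_bounds} the pressures $\pnu,\wnu$ lie in $[0,B_p]$ almost everywhere, so $|z'_{\nu}(\pnu)|, |z'_{\nu}(\wnu)| \le \sup_{[0,B_p]}|z'_{\nu}|$ pointwise; and the bounds $-\frac{b_0}{a_0}(a_0-2a)_+ \le e_{\nu}(a) \le \frac{z'_\nu(f'_\nu(a))}{a_0}(2a-a_0)_+$ (as derived in the proof of Corollary~\ref{cor:w_control}, with $z'_\nu$ in place of the identity) combined with the uniform $L^\infty$ bound on $\rhonu$ give $|e_{\nu}(\rhonu)| \lesssim \sup_{[0,B_p]}|z'_{\nu}|$ as well. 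The terms containing $\nabla\wnu\cdot\nabla\psi$ are handled by Young's inequality, writing $|\nabla\wnu\cdot\nabla\psi| \le \epsilon\psi|\nabla\wnu|^2 + C\epsilon^{-1}\frac{|\nabla\psi|^2}{\psi}$, absorbing the $\epsilon\psi|\nabla\wnu|^2$ piece into the left-hand side (it is controlled by the dissipation term provided $z''_\nu \ge$ something — actually one must be slightly more careful, since $e_\nu(\rhonu)$ multiplies $\nabla\wnu$, not $z''_\nu(\wnu)$; the correct move is to combine with the friction/diffusion structure, or alternatively to first establish the inequality with $\int\int\psi|\nabla\wnu|^2$ replaced by a term one already controls — I would use the identity $\int\int\psi z'_\nu(\wnu)\nabla\wnu\cdot\nabla\psi$ and Cauchy–Schwarz against $\int\int\psi z''_\nu(\wnu)|\nabla\wnu|^2$, picking up a factor $\sqrt{\sup|z'_\nu|}$ which then combines correctly). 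The growth term is controlled using $|G(\pnu)| \le G(0)$ and the support assumption on $G$, together with the $e_{\nu}$ and $z'_\nu$ bounds above.

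The main obstacle I anticipate is the term $\int_0^\infty\!\!\int_{\RR^d} e_{\nu}(\rhonu)\,\nabla\wnu\cdot\nabla\psi$: the factor $e_{\nu}(\rhonu)$ is only bounded by $\sup_{[0,B_p]}|z'_\nu|$ (not by something small), and pairing it with $\nabla\wnu$ risks producing a term of the form $\big(\sup|z'_\nu|\big)\cdot\big(\int\psi|\nabla\wnu|^2\big)^{1/2}$, which is not obviously absorbable. The resolution is the observation that $e_\nu(\rhonu)$ vanishes wherever $\rhonu \le a_0/2$, i.e.\ wherever $\pnu$ (hence $\wnu$, up to the $\nu\Delta$ correction) is small; more precisely $|e_\nu(\rhonu)| \le \frac{2}{a_0}\rhonu\, z'_\nu(\pnu)\chi_{\{\rhonu > a_0/2\}}$, so one gains a factor of $z'_\nu(\pnu)$ which is itself $\le \sup|z'_\nu|$ — this does not immediately help with the dimensional count. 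The cleaner route, which I would pursue, is to \emph{not} use Young's inequality on this particular term but instead integrate by parts to rewrite $\int\int e_\nu(\rhonu)\nabla\wnu\cdot\nabla\psi$ in a way that reveals cancellation with the diffusion term $z_\nu(\wnu)\Delta\psi$ — essentially re-deriving the structure already present in the proof of Corollary~\ref{cor:w_control} but keeping track of the extra $z'_\nu$ factors — and then close the estimate exactly as in that Corollary, with every appearance of the identity $z'_\nu(b)=b$ there now replaced by the bound $z'_\nu(b)\le\sup_{[0,B_p]}|z'_\nu|$. This yields the stated inequality with an implicit constant depending only on $d$ and $B_p$, and not on $\nu$.
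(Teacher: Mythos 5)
Your overall architecture matches the paper's: build $e_{\nu}$ from $z_{\nu}$ via Lemma~\ref{lem:e_from_z}, invoke Proposition~\ref{prop:eee}, discard the nonnegative friction term, integrate the $z_{\nu}(\wnu)\Delta\psi$ term by parts into $z'_{\nu}(\wnu)\nabla\wnu\cdot\nabla\psi$, and bound $|e_{\nu}(\rhonu)|$, $|z'_{\nu}(\pnu)|$, $|z'_{\nu}(\wnu)|$ by $\sup_{[0,B_p]}|z'_{\nu}|$ using Lemma~\ref{lem:p_rho_bounds}. You also correctly identify the one genuinely delicate point: the cross terms $\int e_{\nu}(\rhonu)\,\nabla\wnu\cdot\nabla\psi$ and $\int z'_{\nu}(\wnu)\,\nabla\wnu\cdot\nabla\psi$ cannot be absorbed into the dissipation $\int\psi\, z''_{\nu}(\wnu)|\nabla\wnu|^2$, since for a general $z_{\nu}$ (e.g.\ $z''_{\nu}=\chi_{S_{\nu,\delta}}$, which is exactly the choice needed later in Proposition~\ref{prop:density_swap}) that dissipation term gives no control whatsoever on $\int\psi|\nabla\wnu|^2$.

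However, neither of your two proposed resolutions of this point actually works, and this is a genuine gap. (a) Cauchy--Schwarz of $\int z'_{\nu}(\wnu)\nabla\wnu\cdot\nabla\psi$ against the dissipation term requires a bound on $\int \frac{z'_{\nu}(\wnu)^2}{z''_{\nu}(\wnu)}\frac{|\nabla\psi|^2}{\psi}$, and the ratio $(z'_{\nu})^2/z''_{\nu}$ is not controlled by $\sup|z'_{\nu}|$ --- it is $+\infty$ wherever $z''_{\nu}$ vanishes while $z'_{\nu}$ does not; moreover the term $e_{\nu}(\rhonu)\nabla\wnu\cdot\nabla\psi$ carries a factor depending on $\rhonu$, not $\wnu$, so it cannot be compared pointwise to $z''_{\nu}(\wnu)$ at all. (b) ``Closing the estimate exactly as in Corollary~\ref{cor:w_control} with $z'_{\nu}(b)=b$ replaced by the bound'' fails because the Young's-inequality absorption in that corollary works only because there $z''_{\nu}\equiv 1$, so the left-hand side dissipation \emph{is} $\tfrac12\int\psi|\nabla\wnu|^2$; you cannot enlarge the left-hand side of an inequality. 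There is also no cancellation between $\int e_{\nu}(\rhonu)\nabla\wnu\cdot\nabla\psi$ and the diffusion term --- the latter merely becomes another cross term of the same type. The actual fix, which the paper uses and which you brush past when you mention ``a term one already controls,'' is to invoke Corollary~\ref{cor:w_control} as an \emph{a priori, already-proved} estimate $\int\psi|\nabla\wnu|^2\lesssim N(\psi)$ (uniform in $\nu$, independent of the current $z_{\nu}$), and then bound every cross term by
\[
\sup_{[0,B_p]}|z'_{\nu}|\int_0^{\infty}\!\!\!\int_{\RR^d}|\nabla\wnu||\nabla\psi|
\leq \sup_{[0,B_p]}|z'_{\nu}|\left(\int_0^{\infty}\!\!\!\int_{\RR^d}\psi|\nabla\wnu|^2\right)^{1/2}\left(\int_0^{\infty}\!\!\!\int_{\RR^d}\frac{|\nabla\psi|^2}{\psi}\right)^{1/2}\lesssim \sup_{[0,B_p]}|z'_{\nu}|\,N(\psi),
\]
with no absorption needed. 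With that replacement your argument closes; without it, the key step is not justified.
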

\begin{proof}
   If we set $e_{\nu}(a)=\int_{a_0}^a \frac{z'_{\nu}(f'_{\nu}(\alpha))}{\alpha^2}\, d\alpha$, then $z_{\nu}$ and $e_{\nu}$ satisfy the coupling relation and we can use Proposition \ref{prop:eee} to deduce the inequality 
    \begin{align*}
\int_0^{\infty}\!\!\! \int_{\RR^d}  \psi z''_{\nu}(\wnu)|\nabla \wnu|^2&\leq    \int_{\RR^d} \psi(0) e_{\nu}(\rhonu^{\mathrm{in}})+\int_0^{\infty}\!\!\! \int_{\RR^d} \psi \big( e_{\nu}(\rhonu)+z'_{\nu}(\pnu)\big)G(\pnu)\\[0.3em]
&\qquad -\int_0^{\infty}\!\!\! \int_{\RR^d}  e_{\nu}(\rhonu)(\nabla \wnu\cdot \nabla \psi-\partial_t \psi)-\int_0^{\infty}\!\!\! \int_{\RR^d}z_{\nu}'(\wnu)\nabla \wnu\cdot \nabla \psi.
\end{align*}
It is clear, by convexity that  
\[
|e_{\nu}(a)|\leq z'_{\nu}\big(f'_{\nu}(\max(a,a_0))\big)\left|1-\frac{a}{a_0}\right|.
\]
Therefore, using our uniform $L^{\infty}$ control on $\rhonu, \pnu, \wnu$ from Lemma \ref{lem:p_rho_bounds}, it follows that
\[
|e_{\nu}(\rhonu)|\leq \left(\frac{\rhonu}{a_0}+1\right)\sup_{b\in [0,B_p]} |z'_{\nu}(b)|,
\]
and hence,
\begin{equation*}
\int_0^{\infty}\!\!\! \int_{\RR^d}  \psi z''_{\nu}(\wnu)|\nabla \wnu|^2\lesssim  \sup_{b\in [0,B_p]}|z'_{\nu}(b)| \bigg(\int_{\RR^d} \psi
(0) +\int_0^{\infty}\!\!\! \int_{\RR^d} \psi +|\nabla \wnu\cdot \nabla \psi-\partial_t \psi|+|\nabla \wnu\cdot \nabla \psi|\bigg).
\end{equation*}
Using Corollary \ref{cor:w_control} and Lemma \ref{lem:p_rho_bounds} again, we conclude. 
\end{proof}

\begin{prop}\label{prop:density_swap}
  Given any nonnegative test function $\psi\in C^{\infty}([0,\infty)\times\RR^d)$ with compact support in time such that $N(\psi)<\infty$, we have
\begin{equation*}
 \int_0^{\infty}\!\!\! \int_{\RR^d}  \psi \big|\rhonu-f_{\nu}^{*\, \prime}(w_{\nu})||\nabla \wnu|\lesssim \nu^{1/6} N(\psi)
\end{equation*}
where we recall that $N(\psi)$ is defined in \eqref{eq:n_psi}. In particular, this implies that $m=\rho\nabla p$ in  \eqref{eq:m}.

\end{prop}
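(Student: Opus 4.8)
The plan is to make rigorous the heuristic given just before Proposition~\ref{prop:w_gradient_control}, by splitting the space--time integral according to whether $w_\nu$ lands where $f_\nu^{*\prime}$ is steep or not. The first step is to record the non-concentration estimate \eqref{est:uniform}: for a Borel set $A\subset\RR$, apply Proposition~\ref{prop:w_gradient_control} with the convex function $z_\nu$ fixed by $z_\nu(0)=z_\nu'(0)=0$ and $z_\nu''=\chi_A$ (smoothing $\chi_A$ and passing to the limit if $C^\infty$ data are required). Since then $0\le z_\nu'(b)=|A\cap[0,b]|\le|A|$ for $b\ge0$, Proposition~\ref{prop:w_gradient_control} gives
\[
\int_0^\infty\!\!\!\int_{\RR^d}\psi\,\chi_A(w_\nu)\,|\nabla w_\nu|^2\ \lesssim\ N(\psi)\,|A|
\]
uniformly in $\nu$, for every admissible $\psi$.

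Next I fix $\delta>0$ and set $S_{\nu,\delta}:=\{b\in\RR:\ \sup_{\beta\in[0,B_p]}|f_\nu^{*\prime}(b)-f_\nu^{*\prime}(\beta)|/|b-\beta|>1/\delta\}$. Because $f_\nu^{*\prime}$ is nondecreasing with total variation over $[0,B_p]$ at most $\sup\partial f_\nu^*(B_p)$ --- a quantity that stays bounded as $\nu\to0$ by Lemma~\ref{lem:p_rho_bounds} --- a Vitali covering argument gives $|S_{\nu,\delta}\cap[0,B_p]|\lesssim\delta$, uniformly in $\nu$. Recalling $0\le w_\nu,p_\nu\le B_p$ and $\rho_\nu,f_\nu^{*\prime}(w_\nu)\le\sup\partial f_\nu^*(B_p)=:C_\rho$ from Lemma~\ref{lem:p_rho_bounds}, I split the left-hand side as $\mathrm{I}+\mathrm{II}$ according to whether $w_\nu\in S_{\nu,\delta}$ or not. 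For the ``bad'' piece $\mathrm{I}$, bound $|\rho_\nu-f_\nu^{*\prime}(w_\nu)|\le 2C_\rho$ and apply Cauchy--Schwarz so as to keep $\chi_{S_{\nu,\delta}}(w_\nu)$ inside the $|\nabla w_\nu|^2$ integral; using $\int\psi\,\chi_{S_{\nu,\delta}}(w_\nu)\le\|\psi\|_{L^1}\le N(\psi)$ together with the non-concentration estimate applied with $A=S_{\nu,\delta}$ (so $|A|\lesssim\delta$) then yields $\mathrm{I}\lesssim C_\rho\,N(\psi)\,\delta^{1/2}$.

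For the ``good'' piece $\mathrm{II}$, on $\{w_\nu\notin S_{\nu,\delta}\}$ monotonicity of the maximal monotone operator $\partial f_\nu^*$ squeezes $\rho_\nu\in\partial f_\nu^*(p_\nu)$ between the one-sided derivatives of $f_\nu^{*\prime}$ at $p_\nu$, and evaluating the defining Lipschitz bound of $S_{\nu,\delta}^c$ at $\beta=p_\nu\in[0,B_p]$ gives $|\rho_\nu-f_\nu^{*\prime}(w_\nu)|\le\delta^{-1}|p_\nu-w_\nu|$ almost everywhere (the null event that $w_\nu$ coincides exactly with a jump point of $f_\nu^{*\prime}$ carries $\nabla w_\nu=0$ a.e.\ and is harmless). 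Hence $\mathrm{II}\le\delta^{-1}\int\psi\,|p_\nu-w_\nu|\,|\nabla w_\nu|$, and using $(p_\nu-w_\nu)^2=\nu\,(\nu|\Delta w_\nu|^2)$, Cauchy--Schwarz, and the uniform bound $\int\psi|\nabla w_\nu|^2+\nu\int\psi|\Delta w_\nu|^2\lesssim N(\psi)$ from Corollary~\ref{cor:w_control} and Lemma~\ref{lem:p_rho_bounds}, one gets $\mathrm{II}\lesssim\delta^{-1}\nu^{1/2}N(\psi)$. Altogether the integral is $\lesssim N(\psi)(\delta^{1/2}+\delta^{-1}\nu^{1/2})$, and the choice $\delta=\nu^{1/3}$ produces the stated bound $\nu^{1/6}N(\psi)$.

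Finally, for $m=\rho\nabla p$: by the chain rule (valid since $f_\nu^*$ is Lipschitz on the range $[0,B_p]$ of $w_\nu$ and $w_\nu\in H^1_{\loc}$), $(\rho_\nu-f_\nu^{*\prime}(w_\nu))\nabla w_\nu=\rho_\nu\nabla w_\nu-\nabla(f_\nu^*(w_\nu))$, and the estimate just proved shows this tends to $0$ in $L^1_{\loc}([0,\infty)\times\RR^d)$; since $\rho_\nu\nabla w_\nu\rightharpoonup m$ (Lemma~\ref{lem:compactness}) and $f_\nu^*(w_\nu)\rightharpoonup f_0^*(p)$ (Lemma~\ref{lem:nonlinear_weak}), it follows that $m=\nabla(f_0^*(p))=\rho\nabla p$ in the sense of distributions, using $\rho\in\partial f_0^*(p)$ and that $\nabla p=0$ a.e.\ on the preimage of the Lebesgue-null set where $f_0^*$ fails to be differentiable. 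I expect the good-set estimate in $\mathrm{II}$ to be the crux: turning the informal statement ``$\rho_\nu$ and $f_\nu^{*\prime}(w_\nu)$ are close'' into a genuine pointwise a.e.\ inequality despite $f_\nu^*$ being truly non-$C^1$ (and degenerating as $\gamma_\nu\to\infty$) is precisely the point where the flexibility of the energy evolution equation --- through the non-concentration estimate valid for an \emph{arbitrary} Borel set $A$ --- is indispensable; everything else is interpolation bookkeeping.
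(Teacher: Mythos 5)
Your proposal is correct and follows essentially the same route as the paper: the same singular set $S_{\nu,\delta}$ built from the maximal function of $(f_\nu^{*})''$ with $|S_{\nu,\delta}|\lesssim\delta$, the same good/bad splitting (good set controlled by $\delta^{-1}|p_\nu-w_\nu|\,|\nabla w_\nu|$ with $p_\nu-w_\nu=-\nu\Delta w_\nu$, bad set by Cauchy--Schwarz plus Proposition~\ref{prop:w_gradient_control} applied with $z_\nu''=\chi_{S_{\nu,\delta}}$), the same optimization $\delta=\nu^{1/3}$, and the same identification of $m$ via $\nabla(f_\nu^*(w_\nu))\rightharpoonup\nabla(f_0^*(p))=\rho\nabla p$. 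Your handling of the subdifferential at non-differentiability points of $f_\nu^*$ (squeezing $\rho_\nu$ between one-sided derivatives) is a minor cosmetic variant of the paper's definition of $S_{\nu,\delta}$ directly in terms of $|\partial f_\nu^*(b)-\partial f_\nu^*(\beta)|$.
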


\smallskip
 
\begin{remark}   Note that when $f^*_{\nu}$ is uniformly Lipschitz, one  would obtain a much better convergence rate of $\nu^{1/2}$ instead of $\nu^{1/6}$.
\end{remark}
\begin{remark}
    It is likely that the generic rate could be improved to $\nu^{1/4}$ using the entropy dissipation relation in  Appendix~\ref{app: eee powers}.
\end{remark}

\begin{proof}

Let $B_p$ be the uniform $L^{\infty}([0,\infty)\times\RR^d)$ bound on $\wnu, \pnu$ from Lemma \ref{lem:p_rho_bounds}.
Fix some $\delta>0$ and define 
\[
S_{\nu,\delta}:=\left\{b\in [0,B_p]: \sup_{\beta\in [0,B_p]} \frac{|\partial f_{\nu}^*(b) -\partial f_{\nu}^*(\beta) | }{|b-\beta|}>\frac{1}{\delta}\right\},
\]
where we interpret
\[
|\partial f_{\nu}^*(b) -\partial f_{\nu}^*(\beta) |=\max\Big( |\sup \partial f_{\nu}^*(b)-\inf \partial f_{\nu}^*(\beta)|\, , \,  |\sup \partial f_{\nu}^*(\beta)- \inf \partial f_{\nu}^*(b)|\Big).
\]
Note that $\sup_{\beta\in [0,B_p]} \frac{|\partial f_{\nu}^*(b) -\partial f_{\nu}^*(\beta) | }{|b-\beta|}$ is the maximal function of the measure valued second derivative of $f_{\nu}^*$.
A standard maximal function argument then implies that there exists a constant $C>0$ such that
\[
|S_{\nu,\delta}|\leq C\delta \sup\partial f^*_{\nu}(B_p). 
\]
Now we can estimate
\[
\int_0^{\infty}\!\!\! \int_{\RR^d}  \psi \big|\rhonu-f_{\nu}^{*\, \prime}(w_{\nu})\big||\nabla \wnu|\leq \delta^{-1}I_1(\nu)+I_2(\nu,\delta)^{1/2}\norm{\psi}_{L^1([0,\infty)\times \RR^d)}^{1/2}\norm{\rhonu}_{L^{\infty}([0,\infty)\times\RR^d)}^{1/2}
\]
where 
\[
 I_1(\nu):=\int_0^{\infty}\!\!\! \int_{\RR^d} \psi|\pnu-\wnu||\nabla\wnu|,\quad I_2(\nu,\delta):=\int_0^{\infty}\!\!\! \int_{\RR^d} \psi \chi_{S_{\nu,\delta}}(\wnu)|\nabla w_{\nu}|^2.
\]
Recalling that $\pnu-\wnu=-\nu\Delta \wnu$, we have the straightforward estimate
\[
I_1(\nu)\leq \nu^{1/2}\norm{\rhonu}_{L^{\infty}([0,\infty)\times\RR^d)}\norm{\psi\nabla \wnu}_{L^2([0,\infty)\times\RR^d)}\norm{\nu^{1/2}\psi\Delta \wnu}_{L^2([0,\infty)\times\RR^d)}.
\]
We can then use Corollary \ref{cor:w_control} and the $L^{\infty}$ bounds from Lemma \ref{lem:p_rho_bounds}, to obtain the estimate
\[
I_1(\nu)\lesssim \nu^{1/2}  N(\psi).
\]
For $I_2$, we can use Proposition \ref{prop:w_gradient_control} with $z_{\nu}'' = \chi_{S_{\nu}, \delta}$, to get
\[
I_2(\nu,\delta)\lesssim N(\psi)\sup|z'_{\nu}| \lesssim N(\psi) |S_{\nu,\delta}|\lesssim \delta N(\psi).  
\]
Combining our two estimates, it follows that
\[
\int_0^{\infty}\!\!\! \int_{\RR^d}  \psi \big(\rhonu-f_{\nu}^{*\, \prime}(w_{\nu})\big)_+|\nabla \wnu|\lesssim N(\psi)\left( \frac{\nu^{1/2}}{\delta} + \delta^{1/2}\right) .
\]
Up to constants, the optimal choice for $\delta$ is $\delta=\nu^{1/3}$, which then gives the desired bound.

For the final claim, we note that the above estimate implies that $\rhonu\nabla \wnu-\nabla (f^*_{\nu}(\wnu))$ converges strongly to zero in $L^1_{\loc}([0,\infty)\times\RR^d)$.  On the other hand, $\nabla (f^*_{\nu}(\wnu))$ converges weakly in duality with $C^{\infty}_c([0,\infty)\times\RR^d)$ test functions to $\nabla (f_0^*(p))$.  From the duality relation $\rho p=f_0(\rho)+f^*_0(p)$, we must have $\nabla (f^*_0(p))=\rho\nabla p$ almost everywhere.  Thus, $m=\rho\nabla p$.
\end{proof}

\subsection{Strong compactness}

\label{sec: final}

Since we now know that $m=\rho\nabla p$, it follows from equation \eqref{eq:m} that $\rho$ and $p$ are weak solutions to the equation 
    \begin{equation}\label{eq: limit new}
    \partial_t \rho-\nabla \cdot (\rho\nabla p)=R, \quad \rho\in \partial f_0(p).
    \end{equation}
To conclude the proof of Theorem~\ref{theorem} it now remains to establish the strong convergence of $\nabla \wnu$ to $\nabla p$, which in particular will yield that $R=\rho G(p)$ (Proposition~\ref{prop:strong}). 
\smallskip

An important element of the proof is  $H^{-1}$ energy dissipation equation, Corollary~\ref{cor:h-1}, which yields strong convergence of $\nabla (f^*_{\nu}(\wnu))$  to $\nabla (f^*_0(p))$ (Proposition~\ref{prop:nablafstar}). From here, the strong convergence of $\nabla \wnu$ to $\nabla p$ will follow from the invertibility of $f^*_0$ on $(0,\infty)$. 
 
 \medskip

\begin{prop}\label{prop:nablafstar}
    $\nabla f^*_{\nu}(w_{\nu})$ converges strongly to $\nabla f^*_0(p)$ in $L^2_{\loc}((0,\infty); L^2(\RR^d))$.
\end{prop}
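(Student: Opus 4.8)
The plan is to compare the energy dissipation of the approximate flow with that of the limit, in the spirit of \cite{SS}, using the $H^{-1}$ energy evolution equation of Corollary~\ref{cor:h-1} as the dissipation identity. First I would note that the \emph{weak} convergence $\nabla f^*_{\nu}(\wnu)\rightharpoonup \nabla f^*_0(p)$ in $L^2_{\loc}((0,\infty)\times\RR^d)$ is already in hand: discarding the nonnegative friction term in Corollary~\ref{cor:h-1} and using that $h_{\nu}\ge 0$ (each $h_{\nu}$ is convex with $h_{\nu}(0)=h_{\nu}'(0)=0$) yields a uniform $L^2_{\loc}$ bound on $\nabla f^*_{\nu}(\wnu)$, while Proposition~\ref{prop:density_swap}, together with $\rhonu\nabla\wnu\rightharpoonup m=\rho\nabla p=\nabla f^*_0(p)$ from Lemma~\ref{lem:compactness}, pins down the limit. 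Since the convergence is already weak in $L^2_{\loc}$, strong convergence is equivalent to convergence of the squared norms, i.e.\ to the single inequality
\[
\limsup_{\nu\to 0}\ \iint \eta\,|\nabla f^*_{\nu}(\wnu)|^2\ \le\ \iint \eta\,|\nabla f^*_0(p)|^2
\]
for every nonnegative $\eta=\eta(t)\in C^\infty_c((0,\infty))$ — the restriction to times bounded away from $0$ is exactly what the statement asks for — so the whole proof reduces to this upper bound.

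Next I would establish the limiting $H^{-1}$ energy dissipation \emph{inequality}. Since $m=\rho\nabla p=\nabla f^*_0(p)\in L^2_{\loc}$ and $\rho\in\partial f_0(p)$ (equivalently $\rho p=f_0(\rho)+f^*_0(p)$, Lemma~\ref{lem:nonlinear_weak}), one has $f^*_0(p)\in\partial h_0(\rho)$; testing the limit equation $\partial_t\rho-\Delta f^*_0(p)=R$ from \eqref{eq: limit new} against a time--space mollification of $\eta f^*_0(p)$ (legitimate because $f^*_0(p)\in L^2_{\loc}((0,\infty);H^1_{\loc})$, $\partial_t\rho\in L^2_{\loc}(H^{-1}_{\loc})$, $\rho\in C_{\loc}(H^{-1}_{\loc})\cap L^\infty$) and using the chain-rule inequality $\langle f^*_0(p),\partial_t\rho\rangle\le \tfrac{d}{dt}\!\int_{\RR^d} h_0(\rho)$ for the convex $h_0$ gives
\[
\iint \eta\,|\nabla f^*_0(p)|^2\ \ge\ \iint h_0(\rho)\,\partial_t\eta+\eta\, f^*_0(p)\,R,
\]
where $R$ is kept abstract as the weak limit of $\rhonu G(\pnu)$ (it need not be identified with $\rho G(p)$ at this stage).

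Then I would pass to the limit in Corollary~\ref{cor:h-1} with the cutoff $\eta$, dropping the nonnegative friction term, to get $\iint\eta\,|\nabla f^*_{\nu}(\wnu)|^2\le \iint h_{\nu}(\rhonu)\,\partial_t\eta+\eta\,\rhonu f^*_{\nu}(\pnu)G(\pnu)$. For the energy term I would first prove, by the Young-inequality squeeze of Lemma~\ref{lem:nonlinear_weak} applied to the identity $\rhonu f^*_{\nu}(\pnu)=h_{\nu}(\rhonu)+h^*_{\nu}(f^*_{\nu}(\pnu))$ (using the compensated-compactness convergence $\rhonu f^*_{\nu}(\pnu)\rightharpoonup \rho f^*_0(p)$), that $h_{\nu}(\rhonu)\rightharpoonup h_0(\rho)$ weakly and tightly in $L^q_{\loc}$; because $\partial_t\eta$ depends only on $t$, this already gives $\iint h_{\nu}(\rhonu)\partial_t\eta\to\iint h_0(\rho)\partial_t\eta$, which neatly avoids any pointwise-in-time energy convergence or energy well-preparedness. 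Feeding this, the convergence of the reaction term (see below), weak lower semicontinuity of $\iint\eta|\nabla f^*_{\nu}(\wnu)|^2$, and the limiting inequality above into one another squeezes $\iint\eta|\nabla f^*_0(p)|^2\le\liminf\le\limsup\le\iint\eta|\nabla f^*_0(p)|^2$, which gives the strong convergence (and, as a byproduct, the vanishing of the friction term).

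The main obstacle is the reaction term: showing $\iint\eta\,\rhonu f^*_{\nu}(\pnu)G(\pnu)\to\iint\eta\, f^*_0(p)\,R$. Using $\pnu-\wnu\to 0$ strongly (Lemma~\ref{lem:compactness}) and uniform continuity of $G$ and of $f^*_{\nu}$ on $[0,B_p]$ (Lemma~\ref{lem:p_rho_bounds}), the left side equals $\iint\eta\, f^*_{\nu}(\wnu)\,[\rhonu G(\pnu)]+o(1)$; splitting $f^*_{\nu}(\wnu)=f^*_0(p)+\big(f^*_{\nu}(\wnu)-f^*_0(p)\big)$, the $f^*_0(p)$-part converges to $\iint\eta f^*_0(p)R$ against the weakly convergent $\rhonu G(\pnu)\rightharpoonup R$ (the definition of $R$), and everything collapses to proving $\iint\eta\,\big(f^*_{\nu}(\wnu)-f^*_0(p)\big)\,\rhonu G(\pnu)\to 0$. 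This is delicate: $f^*_{\nu}(\wnu)-f^*_0(p)$ converges only weakly while $\rhonu G(\pnu)$ is merely bounded in $L^\infty$. The way through is to exploit the extra spatial regularity of $f^*_{\nu}(\wnu)$ — its spatial gradient is bounded in $L^2_{\loc}$ by Corollary~\ref{cor:w_control}/Corollary~\ref{cor:h-1} and, by this very proposition, strongly convergent — either through a div--curl pairing of $f^*_{\nu}(\wnu)$ with $\rhonu G(\pnu)=\partial_t\rhonu-\nabla\cdot(\rhonu\nabla\wnu)$ and the $C_{\loc}(H^{-1}_{\loc})$-compactness of $\rhonu$, or by carrying out this step jointly with the strong-compactness conclusions so that the $L^2_{\loc}$-strong convergence of $f^*_{\nu}(\wnu)$ is used self-consistently. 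I expect this reaction-term estimate to be the crux; once it is available, the rest is the routine lower-semicontinuity bookkeeping described above.
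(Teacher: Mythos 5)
Your overall architecture is the one the paper uses: compare the $H^{-1}$ dissipation relation of Corollary~\ref{cor:h-1} for the Brinkman flow with the corresponding relation for the limit equation, and upgrade the weak convergence of $\nabla f^*_{\nu}(\wnu)$ to strong convergence by proving $\limsup_{\nu}\iint \eta\,|\nabla f^*_{\nu}(\wnu)|^2\le \iint \eta\,|\nabla f^*_0(p)|^2$. Your variants are legitimate: the paper takes $\eta(t)=t$ (so that $\partial_t\eta\ge 0$ and only a one-sided bound on $\iint h_{\nu}(\rhonu)$ is needed, plus lower semicontinuity of the terminal energy), whereas your compactly supported cutoff kills the terminal term at the price of needing two-sided convergence of $\iint h_{\nu}(\rhonu)\partial_t\eta$; your Young-squeeze argument for $h_{\nu}(\rhonu)\rightharpoonup h_0(\rho)$ does deliver this, by exactly the mechanism of Lemma~\ref{lem:nonlinear_weak} applied to $\rhonu f^*_{\nu}(\pnu)=h_{\nu}(\rhonu)+h^*_{\nu}(f^*_{\nu}(\pnu))$. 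Your chain-rule derivation of the limiting dissipation inequality (in the direction $\ge$) is also the right substitute for the identity the paper imports from the literature.

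The genuine gap is the reaction term. You set yourself the task of proving the \emph{two-sided} limit $\iint\eta\,\rhonu f^*_{\nu}(\pnu)G(\pnu)\to\iint\eta\,f^*_0(p)\,R$, correctly flag it as the crux, and then offer two routes that do not close it: using the strong convergence of $f^*_{\nu}(\wnu)$ "self-consistently" is circular (that strong convergence is the content of this very proposition), and the div--curl pairing of $f^*_{\nu}(\wnu)$ against $\rhonu G(\pnu)=\partial_t\rhonu-\nabla\cdot(\rhonu\nabla\wnu)$ fails because the $\partial_t\rhonu$ part would require time regularity of $f^*_{\nu}(\wnu)$, which is unavailable, while the divergence part produces the quantity $\rhonu f^{*\prime}_{\nu}(\wnu)|\nabla\wnu|^2$, which is merely bounded. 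The missing observation is that the squeeze only requires the \emph{upper} bound
\[
\limsup_{\nu\to 0}\iint\eta\,\rhonu G(\pnu)\,f^*_{\nu}(\pnu)\ \le\ \iint\eta\, f^*_0(p)\,R,
\]
and this one-sided bound comes essentially for free from monotonicity: after replacing $f^*_{\nu}(\pnu)$ by $f^*_0(\pnu)$ (uniform convergence on $[0,B_p]$), write
\[
\rhonu G(\pnu)\bigl(f^*_0(\pnu)-f^*_0(p)\bigr)=\rhonu\bigl(G(\pnu)-G(p)\bigr)\bigl(f^*_0(\pnu)-f^*_0(p)\bigr)+\rhonu G(p)\bigl(f^*_0(\pnu)-f^*_0(p)\bigr);
\]
the first summand is pointwise nonpositive because $G$ is decreasing and $f^*_0$ increasing, and the second tends weakly to zero by the same compensated-compactness pairing of $\rhonu$ (compact in $C_{\loc}(H^{-1}_{\loc})$) with the spatially regular $f^*_0(\pnu)\approx f^*_0(\wnu)$ that you already use elsewhere. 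This is exactly the paper's Lemma~\ref{lem:growth_weak_limit}. With that substitution your argument goes through; without it, the reaction-term step as proposed would not.
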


\begin{proof} Choose some time $T>0$.
If we plug in the choice $\eta(t)=t$ into Corollary~\ref{cor:h-1}, we have
\begin{equation}\label{ine 1}
\begin{split} 
\int_{\RR^d} T h_{\nu}(\rhonu(T)) + \int_0^T \!\!\!\int_{\RR^d} t |\nabla f_\nu^*(\wnu)|^2 -h_{\nu}(\rhonu)
&\leq \int_0^T\!\!\! \int_{\RR^d} t \rhonu G(p_\nu)f^*_{\nu}(\pnu),
\end{split}
\end{equation}
where $h_\nu (a)=a f_\nu(a)-2\int_0^a f_\nu(\alpha)\dx \alpha.$  
Set  $h_0(a):=af_0(a)-2\int_0^a f_0(\alpha)\dx \alpha$.
Observe that the analogue of \eqref{ine 1} also holds for the limit variables $\rho$ and $p$ with $h_0$, see for instance \cite{Jac21}:
   \begin{equation}\label{eq 1}
\int_{\RR^d} T h_0(\rho(T))+\int_0^T\!\!\!\int_{\RR^d} t|\nabla f^*_0(p)|^2-h_0(\rho)= \int_0^T\!\!\!\int_{\RR^d} t R f^*_0(p).
\end{equation}
Now we would like to compare \eqref{eq 1} to \eqref{ine 1} in the limit $\nu\to 0$.
By weak lower semicontinuity, it is clear that 
\[
\int_{\RR^d} T h_0(\rho(T)) \leq \liminf_{\nu\to 0} \int_{\RR^d} T h_{\nu}(\rhonu(T)).
\]
However, we would also like to show 
\begin{equation}\label{eq:eta_t_goal}
\int_0^T\!\!\!\int_{\RR^d} -h_{0}(\rho)\leq \liminf_{\nu\to 0} \int_0^T\!\!\!\int_{\RR^d} -h_{\nu}(\rhonu),
\end{equation}
which does not follow from weak lower semicontinuity.
To establish \eqref{eq:eta_t_goal}, we recall from the arguments in Corollary \ref{cor:h-1} that $h_{\nu}(\rhonu)=\rhonu f^*_{\nu}(\pnu)-h^*_{\nu}(f^*_{\nu}(\pnu))$.  Since $f^*_{\nu}(\pnu)$ converges weakly to $f^*_0(p)$ in $L^1_{\loc}([0,\infty);L^1(\RR^d))$, the convexity of $h^*_{\nu}$ implies that the weak $L^1_{\loc}([0,\infty);L^1(\RR^d))$ limit of $-h^*_{\nu}(f^*_{\nu}(\pnu))$ is below $-h^*_0(f^*_0(p))$.
Repeating the compensated compactness argument used in Lemma \ref{lem:growth_weak_limit}, we find that $\rhonu f^*_{\nu}(\pnu)$ converges weakly in $L^1_{\loc}([0,\infty);L^1(\RR^d))$  to $\rho f_0^*(p)$. Thus, \eqref{eq:eta_t_goal} holds.

It remains to show that 
\begin{equation}\label{eq101}
\limsup_{\nu\to 0} \int_0^T\!\!\! \int_{\RR^d} t \rhonu G(p_\nu)f^*_{\nu}(\pnu)\leq \int_0^T\!\!\! \int_{\RR^d} t R f^*_{0}(p).
\end{equation}
which will be a particular case of Lemma \ref{lem:growth_weak_limit} below. 
\medskip

Putting together \eqref{ine 1}, \eqref{eq 1}, \eqref{eq:eta_t_goal} and \eqref{eq101}, it follows that 
\[
\limsup_{\nu\to 0} \int_0^T\!\!\!\int_{\RR^d} t|\nabla f^*_{\nu}(\wnu)|^2 \leq \int_0^T\!\!\!\int_{\RR^d} t|\nabla f^*_0(p)|^2.
\]
 The weak convergence of $f^*_{\nu}(\wnu)$ to $f^*_0(p)$ combined with the above upper semicontinuity property is enough to deduce that $\nabla f^*_{\nu}(\wnu)$ converges strongly to $\nabla f^*_0(p)$ in $L^2_{\loc}((0,T];L^2(\RR^d))$, which completes the argument.  
\end{proof}

The lemma below shows \eqref{eq101} in the particular case $l_{\nu} = f_{\nu}^*$, $l_0=f_0^*$.

\begin{lemma}\label{lem:growth_weak_limit}
If  $\ell_{\nu}:\RR\to\RR$ is a sequence of functions that converge uniformly on compact subsets of $\RR$ to a continuous increasing function $\ell_0:\RR\to\RR$ and $\ell_{\nu}(\pnu)$ converges weakly $L^1_{\loc}([0,\infty)\times\RR^d)$ to a limit $\zeta$ such that $\zeta\leq \ell_0(p)$ almost everywhere, then for any nonnegative test function $\psi\in L^{\infty}_c([0,\infty);L^{\infty}(\RR^d))$
    \[
    \limsup_{\nu\to 0} \int_0^{\infty}\!\!\! \int_{\RR^d}\psi \rhonu G(\pnu) (\ell_{\nu}(\pnu)-\ell_0(p))\leq 0.
    \] 
\end{lemma}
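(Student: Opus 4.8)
The plan is to pass to the limit by combining three ingredients: a uniform replacement of $\ell_\nu$ by $\ell_0$, the monotonicity of $G$ against $\ell_0$, and a compensated‑compactness pairing that exploits the fact (coming from Lemma~\ref{lemma:w_bounds}(iii)) that $\rhonu$ is strongly precompact in $C_{\loc}([0,\infty);H^{-1}_{\loc}(\RR^d))$ while $\nabla\wnu$ is uniformly bounded in $L^2_{\loc}$. Throughout I would freely use the uniform bounds $0\le\pnu,\wnu\le B_p$, the uniform $L^\infty_{\loc}([0,\infty);L^1(\RR^d))$ bound on $\rhonu$, and the tightness of $\{\rhonu\}$ from Lemmas~\ref{lem:p_rho_bounds} and \ref{lem:compactness}.

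First I would reduce to $\ell_0$: since the $\pnu$ take values in the fixed compact interval $[0,B_p]$, $\|\ell_\nu-\ell_0\|_{L^\infty([0,B_p])}\to0$, and as $|G|$ is bounded on $[0,B_p]$ and $\rhonu$ is uniformly bounded in $L^1$, the term $\int\psi\rhonu G(\pnu)(\ell_\nu(\pnu)-\ell_0(\pnu))$ tends to $0$; in particular $\ell_0(\pnu)\rightharpoonup\zeta$ as well. Next, since $G$ is nonincreasing and $\ell_0$ nondecreasing, $(G(a)-G(b))(\ell_0(a)-\ell_0(b))\le0$ for all $a,b$, so taking $a=\pnu$, $b=p$ and multiplying by the nonnegative weight $\psi\rhonu$ gives the pointwise bound
\[
\psi\rhonu G(\pnu)\big(\ell_0(\pnu)-\ell_0(p)\big)\le \psi\rhonu G(p)\big(\ell_0(\pnu)-\ell_0(p)\big).
\]
Hence it suffices to identify $\lim_\nu\int\psi\rhonu G(p)(\ell_0(\pnu)-\ell_0(p))$. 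Its piece $\int\psi G(p)\ell_0(p)\rhonu$ converges to $\int\psi G(p)\ell_0(p)\rho$, because $\psi G(p)\ell_0(p)$ is a fixed bounded function with compact support in time, $\rhonu\rightharpoonup\rho$ weakly in $L^q_{\loc}$, and the tightness of $\rhonu$ controls the spatial tail.

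The crux is to show $\int\psi G(p)\rhonu\ell_0(\pnu)\to\int\psi G(p)\rho\,\zeta$; the obstruction is that $\ell_0$ need not be Lipschitz, so $\ell_0(\pnu)$ carries no Sobolev bound. I would approximate $\ell_0$ on $[0,B_p]$ by nondecreasing Lipschitz (piecewise linear) functions $\ell_0^{(k)}$ with $\|\ell_0^{(k)}-\ell_0\|_{L^\infty([0,B_p])}\le1/k$; replacing $\ell_0$ by $\ell_0^{(k)}$ costs $O(1/k)$ uniformly in $\nu$ (again via the $L^1$ bound on $\rhonu$). For fixed $k$, replacing $\ell_0^{(k)}(\pnu)$ by $\ell_0^{(k)}(\wnu)$ costs $o(1)$ as $\nu\to0$, since $\ell_0^{(k)}$ is Lipschitz and $\pnu-\wnu\to0$ strongly in $L^q_{\loc}$ (Lemma~\ref{lem:compactness}). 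Now $\ell_0^{(k)}(\wnu)$ is bounded in $L^2_{\loc}([0,\infty);H^1_{\loc}(\RR^d))$, as $\nabla\ell_0^{(k)}(\wnu)=(\ell_0^{(k)})'(\wnu)\nabla\wnu$ and $\nabla\wnu$ is uniformly bounded in $L^2_{\loc}$ (Lemma~\ref{lemma:w_bounds}); pairing this weakly convergent sequence against $\rhonu$, which converges strongly in $C_{\loc}([0,\infty);H^{-1}_{\loc}(\RR^d))$, yields $\rhonu\ell_0^{(k)}(\wnu)\rightharpoonup\rho\,\zeta^{(k)}$ in $L^q_{\loc}$, where $\zeta^{(k)}$ is the weak limit of $\ell_0^{(k)}(\wnu)$. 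Since $\ell_0$ is uniformly continuous on $[0,B_p]$ and $\wnu-\pnu\to0$ strongly, $\ell_0(\wnu)\rightharpoonup\zeta$, hence $|\zeta^{(k)}-\zeta|\le1/k$ a.e.; letting $\nu\to0$ and then $k\to\infty$ gives the claim. Collecting everything,
\[
\limsup_{\nu\to0}\int_0^\infty\!\!\!\int_{\RR^d}\psi\rhonu G(\pnu)\big(\ell_\nu(\pnu)-\ell_0(p)\big)\le\int_0^\infty\!\!\!\int_{\RR^d}\psi\,\rho\,G(p)\,\big(\zeta-\ell_0(p)\big),
\]
which is nonpositive because $\psi,\rho\ge0$ and $\zeta\le\ell_0(p)$ wherever $G(p)\ge0$; on $\{p>p_H\}$ one uses that in the situations where the lemma is invoked one in fact has $\zeta=\ell_0(p)$ a.e. (in particular whenever the $\ell_\nu$ are convex, by weak lower semicontinuity, as for $\ell_\nu=f_\nu^*$), so the right-hand side vanishes there.

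The main obstacle I expect is the last display's preparation, i.e.\ passing to the limit in the triple product $\rhonu\,G(p)\,\ell_0(\pnu)$ when $\ell_0$ is merely continuous and increasing: the Lipschitz regularization of $\ell_0$ together with the passage $\pnu\leadsto\wnu$ is precisely what converts this into a legitimate $H^{-1}$–$H^1$ compensated‑compactness pairing, and handling the spatial tails uniformly in $\nu$ (via tightness of $\rhonu$) is a recurring technical point in each of the steps above.
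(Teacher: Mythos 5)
Your proof is correct and follows essentially the same route as the paper's: replace $\ell_\nu$ by $\ell_0$ via uniform convergence on $[0,B_p]$, use the monotonicity inequality $(G(\pnu)-G(p))(\ell_0(\pnu)-\ell_0(p))\le 0$ to reduce to the fixed multiplier $G(p)$, and identify the weak limit of $\rhonu\,\ell_0(\pnu)$ as $\rho\zeta$ by compensated compactness. Where the paper simply asserts that continuity of $\ell_0$ makes $\ell_0(\pnu)$ ``spatially equicontinuous'' and invokes compensated compactness, you supply the missing detail: approximate $\ell_0$ by Lipschitz nondecreasing functions, swap $\pnu$ for $\wnu$ at $o(1)$ cost, and pair $\ell_0^{(k)}(\wnu)$ (bounded in $L^2_{\loc}H^1_{\loc}$) against $\rhonu$ (compact in $C_{\loc}H^{-1}_{\loc}$); this is a faithful and rigorous implementation of the intended argument. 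Your closing caveat about the sign of $G(p)$ on $\{p>p_H\}$ is a genuine observation --- the paper's proof silently has the same issue, since $\zeta\le\ell_0(p)$ alone does not force $\rho\, G(p)(\zeta-\ell_0(p))\le 0$ where $G(p)<0$ --- and your resolution (in the application $\ell_\nu=f_\nu^*$ is convex, so $\zeta=f_0^*(p)$ by Lemma~\ref{lem:nonlinear_weak} and the limit is exactly zero) is the correct fix.
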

\begin{proof}
    Thanks to our assumption on the convergence of $\ell_{\nu}$ to $\ell_0$, it will suffice to prove that
     \[
    \limsup_{\nu\to 0} \int_0^{\infty}\!\!\! \int_{\RR^d} \psi \rhonu G(\pnu) (\ell_{0}(\pnu)-\ell_{0}(p))\leq 0.
    \]
    Since $G$ is decreasing and $\ell_0$ is increasing, we have $(G(\pnu)-G(p))(\ell_0(\pnu)-\ell_0(p))\leq 0$ almost everywhere. Thus, it is enough to show that
    \[
    \limsup_{\nu\to 0} \int_0^{\infty}\!\!\! \int_{\RR^d}\psi \rhonu G(p) (\ell_{0}(\pnu)-\ell_{0}(p))\leq 0,
    \]
     which will follow if we can show that $\rhonu \ell_0(\pnu)$ converges weakly to   $\rho\zeta$.  Since $\ell_0$ is continuous, $\ell_0(\pnu)$ is spatially equicontinuous, thus, compensated compactness implies that $\rhonu \ell_0(\pnu)$ converges weakly to $\rho \zeta$.
     \end{proof}

\begin{prop}\label{prop:strong}
 Up to a subsequence, $\wnu\to p$ strongly in $L^2_{\loc}((0,\infty); H^1_{\loc}(\RR^d))$ and $\pnu\to p$ strongly in $L^2_{\loc}([0,\infty)\times\RR^d)$ as $\nu\to 0$. In particular $R=\rho G(p)$.  
\end{prop}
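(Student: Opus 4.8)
The plan is to read off everything from the strong convergence $\nabla f^*_\nu(\wnu)\to\nabla f^*_0(p)$ in $L^2_{\loc}((0,\infty);L^2(\RR^d))$ of Proposition~\ref{prop:nablafstar}, together with the \emph{a priori} bounds and weak limits already collected in Lemmas~\ref{lem:p_rho_bounds}--\ref{lem:nonlinear_weak}, in three stages: (i) upgrade to strong $L^2_{\loc}$ convergence of $\wnu$ and $\pnu$; (ii) deduce strong $L^2_{\loc}$ convergence of $\nabla\wnu$; (iii) conclude $R=\rho G(p)$. For stage (i), since $\nu\Delta\wnu=\wnu-\pnu\to 0$ strongly in every $L^q_{\loc}$ (Lemma~\ref{lem:compactness}), it suffices to prove $\wnu\to p$ strongly in $L^2_{\loc}([0,\infty)\times\RR^d)$. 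I would first upgrade Proposition~\ref{prop:nablafstar} to the strong $L^2_{\loc}$ convergence of $f^*_\nu(\wnu)$ itself: this sequence is bounded in $L^2_{\loc}((0,\infty);H^1_{\loc}(\RR^d))$ (Proposition~\ref{prop:nablafstar} and Lemma~\ref{lem:p_rho_bounds}), converges weakly to $f^*_0(p)$ (Lemma~\ref{lem:nonlinear_weak}), and its potential time-oscillation is killed using the compactness of $\rhonu$ in $C_{\loc}([0,\infty);H^{-1}_{\loc}(\RR^d))$ (from the proof of Lemma~\ref{lem:nonlinear_weak}) together with the monotone coupling $\rhonu\in\partial f^*_\nu(\pnu)$ and $\wnu-\pnu\to 0$, via a compensated-compactness/monotonicity argument. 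Once $f^*_\nu(\wnu)\to f^*_0(p)$ (and hence $f^*_\nu(\pnu)\to f^*_0(p)$) strongly, pass to a subsequence converging a.e. and use the \emph{invertibility of $f^*_0$ on $(0,\infty)$}: with $\phi_\nu:=(f^*_\nu)^{-1}$, $\phi_0:=(f^*_0)^{-1}$, convexity and the uniform convergence $f^*_\nu\to f^*_0$ give $\phi_\nu\to\phi_0$ locally uniformly on $(0,\infty)$, so $\wnu=\phi_\nu(f^*_\nu(\wnu))\to\phi_0(f^*_0(p))=p$ a.e. on $\{p>0\}$, and similarly $\pnu\to p$ there; on $\{p=0\}$ nonnegativity suffices, since $\wnu\ge 0$ and $\wnu\rightharpoonup 0$ force $\wnu\to 0=p$ in $L^1_{\loc}(\{p=0\})$, hence in every $L^q_{\loc}(\{p=0\})$. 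Together with the uniform $L^\infty$ bound this yields $\wnu\to p$ and $\pnu\to p$ strongly in $L^2_{\loc}$, in particular a.e. along a subsequence.

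For stage (ii), fix a nonnegative $\psi\in C^\infty([0,\infty)\times\RR^d)$, compactly supported in time, with $N(\psi)<\infty$. Since $\nabla\wnu\rightharpoonup\nabla p$ weakly in $L^2_{\loc}$ (Lemma~\ref{lem:compactness}), it suffices to prove $\int\psi|\nabla\wnu|^2\to\int\psi|\nabla p|^2$, and by weak lower semicontinuity only the bound $\limsup_\nu\int\psi|\nabla\wnu|^2\le\int\psi|\nabla p|^2$ remains. Using $|\nabla\wnu|^2=|\nabla f^*_\nu(\wnu)|^2/(f^{*\prime}_\nu(\wnu))^2$ a.e. on $\{\wnu>0\}$ and $\nabla\wnu=0$ a.e. on $\{\wnu=0\}$, I would fix a small $\delta>0$ (a value with $|\{p=\delta\}|=0$, which excludes only countably many $\delta$) and split the integral over $\{\wnu\le\delta\}$, $\{\wnu>\delta,\ p\le\delta/2\}$ and $\{\wnu>\delta,\ p>\delta/2\}$. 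The first piece is $\lesssim\delta N(\psi)$ by Proposition~\ref{prop:w_gradient_control} with $z''_\nu=\chi_{[0,\delta]}$. On $\{\wnu>\delta\}$ one has $f^{*\prime}_\nu(\wnu)\ge c_\nu(\delta):=\inf_{[\delta,B_p]}f^{*\prime}_\nu\to c_0(\delta)>0$ (again by invertibility of $f^*_0$), so $|\nabla\wnu|^2\le c_\nu(\delta)^{-2}|\nabla f^*_\nu(\wnu)|^2$, and $|\nabla f^*_\nu(\wnu)|^2\to|\nabla f^*_0(p)|^2$ in $L^1_{\loc}$, hence equi-integrable; the second piece $\to 0$ because $\wnu\to p$ a.e. makes $\{\wnu>\delta,\ p\le\delta/2\}$ of vanishing measure, and on $\{p>\delta/2\}$ the integrand converges a.e. to $\chi_{\{p>\delta\}}|\nabla p|^2$ (using $\wnu\to p$ and $f^{*\prime}_\nu(\wnu)\to f^{*\prime}_0(p)$ a.e. off the countable set of corners of $f^*_0$, which carries no gradient mass) with an equi-integrable dominating sequence, so Vitali's theorem gives the third piece $\to\int_{\{p>\delta\}}\psi|\nabla p|^2\le\int\psi|\nabla p|^2$. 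Thus $\limsup_\nu\int\psi|\nabla\wnu|^2\le\int\psi|\nabla p|^2+C\delta N(\psi)$, and letting $\delta\to 0$ finishes stage (ii). Combined with stage (i) this gives $\wnu\to p$ strongly in $L^2_{\loc}((0,\infty);H^1_{\loc}(\RR^d))$.

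For stage (iii): by stage (i), $\pnu\to p$ a.e. along a subsequence, so $G(\pnu)\to G(p)$ a.e. by continuity of $G$, and boundedly, hence strongly in every $L^q_{\loc}$. Since $\rhonu\rightharpoonup\rho$ weakly in $L^q_{\loc}$, it follows that $\rhonu G(\pnu)\rightharpoonup\rho G(p)$; but $\rhonu G(\pnu)\rightharpoonup R$ (Lemma~\ref{lem:compactness}), so $R=\rho G(p)$. Plugging this into \eqref{eq: limit new}, $(\rho,p)$ is a weak solution of \eqref{eq:limit} with initial datum $\rho^{\mathrm{in}}$, completing the proof of Theorem~\ref{theorem}.

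The hard part is the first half of stage (i): passing from the gradient-only strong convergence of $f^*_\nu(\wnu)$ to strong $L^2_{\loc}$ convergence of $\wnu$ (equivalently $\pnu$). No uniform bound on $\partial_t\pnu$ or $\partial_t\wnu$ is available — indeed $f''_\nu$ blows up in the singular regime, which is exactly the obstruction that defeated earlier approaches to the joint limit — so one cannot run an Aubin--Lions compactness directly; the time-oscillation of the spatial averages of $f^*_\nu(\wnu)$ must instead be controlled through the compactness of $\rhonu$ in $C_{\loc}H^{-1}_{\loc}$ and the monotone density--pressure coupling, i.e. by compensated compactness rather than by an estimate on a time derivative. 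Everything else (the $\{p>0\}$ vs.\ $\{p=0\}$ split, the chain-rule manipulation in stage (ii), and the passage to the limit in the growth term) is routine once this strong convergence is in hand.
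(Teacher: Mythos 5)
Your argument is correct and follows essentially the same route as the paper: Proposition~\ref{prop:nablafstar} plus the invertibility of $f_0^*$ on $(0,\infty)$ for the strong convergence of $\wnu$ and $\pnu$; the pointwise identity $|\nabla \wnu|^2=|\nabla f^*_{\nu}(\wnu)|^2/f^{*\prime}_{\nu}(\wnu)^2$ on $\{\wnu>\delta\}$ combined with Proposition~\ref{prop:w_gradient_control} applied to $z''_{\nu}=\chi_{[0,\delta]}$ on the complement; and the pairing of the strong (bounded, a.e.) convergence of $G(\pnu)$ with the weak convergence of $\rhonu$ to get $R=\rho G(p)$. The only substantive differences are technical: where you bound $f^{*\prime}_{\nu}$ below by $c_{\nu}(\delta)=\inf_{[\delta,B_p]}f^{*\prime}_{\nu}$ via convexity and invoke Vitali on the three-way split, the paper instead uses the Moreau--Yosida regularization $f^{*\prime}_{\nu,\delta}$ on the two sets $\{\wnu>\beta\}$ and $\{\wnu\leq\beta\}$ and sends $\delta\to 0$ after $\nu\to 0$; both work. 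One remark on the step you single out as the hard part: upgrading the gradient convergence of Proposition~\ref{prop:nablafstar} to strong $L^2_{\loc}$ convergence of $f^*_{\nu}(\wnu)$ itself does not need any time-compactness borrowed from $\rhonu$ or a compensated-compactness/monotonicity argument (which naturally identifies weak limits of products such as $\rhonu\wnu$, not the strong convergence of a single sequence, and risks being circular here). Since $\nabla f^*_{\nu}(\wnu)\to\nabla f^*_0(p)$ strongly in $L^2_{\loc}((0,\infty);L^2(\RR^d))$ \emph{globally in space}, and $u_{\nu}:=f^*_{\nu}(\wnu)-f^*_0(p)$ is uniformly bounded in $L^{\infty}_{\loc}([0,\infty);L^1\cap L^{\infty}(\RR^d))$, the Nash inequality $\norm{u}_{L^2}^{1+2/d}\lesssim \norm{\nabla u}_{L^2}\norm{u}_{L^1}^{2/d}$ applied at a.e.\ time gives $u_{\nu}\to 0$ in $L^2_{\loc}((0,\infty);L^2(\RR^d))$ directly; this is the cleanest way to close the sub-step that both you and the paper leave implicit.
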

\begin{proof}
Proposition~\ref{prop:nablafstar} yields that $f^*_{\nu}(\wnu)\to f^*_0(p)$ in $L^2_{\loc}((0,\infty);H^1(\RR^d))$.  
Since $f^*_0$ is invertible on $(0,\infty)$, it follows that $\wnu\to p$ in $L^2_{\loc}([0,\infty)\times \RR^d)$ and hence Lemma \ref{lem:compactness} gives the convergence of $\pnu$ to $p$ in $L^2_{\loc}([0,\infty)\times\RR^d)$.

It remains to show that $\nabla \wnu$ converges strongly in $L^2_{\loc}([0,\infty)\times\RR^d)$ to $\nabla p$.  Note that, due to its weak convergence, it suffices to show that 
\[
\limsup_{\nu\to 0}\iint_K |\nabla \wnu|^2\leq \iint_K |\nabla p|^2.
\]
for any compact set $K\subset [0,T]\times \RR^d.$
  Given some $\delta>0$ and some $\beta>0$, let us define
\[
f_{\nu,\delta}^*(b):=\inf_{\theta\in \RR} f^*_{\nu}(\theta)+\frac{1}{2\delta}|\theta-b|^2,
\]  
and $K_{\nu, \beta}:=\{(t,x)\in K: \wnu(t,x)>\beta\}$.
Since $f_{\nu,\delta}^{*\prime}(b)$ is lesser or equal than any element in $ \partial f^*_{\nu}(b)$ for all $b$,  it follows that
\[
 |\nabla \wnu|^2\leq  \frac{1}{f_{\nu,\delta}^{*\prime}(\wnu)^2}|\nabla f^*_{\nu}(\wnu)|^2 \quad\hbox{ a.e. on } K_{\nu, \beta}.
\]
Thus, using the strong convergence of $\wnu$ to $p$ and the above estimate, we have 
\begin{align*}
\limsup_{\nu\to 0}\iint_K |\nabla \wnu|^2&\leq  \limsup_{\nu\to 0} \iint_{K_{\nu, \beta}} \frac{1}{f_{\nu,\delta}^{*\prime}(\wnu)^2}|\nabla f^*_{\nu}(\wnu)|^2+\iint_{K\setminus K_{\nu, \beta}} |\nabla f^*_{\nu}(\wnu)|^2\\[0.5em]
&=\iint_{\{(t,x)\in K: p(t,x)>\beta\}} \frac{1}{f_{0,\delta}^{*\prime}(p)^2}|\nabla f^*_0(p)|^2+\limsup_{\nu\to 0} \iint_{K\setminus K_{\nu,\beta}} |\nabla f^*_{\nu}(\wnu)|^2.
\end{align*}
After sending $\delta\to 0$, it follows that for any $\beta>0$
\[
\limsup_{\nu\to 0}\iint_K |\nabla \wnu|^2\leq \iint_K |\nabla p|^2+\limsup_{\nu\to 0} \iint_{K\setminus K_{\nu,\beta}} |\nabla f^*_{\nu}(\wnu)|^2.
\]
Finally, if we take $z''_{\nu,\beta}:=\chi_{[0,\beta]}$, then by Proposition \ref{prop:w_gradient_control}
\[
\lim_{\beta\to 0} \limsup_{\nu\to 0} \iint_{K\setminus K_{\nu,\beta}} |\nabla f^*_{\nu}(\wnu)|^2\lesssim \lim_{\beta\to 0}  N(\psi) \sup_{b\in [0,B_p]} z'_{\nu,\beta}(b)=0,
\]
for any $\psi\in C_c^{\infty}([0,T]\times\RR^d)$ such that $\psi=1$ on $K$.  Thus
\[
\limsup_{\nu\to 0}\iint_K |\nabla \wnu|^2\leq \iint_K |\nabla p|^2,
\]
and we conclude.
\end{proof}

Proposition~\ref{prop:strong} and the weak convergence of density $\rhonu$ from Lemma~\ref{lem:compactness} re-affirm the conclusion of Proposition~\ref{prop:density_swap}, namely that $m=\rho\nabla p$ in \eqref{eq:m}. In particular, we can conclude that $\rho$ solves
the continuity equation
$$
\partial_t \rho - \nabla\cdot(\rho\nabla p) = \rho G(p),
$$
along with the density-pressure coupling condition $p\in \partial f_0(\rho)$.  Since solutions to the above equations are unique \cite{PQV}, it follows that every convergent subsequence must converge to the same solution.
This concludes the proof of Theorem~\ref{theorem}.

\section*{Appendix}

 \appendix
 
\section{Multi-species cross-diffusion systems}
\label{app: sys}
In recent years models of the kind \eqref{eq: rho} that include more than one species of tissue cells (e.g. tumor cells, healthy issue, quiescient cells, ...) have attracted a lot of interest. In particular, several works have been focusing on how to prove the existence of weak solutions and study singular limits for coupled reaction-(nonlinear)-diffusion equations both of Darcy's and Brinkman's type \cite{CFSS, GPS, DEBIEC2020, DeSc}. Methods based on energy dissipation (in)equalities have been originally applied to this kind of tissue growth models expressly to deal with systems of coupled equations of the type \eqref{sys}, see \cite{KM18, LX2021, Jac21, Dav23, price2020global} for the incompressible limit and existence of weak solutions (for $\nu=0$), and \cite{DDMS, ES2023, DS2024} for the inviscid limit. Indeed, as it was first remarked in \cite{CFSS, GPS}, systems of coupled equations where the velocity field related to the total density-based pressure can exhibit sharp interfaces between the support of the two species. Due to the difficulty in establishing \textit{a priori} estimates which could lead to the strong compactness of the single species, major effort was employed to prove the strong compactness of the velocity field, hence the development of energy methods.

Here we claim that the main result of our paper also holds for systems of the kind 
\begin{equation}\label{sys}
 \left\{   \begin{array}{rl}
        \partial_t \rho^{(1)}_\nu\!\!\!&= \nabla \cdot (\rho^{(1)}_\nu \nabla \wnu) +\rho^{(1)}_\nu G^{(1)}(\pnu),\\[0.5em]
          \partial_t \rho^{(2)}_\nu\!\!\!&= \nabla \cdot (\rho^{(2)}_\nu \nabla \wnu) +\rho^{(2)}_\nu G^{(2)}(\pnu),
    \end{array}
    \right.
\end{equation}
with 
\begin{align*}
   -\nu \Delta \wnu + \wnu = \pnu, \qquad \pnu \in \partial f_\nu(\rhonu), \qquad \rhonu= \rhonu^{(1)}+\rhonu^{(2)}.
\end{align*}
We assume that both reaction terms $G^{(1)}, G^{(2)}$ satisfy Assumption~\ref{as: reaction}, while the sequence $\{f_\nu\}_\nu$ satisfies Assumption~\ref{as: energies}.
Then, solutions to \eqref{sys} converge to solution to
\begin{equation}\label{limit sys}
 \left\{   \begin{array}{rl}
        \partial_t \rho^{(1)}\!\!\!&= \nabla \cdot (\rho^{(1)} \nabla p) +\rho^{(1)} G^{(1)}(p),\\[0.5em]
          \partial_t \rho^{(2)}\!\!\!&= \nabla \cdot (\rho^{(2)} \nabla p) +\rho^{(2)} G^{(2)}(p),
    \end{array}
    \right.
\end{equation}
with 
$$p\in \partial f_0(\rho), \quad \rho=\rho^{(1)}+\rho^{(2)}.$$

\begin{assumption}\label{as: initial data sys}
    We consider a family of initial data $(\rhonu^{(i),\mathrm{in}})_{\nu>0}$ for $i=1,2,$ that are nonnegative, uniformly bounded in $L^1(\RR^d)\cap L^{\infty}(\RR^d)$. We assume that the initial data are {\it well-prepared}, namely that there exist $\rho^{(i),\mathrm{in}}\in L^1(\mathbb{R}^d)\cap L^\infty(\RR^d)$ such that  
  $\rho^{(i),\mathrm{in}}_\nu \rightharpoonup \rho^{(i),\mathrm{in}}$ weakly in $L^1(\mathbb{R}^d)$ for $i=1,2$ and there exists $B>0$ such that $\rho_{\nu}^{\mathrm{in}}\in \partial f_{\nu}^*(B)$
where $\rho^{\mathrm{in}}_\nu:=\rho^{(1),\mathrm{in}}_\nu+\rho^{(2),\mathrm{in}}_\nu$. 
\end{assumption}
  
\begin{theorem}\label{theorem sys}
    Let $\{\rho_\nu^{(i)}\}_{\nu> 0}$, $i=1,2,$ be a sequence of weak solutions to \eqref{sys} with the well-prepared initial data $\rhonu^{(i),\textup{in}}\in L^1(\RR^d)\cap L^{\infty}(\RR^d)$ as described above. Then the following holds for $f_{\nu}$ and $G^{(i)}$ satisfying Assumptions \ref{as: reaction}-\ref{as: energies}: there exists $\rho^{(i)}, p\in L^\infty(0,T;L^1(\RR^d)\cap L^\infty(\RR^d))$ such that, up to a subsequence
    \begin{align*}
         \rhonu^{(i)} &\rightharpoonup \rho^{(i)}, \text{ weakly in } L^2(0,T; L^2(\RR^d)),\\[0.5em]
         \pnu &\to p,\text{ strongly in } L^2(0,T; L^2_{\mathrm{loc}}(\RR^d)),\\[0.6em]
     \nabla \wnu &\to \nabla p, \text{ strongly in } L^2(0,T; L^2_{\mathrm{loc}}(\RR^d)),
    \end{align*}
 and $\rho^{(i)}, p$ is a solution to \eqref{limit sys} with initial data $\rho^{(i),\mathrm{in}}$.  
\end{theorem}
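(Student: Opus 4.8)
The plan is to follow the proof of Theorem~\ref{theorem} essentially verbatim, so I will only describe the modifications forced by the presence of two species. The starting observation is that, summing the two equations in \eqref{sys}, the total density $\rhonu=\rho_\nu^{(1)}+\rho_\nu^{(2)}$ solves
\[
\partial_t \rhonu-\nabla\cdot(\rhonu\nabla\wnu)=\rho_\nu^{(1)}G^{(1)}(\pnu)+\rho_\nu^{(2)}G^{(2)}(\pnu)=\rhonu\,\overline{G}_\nu,
\]
where $\overline{G}_\nu:=\rhonu^{-1}\big(\rho_\nu^{(1)}G^{(1)}(\pnu)+\rho_\nu^{(2)}G^{(2)}(\pnu)\big)$ on $\{\rhonu>0\}$ and $\overline{G}_\nu:=0$ otherwise. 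Since $0\le\rho_\nu^{(i)}\le\rhonu$ and each $G^{(i)}$ is continuous, $\overline{G}_\nu$ is a bounded function of $(t,x)$. The derivation of the energy evolution equation in Proposition~\ref{prop:eee} uses only the \emph{total}-density equation, written in Lagrangian coordinates along $-\nabla\wnu$, together with boundedness of the source: one checks that it holds verbatim with $G(\pnu)$ replaced by $\overline{G}_\nu$ in the growth term of \eqref{eq:eee}, and likewise for Corollaries~\ref{cor:dissipation}--\ref{cor:complementarity}. In particular the energy flow for $e_\nu(\rhonu)$ is unchanged and retains the same dissipation structure $\mathcal{L}_z$.

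First I would establish the uniform \emph{a priori} estimates. Set $B_p:=\max\big(p_H^{(1)},p_H^{(2)},B\big)$, where $p_H^{(i)}$ is the zero of $G^{(i)}$ from Assumption~\ref{as: reaction} and $B$ is from Assumption~\ref{as: initial data sys}. On $\{\pnu>B_p\}$ both $G^{(i)}(\pnu)\le 0$, hence $\overline{G}_\nu\le 0$ there, so the sign argument in the proof of Lemma~\ref{lem:p_rho_bounds} goes through and gives the uniform bounds $0\le\pnu\le B_p$, $0\le\rhonu\le\sup\partial f_\nu^*(B_p)$, and $\|\rhonu\|_{L^\infty([0,T];L^1(\RR^d))}\le e^{\max_iG^{(i)}(0)T}M(0)$. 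Corollary~\ref{cor:w_control} then gives the uniform $L^2_{\loc}$ bounds on $\nabla\wnu$ and $\sqrt\nu\,\Delta\wnu$, and Lemmas~\ref{lemma:w_bounds}--\ref{lem:nonlinear_weak} follow unchanged, with $B_p$ and $\overline{G}_\nu$ in place of their single-species analogues; in particular $\pnu-\wnu\to0$, $\wnu\rightharpoonup p$, $\pnu\to p$ in $L^q_{\loc}$, and $p\in\partial f_0(\rho)$. In addition, $0\le\rho_\nu^{(i)}\le\rhonu$ together with $\partial_t\rho_\nu^{(i)}=\nabla\cdot(\rho_\nu^{(i)}\nabla\wnu)+\rho_\nu^{(i)}G^{(i)}(\pnu)$ and $\rho_\nu^{(i)}\nabla\wnu\in L^2_{\loc}$ shows each $\rho_\nu^{(i)}$ is uniformly bounded in $H^1_{\loc}([0,\infty);H^{-1}_{\loc}(\RR^d))$, so along a subsequence $\rho_\nu^{(i)}\rightharpoonup\rho^{(i)}$ with $\rho^{(1)}+\rho^{(2)}=\rho$, upgraded by tightness of $\rhonu(t,\cdot)$ to weak convergence in $L^q_{\loc}([0,\infty);L^q(\RR^d))$ for every $q\in[1,\infty)$.

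Next I would reproduce Section~\ref{sec: strong comp} at the level of the total density. Propositions~\ref{prop:w_gradient_control} and \ref{prop:density_swap} use only the EEE and the a priori bounds, hence hold verbatim, giving $\big(\rhonu-f_\nu^{*\,\prime}(\wnu)\big)\nabla\wnu\to0$ in $L^1_{\loc}$ and therefore $m=\rho\nabla p$ for the total-density flux, so that $(\rho,p)$ solves $\partial_t\rho-\nabla\cdot(\rho\nabla p)=R$ with $R$ the weak limit of $\rhonu\overline{G}_\nu$. For the $H^{-1}$ step, Corollary~\ref{cor:h-1} holds with $\rhonu\overline{G}_\nu$ in place of $\rhonu G(\pnu)$, the limit identity \eqref{eq 1} holds for $(\rho,p)$ with this $R$, and the upper semicontinuity \eqref{eq101} follows by applying Lemma~\ref{lem:growth_weak_limit} separately to each term $\rho_\nu^{(i)}G^{(i)}(\pnu)$, whose proof uses only monotonicity of the individual reaction. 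Thus Propositions~\ref{prop:nablafstar} and \ref{prop:strong} go through, yielding $\wnu\to p$ strongly in $L^2_{\loc}((0,\infty);H^1_{\loc}(\RR^d))$ and $\pnu\to p$ strongly in $L^2_{\loc}$. With this, the per-species limit is immediate: $\rho_\nu^{(i)}\rightharpoonup\rho^{(i)}$ weakly in $L^2_{\loc}$ against $\nabla\wnu\to\nabla p$ strongly in $L^2_{\loc}$ gives $\rho_\nu^{(i)}\nabla\wnu\rightharpoonup\rho^{(i)}\nabla p$, and $\pnu\to p$ with continuity of $G^{(i)}$ gives $\rho_\nu^{(i)}G^{(i)}(\pnu)\rightharpoonup\rho^{(i)}G^{(i)}(p)$; passing to the limit in the weak formulation of each equation of \eqref{sys} produces \eqref{limit sys} with $p\in\partial f_0(\rho)$, $\rho=\rho^{(1)}+\rho^{(2)}$, the initial data being handled by Assumption~\ref{as: initial data sys}.

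The point I expect to be essential---and the reason energy methods are needed for systems---is that one never obtains any compactness of the individual $\rho_\nu^{(i)}$; it is the strong compactness of the velocity field $\nabla\wnu$, extracted purely from the total-density energy evolution equations, that makes the passage to the limit in the nonlinear fluxes $\rho_\nu^{(i)}\nabla\wnu$ possible. Consequently the only genuinely new bookkeeping is to verify that the effective reaction $\overline{G}_\nu$ is bounded, has nonpositive sign on $\{\pnu>B_p\}$, and enters the $H^{-1}$ dissipation step compatibly with Lemma~\ref{lem:growth_weak_limit}; all three are immediate from Assumption~\ref{as: reaction} applied to $G^{(1)}$ and $G^{(2)}$. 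I do not expect any serious obstacle beyond carefully tracking these three points through the single-species argument.
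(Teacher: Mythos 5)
Your proposal is correct and follows essentially the same route as the paper's own (sketched) argument: work at the level of the total density, observe that the energy evolution equation and its dissipation structure survive with the reaction term replaced by $\rhonu^{(1)}G^{(1)}(\pnu)+\rhonu^{(2)}G^{(2)}(\pnu)$, and handle the only $G$-dependent step (the $H^{-1}$ upper-semicontinuity inequality) by applying Lemma~\ref{lem:growth_weak_limit} to each species separately. Your writeup is in fact somewhat more explicit than the paper's appendix on the modified a priori bounds and the final weak-strong passage to the limit in each species' equation, but the strategy is identical.
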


It is not among the purposes of this paper to give a fully detailed proof of this result. However, since the argument closely follows the one detailed in the previous sections, let us give a brief explanation to support our claim.

Let us recall that in the one-species case the entire argument lies on the energy evolution equation \eqref{eq:eee}. For system \eqref{sys}, let us then consider the equation on the sum $\rhonu=\rhonu^{(1)}+\rhonu^{(2)}$ 
 $$\partial_t \rho_\nu =\nabla \cdot (\rhonu \nabla \wnu) +\rhonu^{(1)} G^{(1)}(\pnu) +\rhonu^{(2)} G^{(2)}(\pnu),$$
where the only difference to \eqref{eq: rho} is the reaction term.
Therefore, the Energy Evolution Equation still holds, and it reads
\begin{equation}
\begin{split} 
 &\int_0^{\infty}\!\!\! \int_{\RR^d} {\psi \big(z_{\nu}'(\pnu
)-z_{\nu}'(\wnu)\big)\frac{\pnu-\wnu}{\nu}}+\psi z_{\nu}''(\wnu)|\nabla \wnu|^2  \\[0.3em]
&\quad+\int_0^{\infty}\!\!\! \int_{\RR^d} e_{\nu}(\rhonu)(\nabla \wnu\cdot \nabla \psi -\partial_t \psi)-z_{\nu}(\wnu)\Delta \psi \\[0.5em]
= &\int_{\RR^d} \psi e_{\nu}(\rhonu^{\mathrm{in}})+\int_0^{\infty}\!\!\! \int_{\RR^d}  \psi e'_{\nu}(\rhonu) (\rhonu^{(1)} G^{(1)}(\pnu) +\rhonu^{(2)} G^{(2)}(\pnu)).
\end{split}
\end{equation}
The dissipative structure of the equation remains the same, therefore, provided that one can show the boundedness of the reaction term, the same implications of the energy evolution equation for the one-species case, for instance Lemma~\ref{lemma:w_bounds}, still hold. 
Thus, we get to the same conclusion as at the beginning of Section~\ref{sec: strong comp}, and the limit equation is 
$$\partial_t \rho = \nabla \cdot m +R,$$
where $R$ is now the weak limit of $\rhonu^{(1)}G^{(1)}(\pnu) +\rhonu^{(2)} G^{(2)}(\pnu)$.  Again the difference in the source term does not affect the arguments of Propositions \ref{prop:w_gradient_control} and \ref{prop:density_swap}, hence, it follows that $m=\rho\nabla p$.

Finally, to complete the rest of the argument in Section 5, the only place where we need to use the specific structure of the growth term is in showing that
\begin{equation*}
   \limsup_{\nu\to 0} \int_0^T\!\!\! \int_{\RR^d} t (\rhonu^{(1)}G^{(1)}(\pnu) +\rhonu^{(2)} G^{(2)}(p_\nu)) f^*_{\nu}(\pnu)\leq \int_0^T\!\!\! \int_{\RR^d} t R f^*_{0}(p).
\end{equation*}
This is true thanks to Lemma~\ref{lem:growth_weak_limit} applied to each term individually.
Hence, we can simply follow the arguments of Propositions \ref{prop:nablafstar} and \ref{prop:strong} to deduce the strong convergence of $\nabla \wnu$ so we are done.

\section{Convexity Lemmas}\label{app:1}

Here we give the proofs of Lemma~\ref{lem:z_from_e} and Lemma~\ref{lem:e_from_z}. 

\begin{proof}[Proof of Lemma~\ref{lem:z_from_e}]
    Note that $f_{\nu}^{*\prime}$ is nonnegative, bounded on compact subsets of $[0,\infty)$, and well defined almost everywhere.  Thus, the product $f^{*\prime}_{\nu}(\beta)S'(\beta)$ is well defined and nonnegative almost everywhere. Hence, it follows that $z_{\nu}$ is convex with $z_{\nu}\in W^{2,1}_{\loc}([0,\infty))$.

    It remains to prove that for all $a\in \dom(f_{\nu})$ and $b\in \partial f_{\nu}(a)$ there exists $c\in \partial e_{\nu}(a)$ such that 
    \[
    ac-e_{\nu}(a)=z'_{\nu}(b).
    \]
Let us choose $c=S(b)$.  Note that $e_{\nu}$ is differentiable whenever $f_{\nu}$ is differentiable, and at any point of differentiability $a$, we have  $e_{\nu}'(a)=S(f'_{\nu}(a))$.  Since $f_{\nu}$ is differentiable almost everywhere on $\dom(f_{\nu})$, it follows that 
\[
e_{\nu}(a)= \int_0^a S(f_{\nu}'(\alpha))d\alpha.
\] 

Now we want to show that the integral for $e_{\nu}$ is related to the integral for $z'_{\nu}$.
Define $b_{-}:=\inf \partial f_{\nu}(a)$. 
Using the fact that $f_{\nu}'$ pushes the Lebesgue measure on $[0,a]$ to the measure valued second derivative of $f_{\nu}^*$ on $[0,b_{-}]$, we have
\[
e_{\nu}(a)=\int_0^{b_{-}}S(\beta) df_{\nu}^*(\beta),
\]
where we write $df_{\nu}^*(\beta)$ to denote the measure valued second derivative of $f_{\nu}^*$.  After integrating by parts, we find that 
\[
e_{\nu}(a)=aS(b_{-})-\int_0^{b_{-}}S'(\beta) f_{\nu}^{*\prime}(\beta).
\]
Therefore, 
\[
e_{\nu}(a)+z'_{\nu}(b)=aS(b_{-})+\int_{b_{-}}^b f^{*\prime}_{\nu}(\beta)S'(\beta)\, d\beta.
\]
Since $f_{\nu}^{*\prime}(\beta)=a$ almost everywhere on $[b_{-},b]$, it follows that
\[
e_{\nu}(a)+z_{\nu}'(b)=aS(b),
\]
which was the desired result.
\end{proof}

\begin{proof}[Proof of Lemma~\ref{lem:e_from_z}]

Since $f_{\nu}'$ is well defined almost everywhere on $\dom(f_{\nu})$ and $z'_{\nu}$ is continuous, the integrand in \eqref{eq:e_from_z} is well-defined almost everywhere and measurable.  Hence, $e_{\nu}$ is well defined. 

Next, we check that $e_{\nu}$ is right-continuous at $0$, namely that $\liminf_{a\to 0^+} e_{\nu}(a)=e_{\nu}(0)=0$.  $z_{\nu}$ is convex and continuously differentiable with $z'_{\nu}(0)=0$ and $f_{\nu}$ satisfies $\lim_{a\to 0^+}\frac{f_{\nu}(a)}{a}=0$. As a result, given $\epsilon>0$, there exists $\delta\in (0,a_1)$ such that 
\[
|z'(b)|<\epsilon \quad \textup{for all} \; b\in \partial f(a), \; a\in [0,\delta].
\]
 Therefore, for $a\in [0,\delta]$, we have 
\[
|e_{\nu}(a)|\leq  a\int_{\delta}^{a_1} \frac{|z'_{\nu}(f'_{\nu}(\alpha))|}{\alpha^2}\, d\alpha+ a\epsilon \int_{a}^{\delta} \frac{1}{\alpha^2}\, d\alpha\leq \frac{a}{\delta^2} \int_{\delta}^{a_1} |z'_{\nu}(f'_{\nu}(\alpha))|\, d\alpha+ \epsilon. 
\]
Hence, $\liminf_{a\to 0^+} e_{\nu}(a)=0$.

Now let us show that $e_{\nu}$ is convex. Fix some $\delta>0$ and define $Z_{\delta}:\dom(f_{\nu})\to \RR$ by taking
\[
Z_{\delta}(a)=\essinf_{\alpha\in \dom(f_{\nu})} z'_{\nu}(f'_{\nu}(\alpha))+\frac{1}{2\delta}|a-\alpha|^2.
\]
It then follows that $Z_{\delta}$ is increasing and Lipschitz. Let 
\[
e_{\nu,\delta}(a)=a\int_{a_1}^a \frac{Z_{\delta}(\alpha)}{\alpha^2}d\alpha.
\]
Differentiating, and then integrating by parts, we see that 
\[
e_{\nu,\delta}'(a)=\frac{Z_{\delta}(a)}{a}+\int_{a_1}^a \frac{Z_{\delta}(\alpha)}{\alpha^2}d\alpha=\frac{Z_{\delta}(a_1)}{a_1}+\int_{a_1}^a \frac{Z_{\delta}'(\alpha)}{\alpha}d\alpha,
\]
and it is now clear that $e_{\nu,\delta}'$ is increasing and hence $e_{\nu,\delta}$ is convex.   Finally, since
\[
e_{\nu}(a)=\sup_{\delta>0} e_{\nu,\delta}(a),
\]
$e_{\nu}$ is convex.

Now let us show that $e_{\nu}$ satisfies the identity \eqref{eq:abc_equation}. If $0\in \dom(e_{\nu})$, then we can choose any $c\in \partial e_{\nu}(0)$ and the identity will hold, since $\partial f_{\nu}(0)=\{0\}$ and $z'(0)=0=c\cdot 0-e_{\nu}(0)$.   Otherwise, given $a\in \dom(e_{\nu})-\{0\}$ and $b\in \partial f_{\nu}(a)$, choose $c:=\frac{z'_{\nu}(b)}{a}+\frac{e_{\nu}(a)}{a}$.  Clearly with this choice, $ac-e_{\nu}(a)=z'_{\nu}(b)$, thus, we only need to check that $c\in \partial e_{\nu}(a)$. Since $e_{\nu}$ is continuous at 0, this amounts to showing that for any $\alpha\in \dom(f_{\nu})-\{0\}$, 
\[
e_{\nu}(\alpha)\geq e_{\nu}(a)+(\alpha-a)c.
\]
Multiplying both sides of the equation by $a$ and rearranging, we need 
\[
a e_{\nu}(\alpha)-\alpha e_{\nu}(a)\geq (\alpha-a)z'_{\nu}(b).
\]
Note that
\[
a e_{\nu}(\alpha)-\alpha e_{\nu}(a)=a \alpha \int_{a}^{\alpha} \frac{z'_{\nu}(f'_{\nu}(\alpha))}{\alpha^2}\, d\alpha.
\]
Since $z'$ and $f'$ are increasing, if $\alpha\geq a$, then 
   \[
a e_{\nu}(\alpha)-\alpha e_{\nu}(a)=a \alpha \int_{a}^{\alpha} \frac{z'_{\nu}(f'_{\nu}(\alpha))}{\alpha^2}d\alpha \geq a\alpha z'_{\nu}(b)\int_{a}^{\alpha} \frac{1}{\alpha^2}d\alpha=(\alpha-a)z'_{\nu}(b).
\]
The same logic also produces the case $\alpha\leq a$.

Finally, it remains to argue that $e_{\nu}$ is unique up to a linear factor.  Suppose that $e_{\nu}$ and $\tilde{e}_{\nu}$ are convex functions such that $e_{\nu}(0)=0=\tilde{e}_{\nu}(0)$ and both satisfy the coupling relation \eqref{eq:abc_equation} with $z_{\nu}$.  Choose a value $a>0$ where $e_{\nu}$ and $\tilde{e}_{\nu}$ are differentiable.  From the coupling relation, it then follows that
\[
a(e_{\nu}'(a)-\tilde{e}_{\nu}'(a))=e_{\nu}(a)-\tilde{e}_{\nu}(a).
\]
If we define $H(a):=e_{\nu}(a)-\tilde{e}_{\nu}(a)$, then at every point $a$ where $e_{\nu}, \tilde{e}_{\nu}$ are differentiable, we have
\[
H'(a)-\frac{1}{a}H(a)=0.
\]
Since $H$ is locally Lipschitz, this implies that there exists $\lambda\in \RR$ such that $H(a)=\lambda a$ for all $a\in \dom(f_{\nu})$.
\end{proof}

\section{Powers of the density and entropy dissipation}\label{app: eee powers}

We now show two other applications of the energy evolution equation \eqref{eq:eee} which provide the dissipation of standard quantities such as powers of the density and the Boltzmann entropy.
 
\begin{cor}[Powers of $\rhonu$ and entropy dissipation]\label{cor:powers}
If $f_{\nu}^*\in W^{2,1}(\RR)$, then for any $m>1$
\begin{equation}
    \label{eq:edi_power}
    \begin{split}
&\int_{\RR^d} \frac{\rhonu^{m}(T)-\rhonu(T)}{m-1} + \int_0^T \!\!\!\int_{\RR^d} m f_\nu^{*\prime}(\wnu)^{m-1} f^{*\prime\prime}_{\nu}(\wnu) |\nabla \wnu|^2 +(f^{*\prime}_{\nu}(\pnu)^m-f^{*\prime}_{\nu}(\wnu)^m)\frac{\pnu-\wnu}{\nu}\\[0.3em] &= \int_0^T\!\!\! \int_{\RR^d} \big(\rhonu^m+\frac{\rhonu^{m}-\rhonu}{m-1}\big) G(\pnu)+\int_{\RR^d} \frac{(\rhonu^{\mathrm{in}})^m -\rhonu^{\mathrm{in}}}{m-1}.
\end{split}
\end{equation}
Moreover, if $|x|^2 \rhonu^{\mathrm{in}}\in L^1(\RR^d)$, then we have
\begin{equation}
    \label{eq:edi entropy}
    \begin{split}
\int_{\RR^d} \rhonu(T) \ln \rhonu(T)-\rhonu(T)  &+ \int_0^T \!\!\!\int_{\RR^d} f^{*\prime\prime}_{\nu}(\wnu) |\nabla \wnu|^2 +(f^{*\prime}_\nu(\pnu)-f^{*\prime}_{\nu}(\wnu))\frac{\pnu-\wnu}{\nu}\\ = & \int_0^T\!\!\! \int_{\RR^d} \rhonu\ln \rhonu G(\pnu)+\int_{\RR^d} \rhonu^{\mathrm{in}} \ln \rhonu^{\mathrm{in}} -\rhonu^{\mathrm{in}} .
\end{split}
\end{equation}
\end{cor}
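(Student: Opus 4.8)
The plan is to obtain both identities as instances of the energy evolution machinery of Section~\ref{sec: eee}, by feeding Proposition~\ref{prop:eee} / Corollary~\ref{cor:dissipation} two carefully chosen pairs $(e_\nu,z_\nu)$. The hypothesis $f_\nu^*\in W^{2,1}(\RR)$ is used in two ways: it forces $f_\nu^*$ to be $C^1$, so that $\rhonu=f_\nu^{*\prime}(\pnu)$ and $f_\nu^{*\prime}$ is a genuine, locally bounded function, and it guarantees that any convex function built from $f_\nu^*$ by composition with a nice increasing map lies in $W^{2,1}_{\loc}([0,\infty))$, which is precisely the regularity required by Proposition~\ref{prop:eee}. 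A minor preliminary remark that is used below: Assumption~\ref{as: energies} together with $f_\nu^*\in C^1$ gives $f_\nu^*(0)=f_\nu^{*\prime}(0)=0$.

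\medskip

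\textit{Powers of the density.} I would take
\[
e_\nu(a):=\frac{a^m-a}{m-1},\qquad z_\nu'(b):=f_\nu^{*\prime}(b)^m.
\]
Since $b\mapsto b^m$ is increasing on $[0,\infty)$ and $f_\nu^{*\prime}$ is nondecreasing, $z_\nu'$ is nondecreasing, so $z_\nu$ is convex; moreover $z_\nu''=m f_\nu^{*\prime}(b)^{m-1}f_\nu^{*\prime\prime}(b)\in L^1_{\loc}([0,\infty))$ because $f_\nu^{*\prime}$ is locally bounded and $f_\nu^{*\prime\prime}\in L^1$, and $z_\nu(0)=z_\nu'(0)=0$. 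The coupling relation \eqref{eq:abc_equation} is immediate: on $(0,\infty)$, $e_\nu$ is differentiable with $a e_\nu'(a)-e_\nu(a)=a^m$, while for $b\in\partial f_\nu(a)$ the $C^1$-regularity of $f_\nu^*$ gives $a=f_\nu^{*\prime}(b)$, hence $a e_\nu'(a)-e_\nu(a)=f_\nu^{*\prime}(b)^m=z_\nu'(b)$ (the case $a=0$ being trivial since then $z_\nu'(b)=0$). Finally $e_\nu(a)/a=\tfrac{a^{m-1}-1}{m-1}\to-\tfrac{1}{m-1}$ as $a\to0^+$, so the hypothesis of Corollary~\ref{cor:dissipation} holds. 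Applying Corollary~\ref{cor:dissipation} with $\eta\equiv1$ and substituting $z_\nu'(\pnu)=\rhonu^m$, $z_\nu''(\wnu)=m f_\nu^{*\prime}(\wnu)^{m-1}f_\nu^{*\prime\prime}(\wnu)$, $z_\nu'(\wnu)=f_\nu^{*\prime}(\wnu)^m$, and $e_\nu(\rhonu)+z_\nu'(\pnu)=\tfrac{\rhonu^m-\rhonu}{m-1}+\rhonu^m$, yields \eqref{eq:edi_power}.

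\medskip

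\textit{Entropy.} Here I would take $e_\nu(a):=a\ln a-a$ and $z_\nu'(b):=f_\nu^{*\prime}(b)$, i.e.\ $z_\nu=f_\nu^*-f_\nu^*(0)$ (convex, in $W^{2,1}_{\loc}([0,\infty))$, with $z_\nu(0)=z_\nu'(0)=0$); the coupling relation follows from $a e_\nu'(a)-e_\nu(a)=a=f_\nu^{*\prime}(b)$. The genuinely new issue is that $e_\nu(a)/a=\ln a-1\to-\infty$ as $a\to0^+$, so Corollary~\ref{cor:dissipation} does not apply directly: one cannot conclude $e_\nu(\rhonu)\in L^1([0,T]\times\RR^d)$ from the $L^1\cap L^\infty$ bounds of Lemma~\ref{lem:p_rho_bounds} alone, which is what makes the limiting argument in that Corollary go through. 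This is where $|x|^2\rhonu^{\mathrm{in}}\in L^1(\RR^d)$ enters, and it is the main obstacle. I would first propagate the second moment: testing \eqref{eq: rho} against (a truncation of) $|x|^2$, integrating by parts, estimating $|\int x\cdot\nabla\wnu\,\rhonu|$ by Young's inequality against the energy-dissipation control on $\nabla\wnu$ from Corollary~\ref{cor:w_control} and the $L^\infty$ bound on $\rhonu$, and closing by Gronwall (the decreasing growth term only helps) gives $\sup_{t\in[0,T]}\int_{\RR^d}|x|^2\rhonu(t,x)\,\dx x<\infty$. Combined with $\rhonu\le M:=\|\rhonu\|_{L^\infty}$ and $\rhonu\in L^1$, an elementary splitting — on $\{\rhonu\ge1\}$ use $\rhonu\ln\rhonu\le\rhonu\ln M$; on $\{0<\rhonu<1\}$ use $-\rhonu\ln\rhonu\le\rhonu|x|^2$ where $\rhonu\ge e^{-|x|^2}$, and $-\rhonu\ln\rhonu\le|x|^2e^{-|x|^2}$ where $\rhonu<e^{-|x|^2}$ and $|x|\ge1$ (a bounded contribution near the origin absorbing the remaining piece) — shows $\rhonu\ln\rhonu\in L^\infty([0,T];L^1(\RR^d))$, hence $e_\nu(\rhonu)\in L^1([0,T]\times\RR^d)$. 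With this integrability in hand I would then run the limiting argument of the proof of Corollary~\ref{cor:dissipation} verbatim: insert $\psi_k(t,x)=\varphi(|x|/k)\zeta(t)$ into \eqref{eq:eee}, pass to the limit $k\to\infty$ (dominated convergence for the $e_\nu(\rhonu)$, $z_\nu(\pnu)$, $z_\nu(\wnu)$ terms; monotone convergence for the nonnegative dissipation terms), then let $\zeta\uparrow\eta\omega_\tau$ and $\tau\to0$ to recover the $T$-endpoint form. Substituting $z_\nu'(\pnu)=\rhonu$, $z_\nu''(\wnu)=f_\nu^{*\prime\prime}(\wnu)$, $z_\nu'(\wnu)=f_\nu^{*\prime}(\wnu)$, and $e_\nu(\rhonu)+z_\nu'(\pnu)=\rhonu\ln\rhonu$ gives exactly \eqref{eq:edi entropy}.
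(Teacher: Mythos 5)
Your treatment of \eqref{eq:edi_power} is exactly the paper's: the pair $e_{\nu}(a)=\frac{a^m-a}{m-1}$, $z_{\nu}'(b)=f_{\nu}^{*\prime}(b)^m$ is the one used there, the coupling and regularity checks are the intended ones, and the identity drops out of Corollary~\ref{cor:dissipation} with $\eta\equiv 1$. For \eqref{eq:edi entropy} you take a genuinely different route. The paper does \emph{not} re-run the cutoff argument with $e_{\nu}(a)=a\ln a-a$; instead it obtains the entropy identity by sending $m\to 1$ in \eqref{eq:edi_power}, using the propagated second moment to get $\rhonu\ln\rhonu\in L^{\infty}([0,T];L^1(\RR^d))$ and then dominated convergence (the quotients $\frac{\rhonu^m-\rhonu}{m-1}$ are dominated by $C\,\rhonu(1+|\ln\rhonu|)$ for $\rhonu\leq\norm{\rhonu}_{L^\infty}$), plus a final linear shift of the entropy. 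Your direct approach instead isolates precisely where the hypothesis $\liminf_{a\to 0}e_{\nu}(a)/a\neq-\infty$ enters the proof of Corollary~\ref{cor:dissipation} --- namely, only to guarantee $e_{\nu}(\rhonu)\in L^1([0,T]\times\RR^d)$ --- supplies that integrability from the second moment by the standard $\{\rhonu\geq e^{-|x|^2}\}$ splitting, and then repeats the exhaustion/truncation limits verbatim. Both routes rest on the same two inputs (second-moment propagation and the $a\ln a$ coupling structure); yours avoids the $m\to 1$ interchange of limits inside the dissipation terms (e.g.\ the pointwise limit of $mf_{\nu}^{*\prime}(\wnu)^{m-1}f_{\nu}^{*\prime\prime}(\wnu)|\nabla\wnu|^2$ on the set where $f_{\nu}^{*\prime}(\wnu)=0$, which the paper glosses over), at the cost of having to re-justify the $k\to\infty$ and $\tau\to 0$ limits of Corollary~\ref{cor:dissipation} under the weaker integrability hypothesis --- which you do correctly, since $z_{\nu}'(\pnu)=\rhonu\in L^1$ and $z_{\nu}(\pnu)\leq\rhonu\pnu\in L^1$ make the remaining terms unproblematic. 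I consider both arguments sound; one small caveat is that your second-moment Gronwall step needs a global-in-space bound on $\int_0^T\!\!\int_{\RR^d}\rhonu|\nabla\wnu|^2$, which follows from Corollary~\ref{cor:w_control} and the $L^\infty$ bound on $\rhonu$, but this is the same level of detail at which the paper itself leaves that computation.
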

\begin{proof}
  First choose some $m>1$ and set $e_{\nu,m}(a)=\frac{a^{m}-a}{m-1}$ and $z'_{\nu,m}(b)=f_{\nu}^{*\prime}(b)^m$.  One can then check that $e_{\nu,m}, z_{\nu,m}$ are convex functions satisfying the coupling relation \eqref{eq:abc_equation} such that $\lim_{a\to 0}\frac{e_{\nu,m}(a)}{a}=-\frac{1}{m-1}$. Then, equation \eqref{eq:edi_power} now follows from \eqref{eq:edi_T} with $\eta=1$. 

To obtain \eqref{eq:edi entropy}, we want to send $m\to 1$ in \eqref{eq:edi_power}. It is easy to check that the uniform bound on the second moment of $\rhonu$ is propagated through time by computing $\frac{\mathrm{d}}{\dx t}\int |x|^2\rho_\nu$. Therefore, $\rhonu \ln \rhonu$ is bounded in $L^\infty([0,T]; L^1(\RR^d))$.  Hence, we can pass to the limit $m\to 1$ by dominated convergence.   After taking the limit, the precise form of \eqref{eq:edi entropy} follows from modifying the entropy by a linear factor from $\rhonu\ln(\rhonu)$ to $\rhonu\ln(\rhonu)-\rhonu$, which as we noted earlier does not affect the evolution equation.
\end{proof}
 
We point out that Corollary~\ref{cor:powers} includes the energy and entropy inequalities found in \cite{ES2023}  and 
\cite{DDMS}  with $f^{\star\prime}_\nu (b)= b^{1/m}$, for $m>1$ and $m=1$, respectively.

\bibliographystyle{abbrv}
\bibliography{literature}

\end{document}